\def\a{\alpha}
\def\b{\beta}
\def\d{\delta}
\def\g{\gamma}
\def\G{\Gamma}
\def\s{\sigma}
\def\S{\Sigma}
\def\z{\zeta}
\def\ve{\varepsilon}
\def\wt{\widetilde}
\def\bb{\mathbb}
\numberwithin{equation}{section}
\newtheorem{rem}{\qquad REMARK}
\newtheorem{lem}{\qquad LEMMA}
\newtheorem{thm}{\qquad THEOREM}
\newtheorem{prop}{\qquad PROPOSITION}
\DeclareMathOperator*\Res{Res}
\DeclareMathOperator\Ai{Ai}
\DeclareMathOperator\Bi{Bi}
\DeclareMathOperator\re{Re}
\DeclareMathOperator\im{Im}
\DeclareMathOperator\OR{\ or\ }
\DeclareMathOperator\AND{\ and\ }
\begin{document}
\title{Uniform Asymptotics of the Meixner Polynomials}

\author{by \\ \\ X. S. Wang${}^{1,2,3}$ and R. Wong${}^2$}
\date{}

\maketitle

\begin{abstract}
Using the steepest descent method of Deift-Zhou,
we derive locally uniform asymptotic formulas for the Meixner polynomials.
These include an asymptotic formula in a neighborhood of the origin,
a result which as far as we are aware has not yet been obtained previously.
This particular formula involves a special function,
which is the uniformly bounded solution to a scalar Riemann-Hilbert problem,
and which is asymptotically (as the polynomial degree $n$ tends to infinity) equal to the constant $``1"$
except at the origin.
Numerical computation by using our formulas, and comparison with earlier results, are also given.
\end{abstract}
\vfill

\hrule width 6cm
\bigskip
\noindent  {\it Mathematics Subject Classification 2000} : Primary
41A60. Secondary  33C45.

\noindent  {\it Key Words}: Meixner polynomials; Uniform asymptotics;
Riemann-Hilbert problem; Airy function. \\
\ \ \ \hspace*{4mm}

\begin{enumerate}[${}^1$]

\item Department of Mathematics, University of Science and
Technology of China, Hefei, Anhui, China.

\item Department of Mathematics, City University of Hong Kong, Tat
Chee Avenue, Kowloon, Hong Kong.

\item Joint Advanced Research Center, University of Science and
Technology of China-City University
  of Hong Kong, Suzhou, Jiangshu,
   China.

\end{enumerate}

\newpage

\section{Introduction}

For $\b>0$ and $0<c<1$, the Meixner polynomials are explicitly given by
\cite[(1.9.1)]{KS98}
\begin{eqnarray}\label{Meixner}
  M_n(z;\b,c)={}_2F_1\bigg(\begin{array}{c}
    -n,-z\\ \b
  \end{array}
  \bigg|1-\frac 1 c\bigg)=\sum_{k=0}^n\frac{(-n)_k(-z)_k}{(\b)_kk!}(1-\frac 1 c)^k.
\end{eqnarray}
They satisfy the discrete orthogonality condition \cite[(1.9.2)]{KS98}
\begin{eqnarray}\label{orthogonality}
  \sum_{k=0}^\infty\frac{(\b)_k}{k!}c^k M_m(k;\b,c)M_n(k;\b,c)=\frac{c^{-n}n!}{(\b)_n(1-c)^\b}\d_{mn},
\end{eqnarray}
and the recurrence relation \cite[(1.9.3)]{KS98}
\begin{eqnarray}\label{recurrence relation}
  zM_n(z)=\frac{c(n+\b)}{c-1}M_{n+1}(z)-\frac{n+(n+\b)c}{c-1}M_n(z)+\frac n{c-1}M_{n-1}(z).
\end{eqnarray}

Not much is known about the asymptotic behavior of the Meixner polynomials for large values of $n$.
Using probabilistic arguments, Maejima and Van Assche \cite{MV85} have given an asymptotic formula
for $M_n(n\a;\b,c)$ when $\a<0$ and $\b$ is a positive integer.
Their result is in terms of elementary functions.
In \cite{JW98}, Jin and Wong have used the steepest-descent method for integrals
to derive two infinite asymptotic expansions for $M_n(n\a;\b,c)$. 
One holds uniformly for $0<\ve\le \a\le 1+\ve$, and the other holds uniformly for $1-\ve\le \a\le M<\infty$;
both expansions involve the parabolic cylinder function and its derivative.

In view of Gauss's contiguous relations for hypergeometric functions
(\cite[Section 15.2]{AS70}), we may restrict our study to the case $1\le\b<2$.
Fixing any $0<c<1$ and $1\le\b<2$, we intend to investigate the large-$n$ behavior of
$M_n(nz-\b/2;\b,c)$ for $z$ in the whole complex plane.
Our approach is based on the steepest descent method for oscillatory Riemann-Hilbert problems,
first introduced by P. Deift and X. Zhou \cite{DZ93} for nonlinear partial differential equations,
and later developed in \cite{DKMVZ99} and \cite{BKMM03,BKMM07} for orthogonal polynomials
with respect to exponential weights or a general class of discrete weights.

The material in this paper is arranged as follows.
In Section 2, we study the basic interpolation problem whose solution $P(z)$ can be solved
explicitly in terms of the Meixner polynomials and their Cauchy tranformations.
In Section 3, we make the first transformation $P\to Q$ which includes a rescaling.
The second transformation $Q\to R$ is introduced in Section 4,
which removes the poles in the interpolation problem.
As a consequence, we have created several jump discontinuities across certain contours in the complex plane.
In Section 5, we derive the equilibrium measure corresponding to the Meixner polynomials.
This measure is used in the third transformation $R\to S$ introduced in Section 6.
In Section 7, we give the final transformation $S\to T$ in connection with the factorization of the jump matrices and the deformation of the contours.
In Section 8, we construct the parametrix $T_{par}(z)$ which is an approximate solution to the Riemann-Hilbert problem for $T(z)$.
Our main theorem is stated in Section 9.
In Section 10, we provide some numerical evidence to demonstrate the accuracy of our results.
In Section 11, we compare our formulas with those which already exist in the literature.

\section{The Basic Interpolation Problem}

From (\ref{Meixner}), the monic Meixner polynomials are given by
\begin{eqnarray}\label{monic Meixner}
  \pi_n(z):=(\b)_n(1-\frac 1 c)^{-n}M_n(z;\b,c).
\end{eqnarray}
On account of (\ref{recurrence relation}), we obtain the recurrence relation
\begin{eqnarray}\label{monic recurrence relation}
  z\pi_n(z)=\pi_{n+1}(z)+\frac{n+(n+\b)c}{1-c}\pi_n(z)+\frac {n(n+\b-1)c}{(1-c)^2}\pi_{n-1}(z).
\end{eqnarray}
The orthogonality property of $\pi_n(z)$ can be derived from (\ref{orthogonality}), and we have
\begin{eqnarray}\label{monic orthogonality}
  \sum_{k=0}^\infty \pi_m(k)\pi_n(k)w(k)=\d_{mn}/\g_n^2,
\end{eqnarray}
where
\begin{eqnarray}\label{gamma}
  \g_n^2=\frac{(1-c)^{2n+\b}c^{-n}}{\G(n+\b)\G(n+1)}
\end{eqnarray}
and
\begin{eqnarray}\label{weight}
  w(z):=\frac{\G(z+\b)}{\G(z+1)}c^z.
\end{eqnarray}
Let $P(z)$ be the $2\times2$ matrix defined by
\begin{eqnarray}\label{P}
  P(z):=\left(\begin{matrix}
    \pi_n(z)&\sum\limits_{k=0}^\infty\cfrac{\pi_n(k)w(k)}{z-k}\\
    \\
    \g_{n-1}^2\pi_{n-1}(z)&\sum\limits_{k=0}^\infty\cfrac{\g_{n-1}^2\pi_{n-1}(k)w(k)}{z-k}
  \end{matrix}\right).
\end{eqnarray}
  For consistency, we shall use capital letters to denote matrix-valued functions that depend on the large parameter $n$.
  Therefore, all the matrices $P, Q, R, S, T, M$ and $K$ depend on both $z$ and $n$.
The following proposition states that $P(z)$ is the unique solution to an interpolation problem,
which is the discrete analogue of the Riemann-Hilbert problem corresponding to
the orthogonal polynomials with continuous weights; see \cite{FIK91,FIK92}.
\begin{prop}\label{prop-P}
The matrix-valued function $P(z)$ defined in (\ref{P}) is the unique solution of the
following interpolation problem:
\begin{enumerate}[(P1)]
\item  $P(z)$ is analytic in
  $\bb{C}\setminus\bb{N}$;
\item at each $z=k\in\bb{N}$, the first
column of
  $P(z)$ is analytic and the second column of $P(z)$ has a
  simple pole with residue
  \begin{eqnarray}\label{P2}
  \Res_{z=k}P(z)=\lim_{z\to k}P(z)\left(\begin{matrix}
    0&w(z)\\
    0&0
  \end{matrix}\right)=\left(\begin{matrix}
    0\ &w(k)P_{11}(k)\\
    \\
    0&w(k)P_{21}(k)
  \end{matrix}\right);
  \end{eqnarray}
\item for $z$ bounded away from $\bb{N}$,  $P(z)\left(\begin{matrix}
    z^{-n}&0\\
    0&z^n
  \end{matrix}\right)=I+O(|z|^{-1})$ as $z\to\infty$.
\end{enumerate}
\end{prop}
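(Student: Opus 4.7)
My plan is to verify the three conditions (P1)--(P3) directly from the explicit formula (\ref{P}) and then to deduce uniqueness by a determinant-and-Liouville argument.

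For (P1) and (P2) the key observation is that $w(k)=\G(k+\b)/\G(k+1)\,c^k\sim k^{\b-1}c^k$ decays exponentially (since $0<c<1$). Hence each Cauchy-type series
\[
\sum_{k=0}^\infty\frac{\pi_n(k)w(k)}{z-k}
\]
converges absolutely on $\bb{C}\setminus\bb{N}$ and has simple poles only at the nonnegative integers, with residue at $z=k$ equal to $\pi_n(k)w(k)$. Since $\pi_n$ and $\pi_{n-1}$ are entire, this gives (P1); and the residue identity (P2) is immediate because $\Res_{z=k}P_{12}=\pi_n(k)w(k)=w(k)P_{11}(k)$, with the analogous computation for $P_{22}$.

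The substantive step is (P3). The $(1,1)$ and $(2,1)$ entries of $P(z)\,\mathrm{diag}(z^{-n},z^n)$ behave as $1+O(1/z)$ and $\g_{n-1}^2/z+O(1/z^2)$ because $\pi_n$ and $\pi_{n-1}$ are monic of degrees $n$ and $n-1$. For the second column I would apply the identity
\[
\frac{1}{z-k}=\sum_{j=0}^{n}\frac{k^j}{z^{j+1}}+\frac{k^{n+1}}{z^{n+1}(z-k)},
\]
substitute into the sum, and invoke the orthogonality (\ref{monic orthogonality}): since $\pi_n$ is monic, the moments $\sum_k k^j\pi_n(k)w(k)$ vanish for $j=0,\dots,n-1$ and equal $1/\g_n^2$ for $j=n$, so
\[
\sum_{k=0}^\infty\frac{\pi_n(k)w(k)}{z-k}=\frac{1}{\g_n^2\,z^{n+1}}+\frac{1}{z^{n+1}}\sum_{k=0}^\infty\frac{k^{n+1}\pi_n(k)w(k)}{z-k},
\]
and a parallel identity for $\pi_{n-1}$ yields the $(2,2)$ entry. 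The only delicate point is uniform control of the remainder for $z$ bounded away from $\bb{N}$; this is handled by splitting the $k$-sum at $k\sim|z|/2$ and using the exponential decay of $w$ (which also guarantees the moments are finite). This bookkeeping is the main place where the Meixner orthogonality actually enters, and I expect it to be the principal, though routine, obstacle.

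For uniqueness, let $P$ and $\t P$ both satisfy (P1)--(P3). A direct residue computation at each $z=k$ shows that $\det P$ has no pole, since the two relevant contributions $P_{11}(k)\Res(P_{22})$ and $\Res(P_{12})\,P_{21}(k)$ both equal $w(k)P_{11}(k)P_{21}(k)$; thus $\det P$ is entire and tends to $1$ at infinity by (P3), so $\det P\equiv 1$ by Liouville. Writing $\t P^{-1}=\mathrm{adj}(\t P)$ and performing the analogous residue cancellation, using (P2) for both $P$ and $\t P$, one verifies that $P\t P^{-1}$ is entire. Condition (P3) then gives $P\t P^{-1}=I+O(|z|^{-1})$ as $z\to\infty$, and a second application of Liouville forces $P=\t P$.
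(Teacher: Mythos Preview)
Your proof is correct and follows essentially the same approach as the paper: exponential decay of $w(k)$ for (P1)--(P2), a finite geometric expansion of $1/(z-k)$ combined with the orthogonality relations for (P3), and the standard determinant-plus-Liouville argument for uniqueness. The only cosmetic difference is that you expand one step further (to $j=n$ rather than $j=n-1$), which additionally identifies the leading $z^{-n-1}$ coefficient of $P_{12}$ as $1/\g_n^2$; otherwise the arguments coincide.
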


\begin{proof}
  Since $w(k)$ decays exponentially to zero as $k\to+\infty$,
  the summations in the second column of $P(z)$ in (\ref{P})
  are uniformly convergent for $z$ in any compact subset of $\bb{C}\setminus\bb{N}$.
  Therefore, (P1) is obvious.

  For each $k\in\bb{N}$, we have from (\ref{P}) $$\Res\limits_{z=k}P_{12}(z)=\pi_n(k)w(k)=P_{11}(k)w(k),$$
  and $$\Res\limits_{z=k}P_{22}(z)=\g_{n-1}^2\pi_{n-1}(k)w(k)=P_{21}(k)w(k).$$
  Thus, (P2) follows.

  To prove (P3) we only need to show that $P_{12}(z)z^n=O(|z|^{-1})$
  and $P_{22}(z)z^n=1+O(|z|^{-1})$ as $z\to\infty$ and for $z$ bounded away from $\bb{N}$.
  Using the following expansion
  $$\cfrac{1}{z-k}=\sum\limits_{i=0}^{n-1}\cfrac{k^i}{z^{i+1}}+\cfrac{1}{z^{n+1}}\cfrac{k^n}{1-k/z},$$
  we have $$P_{12}(z)z^n=\sum\limits_{i=0}^{n-1}z^{n-i-1}\sum\limits_{k=0}^\infty k^i\pi_n(k)w(k)
  +\cfrac 1 z\sum\limits_{k=0}^\infty \cfrac{k^n\pi_n(k)w(k)}{1-k/z}.$$
  The orthogonality property (\ref{monic orthogonality}) implies that
  $\sum\limits_{k=0}^\infty k^i\pi_n(k)w(k)=0$ for any $i=0,1,\cdots,n-1$. Thus, we obtain
  $$P_{12}(z)z^n=\cfrac 1 z\sum\limits_{k=0}^\infty \cfrac{k^n\pi_n(k)w(k)}{1-k/z}.$$
  Since $z$ is bounded away from $\bb{N}$, it is easily seen that
  the last sum is uniformly bounded.
  Hence, we have $P_{12}(z)z^n=O(|z|^{-1})$ as $z\to\infty$.
  On the other hand, we also have
  $$P_{22}(z)z^n=\sum\limits_{i=0}^{n-1}z^{n-i-1}\g_{n-1}^2\sum\limits_{k=0}^\infty k^i\pi_{n-1}(k)w(k)
  +\cfrac 1 z\sum\limits_{k=0}^\infty \cfrac{\g_{n-1}^2k^n\pi_{n-1}(k)w(k)}{1-k/z}.$$
  Again, using the orthogonality property (\ref{monic orthogonality}), we obtain
  $\sum\limits_{k=0}^\infty k^i\pi_{n-1}(k)w(k)=\d_{i,n-1}/\g_{n-1}^2$ for any $i=0,1,\cdots,n-1$.
  Thus, it is readily seen that $P_{22}(z)z^n=1+O(|z|^{-1})$ as $z\to\infty$ and for $z$ bounded away from $\bb{N}$.
  This ends our proof of (P3).

  The uniqueness of the solution follows from Liouville's theorem.
  Indeed, condition (\ref{P2}) implies that
  the residue of $\det P(z)$ at $k\in\bb{N}$ is zero.
  Thus, the determinant function $\det P(z)$ can be analytically continued to an entire function.
  Condition (P3), together with Liouville's theorem, implies that $\det P(z)=1$.
  Therefore, $P(z)$ is invertible in $\bb{C}\setminus\bb{N}$.
  Let $\wt P(z)$ be a second solution to the interpolation problem (P1)-(P3).
  It is easily seen that the residue of $P(z)\wt P^{-1}(z)$ at $k\in\bb{N}$ is zero.
  Hence, $P(z)\wt P^{-1}(z)$ can be extended to an entire function.
  Again, using condition (P3), we obtain from Liouville's theorem that $P(z)\wt P^{-1}(z)=I$.
  This establishes the uniqueness.
\end{proof}

\section{The First Transformation $P\to Q$}

The first transformation involves the following rescaling:
\begin{eqnarray}\label{U}
  U(z):=n^{-n\s_3}P(nz-\b/2)=\left(\begin{matrix}
    n^{-n}&0\\
    0&n^n
  \end{matrix}\right)P(nz-\b/2).
\end{eqnarray}
Here, $\s_3:=\left(\begin{matrix}
    1&0\\
    0&-1
  \end{matrix}\right)$ is a Pauli matrix.
In this paper, we will also make use of another Pauli matrix
$\s_1:=\left(\begin{matrix}
    0&1\\
    1&0
  \end{matrix}\right)$; see (\ref{Tpa}).
Let $\bb{X}$ denote the set defined by
\begin{eqnarray}\label{X}
  \bb{X}:=\{X_k\}_{k=0}^\infty,\indent \mbox{where}\ X_k:=\frac{k+\b/2}n.
\end{eqnarray}
The $X_k$'s are called {\it nodes}. Our first transformation is given by
\begin{eqnarray}\label{Q}
  Q(z)&:=&U(z)\bigg[\prod_{j=0}^{n-1}(z-X_j)\bigg]^{-\s_3}
  =n^{-n\s_3}P(nz-\b/2)\bigg[\prod_{j=0}^{n-1}(z-X_j)\bigg]^{-\s_3}\nonumber
  \\&=&\left(\begin{matrix}
    n^{-n}&0\\
    0&n^n
  \end{matrix}\right)P(nz-\b/2)\left(\begin{matrix}
    \prod\limits_{j=0}^{n-1}(z-X_j)^{-1}&0\\
    0&\prod\limits_{j=0}^{n-1}(z-X_j)
  \end{matrix}\right),
\end{eqnarray}
and the interpolation problem corresponding to $Q(z)$ is given below.
\begin{prop}\label{prop-Q}
The matrix-valued function $Q(z)$ defined in (\ref{Q}) is the unique solution of the
following interpolation problem:
\begin{enumerate}[(Q1)]
\item $Q(z)$ is analytic in
  $\mathbb{C}\setminus X$;
\item at each node $X_k$ with $k\in\mathbb{N}$ and $k\ge n$, the
  first column of
  $Q(z)$ is analytic and the second column of $Q(z)$ has a
  simple pole with residue
  \begin{eqnarray}\label{Q2a}
  \Res_{z=X_k}Q(z)=\lim_{z\to X_k}Q(z)\left(\begin{matrix}
    0&w(nz-\b/2)\prod\limits_{j=0}^{n-1}(z-X_j)^2\\
    0&0
  \end{matrix}\right);
  \end{eqnarray}
  at each node $X_k$ with $k\in\mathbb{N}$ and $k< n$, the
  second column of
  $Q(z)$ is analytic and the first column of $Q(z)$ has a
  simple pole with residue
  \begin{eqnarray}\label{Q2b}
  \Res_{z=X_k}Q(z)=\lim_{z\to X_k}Q(z)\left(\begin{matrix}
    0&0\\
    \cfrac{1}{w(nz-\b/2)}\prod\limits_{\substack{j=0\\j\ne k}}^{n-1}(z-X_j)^{-2}&0
  \end{matrix}\right);
  \end{eqnarray}
\item for $z$ bounded away from $\bb{X}$, $Q(z)=I+O(|z|^{-1})$ as $z\to\infty$.
\end{enumerate}
\end{prop}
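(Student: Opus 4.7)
The plan is to verify the three conditions (Q1)--(Q3) directly from the defining formula (\ref{Q}) together with the corresponding properties (P1)--(P3) of $P$, and then argue uniqueness by a standard Liouville argument. Throughout, I would set $\Delta(z):=\prod_{j=0}^{n-1}(z-X_j)$, so that $Q(z)=n^{-n\sigma_3}P(nz-\b/2)\Delta(z)^{-\sigma_3}$, and exploit the fact that the rescaling $y\mapsto nz-\b/2$ sends the poles of $P$ at $y=k\in\bb{N}$ to poles of $P(nz-\b/2)$ at $z=X_k$, with the residues picking up a factor of $1/n$.

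For (Q1), I would observe that $P(nz-\b/2)$ is meromorphic with poles at $\bb{X}$ in its second column only, while $\Delta(z)^{\pm\sigma_3}$ is meromorphic with zeros/poles only at $\{X_0,\dots,X_{n-1}\}\subset\bb{X}$; hence $Q$ is analytic off $\bb{X}$. For (Q2), I would split into two cases. When $k\ge n$, $\Delta(X_k)\ne 0$, so the first column inherits analyticity from the entire functions $P_{i1}$, and the residue in the second column is computed by pulling the finite factor $\Delta(X_k)^{-\sigma_3}$ through the residue operation, rewriting the result as $\lim_{z\to X_k} Q(z)\cdot\Delta(z)^{\sigma_3}\bigl(\begin{smallmatrix}0 & w(nz-\b/2)/n\\0 & 0\end{smallmatrix}\bigr)\Delta(X_k)^{-\sigma_3}$, and simplifying to the form (\ref{Q2a}). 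When $k<n$, $\Delta$ has a simple zero at $X_k$: the factor $\Delta^{-1}$ in the first column creates a simple pole (since $P_{i1}$ is entire), whereas the factor $\Delta$ in the second column cancels the pole of $P_{i2}$ and produces a value $Q_{i2}(X_k)$ proportional to $\tilde\Delta_k(X_k)w(k)P_{i1}(k)$ where $\tilde\Delta_k(z):=\prod_{j\ne k, j<n}(z-X_j)$. Inverting this relation expresses $\Res_{z=X_k}Q_{i1}$ as a multiple of $Q_{i2}(X_k)$ divided by $w(nz-\b/2)\tilde\Delta_k^2$, which is precisely the matrix identity in (\ref{Q2b}).

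For (Q3), I would use (P3): as $y\to\infty$ away from $\bb{N}$, $P(y)\bigl(\begin{smallmatrix}y^{-n}&0\\0&y^n\end{smallmatrix}\bigr)=I+O(|y|^{-1})$. Substituting $y=nz-\b/2$ gives $y^{\pm n}=(nz)^{\pm n}(1+O(1/z))$, and one checks that $\Delta(z)=z^n(1+O(1/z))$ has the same leading behavior as $(nz)^n/n^n$ up to lower order. Combining these with the conjugation by $n^{-n\sigma_3}$ and $\Delta(z)^{-\sigma_3}$ yields $Q(z)=I+O(|z|^{-1})$ uniformly for $z$ bounded away from $\bb{X}$. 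Finally, uniqueness follows exactly as for Proposition \ref{prop-P}: condition (\ref{Q2a})--(\ref{Q2b}) forces $\det Q$ to extend to an entire function (the residues are rank-one with zero trace product), and (Q3) then forces $\det Q\equiv 1$ via Liouville; applied to the ratio $Q\widetilde Q^{-1}$ of two solutions, the same argument gives $Q\widetilde Q^{-1}=I$.

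The main obstacle is keeping the bookkeeping straight in (Q2b): one must carefully track how the simple zero of $\Delta$ at $X_k$ simultaneously cancels the pole of $P(nz-\b/2)$ in the second column and creates a new pole in the first column, and then re-express the resulting residue as a matrix multiplication from the \emph{right} by the upper-triangular/lower-triangular factor prescribed in (\ref{Q2a})/(\ref{Q2b}). The subtlety is purely algebraic, not analytic, and once the factor of $1/n$ arising from the chain rule is correctly absorbed into the definitions, the rest is direct substitution.
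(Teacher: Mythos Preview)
Your proposal is correct and follows essentially the same route as the paper: both derive (Q1) and (Q3) directly from (P1) and (P3) via the definition (\ref{Q}), verify (Q2) by an entry-by-entry residue computation split into the cases $k\ge n$ and $k<n$, and invoke the same Liouville argument for uniqueness. Your explicit tracking of the chain-rule factor $1/n$ in $\Res_{z=X_k}P(nz-\b/2)$ is in fact more careful than the paper's write-up, which suppresses this factor in its displayed computations.
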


\begin{proof}
  On account of (\ref{Q}),
  (Q1) and (Q3) follow from (P1) and (P3), respectively.

  Also from (\ref{Q}), we have
  $$Q_{11}(z)=n^{-n}P_{11}(nz-\b/2)\prod\limits_{j=0}^{n-1}(z-X_j)^{-1}$$
  and
  $$Q_{12}(z)=n^{-n}P_{12}(nz-\b/2)\prod\limits_{j=0}^{n-1}(z-X_j).$$
  At each node $z=X_k$ with $k\in\bb{N}$ and $k\ge n$,
  it is easily seen from (P2) that
  $Q_{11}(z)$ is analytic and $Q_{12}(z)$ has a simple pole,
  where the residue can be calculated as follows:
  \begin{eqnarray*}
  \Res_{z=X_k}Q_{12}(z)&=&n^{-n}\Res_{z=X_k}P_{12}(nz-\b/2)\prod\limits_{j=0}^{n-1}(X_k-X_j)\\
  &=&n^{-n}w(nX_k-\b/2)P_{11}(nX_k-\b/2)\prod\limits_{j=0}^{n-1}(X_k-X_j)\\
  &=&Q_{11}(X_k)w(nX_k-\b/2)\prod\limits_{j=0}^{n-1}(X_k-X_j)^2.
  \end{eqnarray*}
  Similarly, one can show from (\ref{Q}) and (P2) that
  $Q_{21}(z)$ is analytic and $Q_{22}(z)$ has a simple pole at $X_k$, $k\ge n$,
  with residue
  $$\Res\limits_{z=X_k}Q_{22}(z)=Q_{21}(X_k)w(nX_k-\b/2)\prod\limits_{j=0}^{n-1}(X_k-X_j)^2.$$
  This proves the first half of (Q2).

  Now, we compute the singularities of $Q(z)$ at the nodes $X_k$ with $k\in\bb{N}$ and $k<n$.
  First, it is easily seen from (\ref{Q}) and (P2) that
  $Q_{12}(z)$ can be analytically continued to the node $z=X_k$ and
  \begin{eqnarray*}
  \lim_{z\to X_k}Q_{12}(z)&=&\lim_{z\to X_k}n^{-n}P_{12}(nz-\b/2)\prod\limits_{j=0}^{n-1}(z-X_j)\\
  &=&n^{-n}\Res_{z=X_k}P_{12}(nz-\b/2)\prod\limits_{\substack{j=0\\j\ne k}}^{n-1}(X_k-X_j)\\
  &=&n^{-n}w(nX_k-\b/2)P_{11}(nX_k-\b/2)\prod\limits_{\substack{j=0\\j\ne k}}^{n-1}(X_k-X_j).
  \end{eqnarray*}
  Furthermore, since $P_{11}(nz-\b/2)$ is analytic at $z=X_k$ by (P2),
  the function $Q_{11}(z)$ has a simple pole at $z=X_k$ and from the last equation we obtain
  \begin{eqnarray*}
  \Res_{z=X_k}Q_{11}(z)&=&n^{-n}P_{11}(nX_k-\b/2)\prod\limits_{\substack{j=0\\j\ne k}}^{n-1}(X_k-X_j)^{-1}\\
  &=&Q_{12}(X_k)w(nX_k-\b/2)^{-1}\prod\limits_{\substack{j=0\\j\ne k}}^{n-1}(X_k-X_j)^{-2}.
  \end{eqnarray*}
  Similarly, we see from (\ref{Q}) and (P2) that
  $Q_{22}(z)$ is analytic and $Q_{21}(z)$ has a simple pole
  with residue
  \begin{eqnarray*}
  \Res\limits_{z=X_k}Q_{21}(z)&=&n^{-n}P_{21}(nX_k-\b/2)\prod\limits_{\substack{j=0\\j\ne k}}^{n-1}(X_k-X_j)^{-1}\\
  &=&Q_{22}(X_k)w(nX_k-\b/2)^{-1}\prod\limits_{\substack{j=0\\j\ne k}}^{n-1}(X_k-X_j)^{-2}.
  \end{eqnarray*}
  This proves the second half of (Q2).

  As in the proof of Proposition \ref{prop-P}, the uniqueness again follows from Liouville's theorem.
\end{proof}

\section{The Second Transformation $Q\to R$}
The purpose of the second transformation is to remove the poles in the interpolation problem
for $Q(z)$. For any fixed $0<c<1$ and $1\le\b<2$,
let $\d_0>0$ be a small number that will be determined in Remark \ref{rem-d}.
Fix any $0<\d<\d_0$, and define (cf. Figure \ref{fig-Q2R})
\begin{subequations}\label{R}
  \begin{equation}
    R(z):=Q(z)\left(\begin{matrix}
    1&0\\
    a_{21}^{(\pm)}&1
  \end{matrix}\right)
  \end{equation}
  for $\re z\in(0,1)\AND \im z\in(0,\pm\d)$, and
  \begin{equation}
    R(z):=Q(z)\left(\begin{matrix}
    1&a_{12}^{(\pm)}\\
    0&1
  \end{matrix}\right)
  \end{equation}
  for $\re z\in(0,1)\AND \im z\in(0,\pm\d)$, and
  \begin{equation}
    R(z):=Q(z)
  \end{equation}
  for $\re z\notin[0,\infty)\OR \im z\notin[-\d,\d]$,
\end{subequations}
where
$$a_{12}^{(\pm)}:=-\cfrac{n\pi w(nz-\b/2)\prod\limits_{j=0}^{n-1}(z-X_j)^2}{e^{\mp i\pi (nz-\b/2)}\sin(n\pi z-\b\pi/2)},\indent
a_{21}^{(\pm)}:=-\cfrac{e^{\pm i\pi(nz-\b/2)}\sin(n\pi z-\b\pi/2)}{n\pi w(nz-\b/2)\prod\limits_{j=0}^{n-1}(z-X_j)^2}.$$
\begin{figure}[htp]
\centering
\includegraphics{Q2R.eps}
\caption{The transformation $Q\to R$ and the contour $\S_R$.}\label{fig-Q2R}
\end{figure}
\begin{lem}\label{lem-R}
  For each $k\in\bb{N}$, the singularity of $R(z)$ at the node $X_k=\frac{k+\b/2}{n}$ is removable, that is,
  $\Res\limits_{z=X_k}R(z)=0.$
\end{lem}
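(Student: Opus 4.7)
The plan is to verify directly at each node $X_k$ that the unit-triangular right-multiplication in (\ref{R}) produces a residue which exactly cancels the pole of $Q(z)$ prescribed in (Q2). Since the singular column of $Q$ differs depending on whether $k<n$ or $k\ge n$, we split into these two cases. The common key observation, driving both, is that $nX_k-\beta/2=k$, so
\[
\sin(n\pi z-\beta\pi/2)=(-1)^k\, n\pi(z-X_k)+O\!\bigl((z-X_k)^3\bigr),\qquad e^{\pm i\pi(nz-\beta/2)}\Big|_{z=X_k}=(-1)^k.
\]

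For $k<n$ we have $X_k\in(0,1)$, the relevant factor is the lower-triangular one with $a_{21}^{(\pm)}$, and $\prod_{j=0}^{n-1}(z-X_j)^2$ has a double zero at $X_k$ with leading coefficient $\prod_{j\ne k}(X_k-X_j)^2$. Substituting the above expansions into $a_{21}^{(\pm)}(z)$ and noting that the two factors of $(-1)^k$ square to $1$, we obtain
\[
\Res_{z=X_k}a_{21}^{(\pm)}(z)=-\frac{1}{w(nX_k-\beta/2)\prod_{j\ne k}(X_k-X_j)^2},
\]
independent of the sign $\pm$. By (Q2b), $Q_{\ell 2}(z)$ is analytic at $X_k$, so the pole of $Q_{\ell 2}(z)\,a_{21}^{(\pm)}(z)$ is simple with residue $Q_{\ell 2}(X_k)$ times the above, which is precisely $-\Res_{z=X_k}Q_{\ell 1}(z)$. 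Therefore $R_{\ell 1}=Q_{\ell 1}+Q_{\ell 2}a_{21}^{(\pm)}$ is regular at $X_k$ for $\ell=1,2$, while $R_{\ell 2}=Q_{\ell 2}$ is already analytic there by (Q2b).

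For $k\ge n$ the node $X_k$ is not one of $\{X_0,\ldots,X_{n-1}\}$, so $\prod_{j=0}^{n-1}(z-X_j)^2$ is nonzero at $X_k$, and the applied factor is the upper-triangular one with $a_{12}^{(\pm)}$. A parallel expansion gives
\[
\Res_{z=X_k}a_{12}^{(\pm)}(z)=-\,w(nX_k-\beta/2)\prod_{j=0}^{n-1}(X_k-X_j)^2,
\]
again independent of $\pm$. Combined with (Q2a), this shows that $Q_{\ell 1}(X_k)\,\Res_{z=X_k}a_{12}^{(\pm)}(z)=-\Res_{z=X_k}Q_{\ell 2}(z)$, so $R_{\ell 2}=Q_{\ell 1}a_{12}^{(\pm)}+Q_{\ell 2}$ has removable singularity at $X_k$. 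The only subtlety in the argument is the sign-independence noted in both residue computations; this is what makes the two definitions of $R$ in the strips $\im z\in(0,\pm\d)$ consistent, as both produce the same removable singularity at $X_k$. The remaining calculations are mechanical expansions in $z-X_k$.
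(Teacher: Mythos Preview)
Your proof is correct and follows essentially the same approach as the paper's own argument: split into the cases $k<n$ and $k\ge n$, identify which triangular factor in (\ref{R}) applies, compute the residue of $a_{21}^{(\pm)}$ (respectively $a_{12}^{(\pm)}$) at $X_k$, and check that it exactly cancels the residue of $Q$ prescribed by (Q2). Your use of the Taylor expansion of $\sin(n\pi z-\beta\pi/2)$ at $X_k$ and the observation that the two factors $(-1)^k$ combine to $1$ is precisely the content of the paper's residue calculations (\ref{lem-R-13}) and (\ref{R-l-23}); the only cosmetic difference is that the paper writes those residues down directly rather than via the sine expansion, and it explicitly notes that $X_k>1$ for $k\ge n$ (using $1\le\beta<2$) to justify that the upper-triangular factor is the relevant one.
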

\begin{proof}
  For any $k\in\bb{N}$ with $k\ge n$, we have $X_k=\frac{k+\b/2}{n}>1$ since $1\le\b<2$.
  For any complex $z$ with $\re z\in(1,\infty)$ and $\im z\in(0,\pm\d)$, we obtain from (\ref{R}) that $R_{11}(z)=Q_{11}(z)$ and
  \begin{eqnarray}\label{lem-R-11}
    R_{12}(z)=Q_{12}(z)-Q_{11}(z)
    \cfrac{n\pi w(nz-\b/2)\prod\limits_{j=0}^{n-1}(z-X_j)^2}{e^{\mp i\pi (nz-\b/2)}\sin(n\pi z-\b\pi/2)}.
  \end{eqnarray}
  The analyticity of the function $Q_{11}(z)$ at the node $X_k$ is clear from (Q2) in Proposition \ref{prop-Q}.
  Hence, the function $R_{11}(z)$ is analytic.
  To show that the singularity of the function $R_{12}(z)$ at the node $X_k$ is removable,
  we first note from (Q2) that
  \begin{eqnarray}\label{lem-R-12}
    \Res_{z=X_k}Q_{12}(z)=Q_{11}(X_k)w(nX_k-\b/2)\prod\limits_{j=0}^{n-1}(X_k-X_j)^2.
  \end{eqnarray}
  Furthermore, a direct calculation gives
  \begin{eqnarray}\label{lem-R-13}
    \Res_{z=X_k}\cfrac{n\pi w(nz-\b/2)\prod\limits_{j=0}^{n-1}(z-X_j)^2}{e^{\mp i\pi (nz-\b/2)}\sin(n\pi z-\b\pi/2)}
    =w(nX_k-\b/2)\prod\limits_{j=0}^{n-1}(X_k-X_j)^2.
  \end{eqnarray}
  Applying (\ref{lem-R-12}) and (\ref{lem-R-13}) to (\ref{lem-R-11}) yields $\Res\limits_{z=X_k}R_{12}(z)=0$.
  Similarly, we can prove that the functions $R_{21}(z)$ and $R_{22}(z)$ are analytic at the node $X_k$.

  Now, we consider the case $k\in\bb{N}$ with $k<n$.
  Since $1\le\b<2$, we have $X_k=\frac{k+\b/2}{n}<1$.
  For any $z$ with $\re z\in(0,1)$ and $\im z\in(0,\pm\d)$, we obtain from (\ref{R}) that $R_{12}(z)=Q_{12}(z)$ and
  \begin{eqnarray}\label{R-l-21}
    R_{11}(z)=Q_{11}(z)-Q_{12}(z)
    \cfrac{e^{\pm i\pi(nz-\b/2)}\sin(n\pi z-\b\pi/2)}{n\pi w(nz-\b/2)\prod\limits_{j=0}^{n-1}(z-X_j)^2}.
  \end{eqnarray}
  From (Q2) in Proposition \ref{prop-Q} we see that the function $Q_{12}(z)$,
  and hence the function $R_{12}(z)$, is analytic at the node $X_k$.
  Moreover, we have
  \begin{eqnarray}\label{R-l-22}
    \Res_{z=X_k}Q_{11}(z)=Q_{12}(X_k)\cfrac 1{w(nX_k-\b/2)}\prod\limits_{\substack{j=0\\j\ne k}}^{n-1}(X_k-X_j)^{-2}.
  \end{eqnarray}
  Since
  \begin{eqnarray}\label{R-l-23}
    \Res_{z=X_k}\cfrac{e^{\pm i\pi(nz-\b/2)}\sin(n\pi z-\b\pi/2)}{n\pi w(nz-\b/2)\prod\limits_{j=0}^{n-1}(z-X_j)^2}
    =\cfrac 1{w(nX_k-\b/2)}\prod\limits_{\substack{j=0\\j\ne k}}^{n-1}(X_k-X_j)^{-2},
  \end{eqnarray}
  we obtain from (\ref{R-l-21}) that $\Res\limits_{z=X_k}R_{11}(z)=0$.
  The analyticity of the second row in $R(z)$ at the node $X_k$ can be verified similarly.
  This ends our proof.
\end{proof}

From the definition of $R(z)$ in (\ref{R}) and the analyticity condition (Q1) of $Q(z)$ in Proposition \ref{prop-Q},
it is easily seen that $R(z)$ is analytic in $\mathbb{C}\setminus \S_R$,
where $\S_R$ is the oriented contour shown in Figure \ref{fig-Q2R}.
Denote by $R_+(z)$ the limiting value taken by $R(z)$ on $\S_R$ from the left and by $R_-(z)$
taken from the right.
We intend to calculate the jump matrix
$
J_R(z):=R_-(z)^{-1}R_+(z)
$
on the contour $\S_R$.
For convenience, we introduce the two functions
\begin{eqnarray}\label{v}
  v(z):=-z\log c
\end{eqnarray}
and
\begin{eqnarray}\label{W}
  W(z):=2in\pi w(nz-\b/2)e^{nv(z)}=\frac{2in\pi \G(nz+\b/2)c^{-\b/2}}{\G(nz+1-\b/2)}.
\end{eqnarray}
The jump conditions of $R(z)$ is given below.

\begin{prop}\label{prop-JoR}
On the contour $\S_R$, the jump matrix $J_R(z):=R_-(z)^{-1}R_+(z)$ has the following explicit expressions.
For $z=1+i\im z$ with $\im z\in(0,\pm\d)$, we have
\begin{eqnarray}\label{JoR1}
  J_R(z)=\left(\begin{matrix}
    1-e^{\pm2i\pi(nz-\b/2)}&\cfrac{W(z)e^{-nv(z)}\prod\limits_{j=0}^{n-1}(z-X_j)^2}
    {2i\sin(n\pi z-\b\pi/2)e^{\mp i\pi(nz-\b/2)}}\\
    \\
    -\cfrac{2i\sin(n\pi z-\b\pi/2)e^{\pm i\pi(nz-\b/2)}}{W(z)e^{-nv(z)}\prod\limits_{j=0}^{n-1}(z-X_j)^2}&1
  \end{matrix}\right).
\end{eqnarray}
On the positive real line, we have
\begin{subequations}\label{JoR2}
  \begin{equation}
    J_R(x)=\left(\begin{matrix}
    1&0\\
    \\
    \cfrac{4\sin^2(n\pi x-\b\pi/2)}{W(x)e^{-nv(x)}\prod\limits_{j=0}^{n-1}(x-X_j)^2}&1
  \end{matrix}\right)
  \end{equation}
  for $x\in(0,1)$, and
  \begin{equation}
    J_R(x)=\left(\begin{matrix}
    1&-W(x)e^{-nv(x)}\prod\limits_{j=0}^{n-1}(x-X_j)^2\\
    \\
    0&1
  \end{matrix}\right)
  \end{equation}
  for $x\in(1,\infty)$.
\end{subequations}
Furthermore, we have
\begin{subequations}\label{JoR3}
  \begin{equation}
    J_R(z)=\left(\begin{matrix}
    1&0\\
    \\
    \cfrac{2i\sin(n\pi z-\b\pi/2)e^{\pm i\pi(nz-\b/2)}}{W(z)e^{-nv(z)}\prod\limits_{j=0}^{n-1}(z-X_j)^2}&1
  \end{matrix}\right)
  \end{equation}
  for $z\in(0,\pm i\d)\cup(\pm i\d, 1\pm i\d)$, and
  \begin{equation}
    J_R(z)=\left(\begin{matrix}
    1&\cfrac{W(z)e^{-nv(z)}\prod\limits_{j=0}^{n-1}(z-X_j)^2}{2i\sin(n\pi z-\b\pi/2)e^{\mp i\pi(nz-\b/2)}}\\
    \\
    0&1
  \end{matrix}\right)
  \end{equation}
  for $z=\re z\pm i\d$ with $\re z\in(1,\infty)$.
\end{subequations}
\end{prop}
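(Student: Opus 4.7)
The plan is to compute $J_R(z) = R_-(z)^{-1} R_+(z)$ segment by segment on $\S_R$ by exploiting the piecewise definition (\ref{R}) and the cancellation of the nodal singularities of $Q$ provided by Lemma \ref{lem-R}. By condition (Q1) of Proposition \ref{prop-Q} together with Lemma \ref{lem-R}, in each of the open regions bounded by $\S_R$ the matrix $R$ is analytic and has the form $R(z) = Q(z) \cdot M$, where $M$ is either the identity or one of the triangular factors
\[
L^{(\pm)} := \begin{pmatrix} 1 & 0 \\ a_{21}^{(\pm)} & 1 \end{pmatrix}, \qquad U^{(\pm)} := \begin{pmatrix} 1 & a_{12}^{(\pm)} \\ 0 & 1 \end{pmatrix}.
\]
Since $Q$ is the same analytic function on both sides of any arc of $\S_R$ (its nodal poles, which lie on the real part of $\S_R$, are rendered harmless by Lemma \ref{lem-R}) and $\det Q \equiv 1$, the common factor of $Q$ cancels in $R_-^{-1} R_+$, and the jump reduces to the algebraic product $M_-^{-1} M_+$ of the relevant two triangular factors.

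Before performing the multiplications, I would recast $a_{12}^{(\pm)}$ and $a_{21}^{(\pm)}$ in a form matched to the right-hand sides of (\ref{JoR1})--(\ref{JoR3}). Writing $\phi := n\pi z - \b\pi/2$ and substituting $n\pi w(nz - \b/2) = W(z) e^{-nv(z)}/(2i)$ from (\ref{W}), one finds
\[
a_{12}^{(\pm)} = -\frac{W(z) e^{-nv(z)} \prod_{j=0}^{n-1}(z-X_j)^2}{2i\, e^{\mp i\phi} \sin\phi}, \qquad a_{21}^{(\pm)} = -\frac{2i\, e^{\pm i\phi} \sin\phi}{W(z) e^{-nv(z)} \prod_{j=0}^{n-1}(z-X_j)^2},
\]
and in particular $a_{12}^{(\pm)} a_{21}^{(\pm)} = e^{\pm 2i\phi}$, which is the key identity for the $(1,1)$-entry of the jump on the vertical segment $\re z = 1$. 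Each of the four cases is then routine. On $x \in (0,1)$ both sides use $L^{(\pm)}$, so $J_R = (L^{(-)})^{-1} L^{(+)}$ is lower triangular with $(2,1)$-entry $a_{21}^{(+)} - a_{21}^{(-)}$; the identity $e^{i\phi} - e^{-i\phi} = 2i\sin\phi$ collapses this to the $4\sin^2\phi$ numerator in the first formula of (\ref{JoR2}). The interval $(1, \infty)$ is handled identically with $U^{(\pm)}$ in place of $L^{(\pm)}$, yielding the second formula in (\ref{JoR2}). On the vertical piece $\re z = 1$ one side uses $L^{(\pm)}$ and the other $U^{(\pm)}$, so $J_R = (U^{(\pm)})^{-1} L^{(\pm)}$; the matrix product, combined with $1 - a_{12}^{(\pm)} a_{21}^{(\pm)} = 1 - e^{\pm 2i\phi}$, reproduces (\ref{JoR1}) exactly. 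Finally, on each outer arc $(0, \pm i\d)$, $(\pm i\d, 1 \pm i\d)$, or $\re z \pm i\d$ with $\re z > 1$, one side of $\S_R$ has $R = Q$, so $J_R$ is simply one of $L^{(\pm)}$, $(L^{(\pm)})^{-1}$, $U^{(\pm)}$, or $(U^{(\pm)})^{-1}$, matching (\ref{JoR3}) directly from the explicit forms above.

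The main obstacle is bookkeeping rather than mathematical depth: one must read off from Figure \ref{fig-Q2R} the orientation of each arc of $\S_R$, correctly identify which limit is $R_+$ and which is $R_-$, and carry out the $2 \times 2$ matrix multiplications without sign errors. Every simplification beyond that collapses to the single trigonometric identity $e^{i\phi} - e^{-i\phi} = 2i\sin\phi$ combined with the rewriting of $n\pi w(nz - \b/2)$ via (\ref{W}).
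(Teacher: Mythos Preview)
Your proposal is correct and follows essentially the same route as the paper's own proof: identify on each arc of $\Sigma_R$ the triangular factor $L^{(\pm)}$ or $U^{(\pm)}$ multiplying $Q$ on either side, cancel $Q$, and simplify the resulting $2\times 2$ product using $n\pi w(nz-\b/2)=W(z)e^{-nv(z)}/(2i)$ and the identity $e^{i\phi}-e^{-i\phi}=2i\sin\phi$; the paper likewise invokes Lemma~\ref{lem-R} only to extend the real-line formulas (\ref{JoR2}) across the nodal points $X_k$. Your upfront observation that $a_{12}^{(\pm)}a_{21}^{(\pm)}=e^{\pm 2i\phi}$ is a clean way to obtain the $(1,1)$-entry in (\ref{JoR1}), and the remaining bookkeeping of orientations is exactly what the paper does implicitly.
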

\begin{proof}
  For $z=1+i\im z$ with $\im z\in(0,\d)$, we obtain from (\ref{R}) and (\ref{W}) that
\begin{eqnarray*}
  R_+(z)=
    Q(z)\left(\begin{matrix}
    1&0\\
    \\
    -\cfrac{2ie^{i\pi(nz-\b/2)}\sin(n\pi z-\b\pi/2)}{W(z)e^{-nv(z)}\prod\limits_{j=0}^{n-1}(z-X_j)^2}&1
  \end{matrix}\right),
\end{eqnarray*}
and
\begin{eqnarray*}
  R_-(z)=
    Q(z)\left(\begin{matrix}
    1&-\cfrac{W(z)e^{-nv(z)}\prod\limits_{j=0}^{n-1}(z-X_j)^2}{2ie^{-i\pi (nz-\b/2)}\sin(n\pi z-\b\pi/2)}\\
    \\
    0&1
  \end{matrix}\right).
\end{eqnarray*}
Thus, we have
\begin{eqnarray*}
  J_R(z)&=&R_-(z)^{-1}R_+(z)
  \\&=&
    \left(\begin{matrix}
    1&\cfrac{W(z)e^{-nv(z)}\prod\limits_{j=0}^{n-1}(z-X_j)^2}{2ie^{-i\pi (nz-\b/2)}\sin(n\pi z-\b\pi/2)}\\
    \\
    0&1
  \end{matrix}\right)\left(\begin{matrix}
    1&0\\
    \\
    -\cfrac{2ie^{i\pi(nz-\b/2)}\sin(n\pi z-\b\pi/2)}{W(z)e^{-nv(z)}\prod\limits_{j=0}^{n-1}(z-X_j)^2}&1
  \end{matrix}\right)
  \\&=&
  \left(\begin{matrix}
    1-e^{2i\pi(nz-\b/2)}&\cfrac{W(z)e^{-nv(z)}\prod\limits_{j=0}^{n-1}(z-X_j)^2}{2ie^{-i\pi (nz-\b/2)}\sin(n\pi z-\b\pi/2)}\\
    \\
    -\cfrac{2ie^{i\pi(nz-\b/2)}\sin(n\pi z-\b\pi/2)}{W(z)e^{-nv(z)}\prod\limits_{j=0}^{n-1}(z-X_j)^2}&1
  \end{matrix}\right).
\end{eqnarray*}
Similarly, for $z\in 1-i(0,\d)$, we obtain from (\ref{R}) and (\ref{W}) that
\begin{eqnarray*}
  J_R(z)&=&
    \left(\begin{matrix}
    1&\cfrac{W(z)e^{-nv(z)}\prod\limits_{j=0}^{n-1}(z-X_j)^2}{2ie^{i\pi (nz-\b/2)}\sin(n\pi z-\b\pi/2)}\\
    \\
    0&1
  \end{matrix}\right)\left(\begin{matrix}
    1&0\\
    \\
    -\cfrac{2ie^{-i\pi(nz-\b/2)}\sin(n\pi z-\b\pi/2)}{W(z)e^{-nv(z)}\prod\limits_{j=0}^{n-1}(z-X_j)^2}&1
  \end{matrix}\right)
  \\&=&
  \left(\begin{matrix}
    1-e^{-2i\pi(nz-\b/2)}&\cfrac{W(z)e^{-nv(z)}\prod\limits_{j=0}^{n-1}(z-X_j)^2}{2ie^{i\pi (nz-\b/2)}\sin(n\pi z-\b\pi/2)}\\
    \\
    -\cfrac{2ie^{-i\pi(nz-\b/2)}\sin(n\pi z-\b\pi/2)}{W(z)e^{-nv(z)}\prod\limits_{j=0}^{n-1}(z-X_j)^2}&1
  \end{matrix}\right).
\end{eqnarray*}
Hence, the formula (\ref{JoR1}) is proved.

  For any $x>0$ with $x\notin \bb{X}$, we obtain from (\ref{R}) and (\ref{W}) that
\begin{eqnarray*}
  R_\pm(x)=
    Q(x)\left(\begin{matrix}
    1&0\\
    \\
    -\cfrac{2ie^{\pm i\pi(nx-\b/2)}\sin(n\pi x-\b\pi/2)}{W(x)e^{-nv(x)}\prod\limits_{j=0}^{n-1}(x-X_j)^2}&1
  \end{matrix}\right)
\end{eqnarray*}
for $x\in(0,1)$, and
\begin{eqnarray*}
  R_\pm(x)=Q(x)\left(\begin{matrix}
    1&-\cfrac{W(x)e^{-nv(x)}\prod\limits_{j=0}^{n-1}(x-X_j)^2}{2ie^{\mp i\pi (nx-\b/2)}\sin(n\pi x-\b\pi/2)}\\
    \\
    0&1
  \end{matrix}\right)
\end{eqnarray*}
for $x\in(1,\infty)$.
  A simple calculation gives (\ref{JoR2}).
  Since $R(z)$ has no singularity at $\bb{X}$ (Lemma \ref{lem-R}),
  formula (\ref{JoR2}) remains valid when $x\in \bb{X}$.

  Finally, coupling (\ref{R}) and (\ref{W}) yields (\ref{JoR3}) immediately.
  This completes our proof.
\end{proof}

\begin{prop}\label{prop-R}
The matrix-valued function $R(z)$ defined in (\ref{R}) is the unique solution of the
following Riemann-Hilbert problem:
\begin{enumerate}[(R1)]
\item $R(z)$ is analytic in
  $\mathbb{C}\setminus \S_R$;
\item for $z\in\S_R$,
  $
    R_+(z)=R_-(z)J_R(z),
  $ where the jump matrix $J_R(z)$ is given in Proposition \ref{prop-JoR};
\item for $z\in\mathbb{C}\setminus \S_R$, $R(z)=I+O(|z|^{-1})$ as $z\to\infty$.
\end{enumerate}
\end{prop}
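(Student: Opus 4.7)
The plan is to read off (R1)--(R3) from the construction (\ref{R}) together with the facts already assembled, and to close with the standard Liouville uniqueness argument.

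For (R1), each branch of the piecewise definition of $R$ expresses it as $Q$ multiplied on the right by a rational combination of the entire functions $w(nz-\b/2)$, $\prod_{j=0}^{n-1}(z-X_j)^2$, $e^{\pm i\pi(nz-\b/2)}$ and $\sin(n\pi z - \b\pi/2)$. By (Q1), $Q$ is analytic off $\bb{X}$, so inside each connected component of $\bb{C}\setminus\S_R$ the only candidate singularities of $R$ sit at the nodes $X_k$ lying in that component, and Lemma \ref{lem-R} makes precisely those singularities removable.

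Condition (R2) is exactly the content of Proposition \ref{prop-JoR}. For (R3), outside the strip $\{\re z\ge 0,\ |\im z|\le\d\}$ one has $R=Q$, so (Q3) applies directly. In the remaining unbounded components --- the lens arms with $\re z>1$ and $0<|\im z|<\d$ --- one has $R=Q(I+E)$ with $E$ built from $a_{12}^{(\pm)}$. The main bookkeeping step is to check that $E$ still decays at infinity: since $0<c<1$, the factor $c^{nz-\b/2}$ inside $w(nz-\b/2)$ produces exponential decay in $\re z$ that beats the polynomial growth of $\prod(z-X_j)^2$, while the $\sin/\exp$ denominator is bounded below on that region, so $E=O(|z|^{-1})$ and (R3) follows from (Q3).

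Finally, a direct inspection of the matrices in Proposition \ref{prop-JoR} gives $\det J_R \equiv 1$ on $\S_R$, so $\det R$ has no jumps; combined with removability at the nodes and the normalization (R3), Liouville's theorem forces $\det R \equiv 1$, and in particular $R$ is invertible off $\S_R$. If $\wt R$ is a second solution, the matrix $R \wt R^{-1}$ inherits no jump across $\S_R$ (the common jump factor $J_R$ cancels), extends holomorphically across every node, and tends to $I$ at infinity, so Liouville's theorem again yields $R \equiv \wt R$. The only nontrivial part of the whole argument is the decay check for $E$ in the lens arms; everything else is mechanical once Lemma \ref{lem-R} and Proposition \ref{prop-JoR} are in hand.
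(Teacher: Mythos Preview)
Your proof is correct and follows essentially the same route as the paper, which simply invokes (Q1), Lemma \ref{lem-R}, Proposition \ref{prop-JoR}, (Q3), and Liouville in one line each; you have merely unpacked the (R1) and (R3) steps with more care (in particular the decay of $a_{12}^{(\pm)}$ in the lens arms, which the paper absorbs into the citation of (Q3)). One small wording slip: $w(nz-\b/2)$ is not entire because of the Gamma poles, but since those poles lie in $\re z\le 0$ they never meet the regions where the triangular factors in (\ref{R}) are applied, so your argument is unaffected.
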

\begin{proof}
  Condition (R1) follows from the analyticity condition (Q1) in Proposition \ref{prop-Q} and the definition of $R(z)$ in (\ref{R}).
  Proposition \ref{prop-JoR} gives (R2).
  Furthermore, the normalization condition (Q3) in Proposition \ref{prop-Q} yields (R3).
  The uniqueness of solution is again a direct consequence of Liouville's theorem.
\end{proof}

\section{The Equilibrium Measure}
For the preparation of the third transformation $R\to S$,
we investigate the equilibrium measure corresponding to the Meixner polynomials.
In the existing literature, the equilibrium measure is usually obtained by
solving a minimization problem of a certain quadratic functional (cf. \cite{BKMM03, BKMM07, DW07, DKMVZ99}).
Here, we prefer to use the method introduced by A. B. J. Kuijlaars and W. Van Assche \cite{KV99}.

Consider the monic polynomials $q_{n,N}(x):=N^{-n}\pi_n(Nx-\b/2)$, where $N\in\bb{N}$.
From (\ref{monic recurrence relation}), we have
\begin{eqnarray*}
  xq_{n,N}(x)=q_{n+1,N}(x)+\frac{(n+\b/2)(1+c)}{N(1-c)}q_{n,N}(x)+\frac{n(n+\b-1)c}{N^2(1-c)^2}q_{n,N}(x).
\end{eqnarray*}
The coefficients $\frac{(n+\b/2)(1+c)}{N(1-c)}$ and $\frac{n(n+\b-1)c}{N^2(1-c)^2}$ correspond to
the recurrence coefficients $b_{n,N}$ and $a_{n,N}^2$ in \cite[(1.6)]{KV99}.
Suppose $n/N\to t>0$ as $n\to\infty$. It can be shown that
\begin{eqnarray*}
  \frac{(n+\b/2)(1+c)}{N(1-c)}\to \frac{1+c}{1-c}t,\indent \sqrt{\frac{n(n+\b-1)c}{N^2(1-c)^2}}\to \frac{\sqrt c}{1-c}t.
\end{eqnarray*}
Define two constants
\begin{eqnarray}\label{ab}
  a:=\frac{1-\sqrt{c}}{1+\sqrt{c}}\indent\AND\indent
  b:=\frac{1+\sqrt{c}}{1-\sqrt{c}},
\end{eqnarray}
and note that $ab=1$.
The functions $\a(t)$ and $\b(t)$ in \cite[(1.8)]{KV99} are equal to $at$ and $bt$ respectively.
Therefore, from Theorem 1.4 in \cite{KV99},
the asymptotic zero distribution of $q_{n,N}(x)$ with $n/N\to t>0$ is given by
\begin{eqnarray*}
  \mu_t(x)=\frac 1 t\int_0^t\omega_{{[as, bs]}}(x)ds,
\end{eqnarray*}
where
\begin{eqnarray*}
  \cfrac{d\omega_{{[as, bs]}}(x)}{dx}=
  \begin{cases}
  \cfrac1{\pi\sqrt{(bs-x)(x-as)}},&\indent x\in(as,bs),\\
  \\
  0,&\indent \textup{elsewhere};
  \end{cases}
\end{eqnarray*}
see \cite[(1.4)]{KV99}. Thus, the density function of $\mu_t(x)$ is
$$
\frac{d\mu_t(x)}{dx}=\cfrac 1{\pi t}\int_{ax}^{bx}\cfrac{ds}{\sqrt{(bs-x)(x-as)}}
$$
for $x\in[0,at]$, and
$$
\frac{d\mu_t(x)}{dx}=\frac 1{\pi t}\int_{ax}^t\cfrac{ds}{\sqrt{(bs-x)(x-as)}}
$$
for $x\in[at,bt]$.
We only need to consider the special case $N=n$.
Therefore, when $t=1$, the density function becomes
\begin{eqnarray}\label{mu}
  \rho(x):=\frac{d\mu_1(x)}{dx}=\begin{cases}
    1,&\indent 0<x<a,\\
  \\
  \cfrac{1}{\pi }\arccos\cfrac{x(b+a)-2}{x(b-a)},&\indent a<x<b.
  \end{cases}
\end{eqnarray}
This {\it equilibrium measure} for our problem is $d\mu_1(x)=\rho(x)dx$.
Note that the constants $a$ and $b$ defined in (\ref{ab})
are the same as the constants $\a_-$ and $\a_+$ in \cite[(2.6)]{JW98}.
They are called the {\it Mhaskar-Rakhmanov-Saff numbers} or the {\it turning points}.
We now define the so-called {\it $g$ -- function}.
\begin{eqnarray}\label{g}
  g(z):=\int_0^b
  \log(z-x)\rho(x)dx
\end{eqnarray}
for $z\in\bb{C}\setminus(-\infty,b]$.
On account of (\ref{mu}), the derivative of $g(z)$ can be calculated as shown below.
\begin{eqnarray}\label{g'}
  g'(z)=\int_0^b
  \frac{1}{z-x}\rho(x)dx=
  -\log\cfrac{z(b+a)-2+2\sqrt{(z-a)(z-b)}}{z(b-a)}+\cfrac {-\log c}2.
\end{eqnarray}

\begin{prop}\label{prop-g'}
The function $g'(z)$ given in (\ref{g'}) is the unique solution of the following scalar Riemann-Hilbert problem:
\begin{enumerate}[(g1)]
\item  $g'(z)$ is analytic in
  $\mathbb{C}\setminus [0,b]$;
\item
denoting the limiting value taken by $g'(z)$ on the real line from the upper half plane by $g'_+(x)$
and that taken from the lower half plane by $g'_-(x)$,
the function $g'(z)$ satisfies the jump conditions:
  \begin{eqnarray}
    g'_+(x)-g'_-(x)=-2\pi i,&\indent& 0<x<a,\label{Jog'1}\\
    g'_+(x)+g'_-(x)=-\log c,&\indent& a<x<b;\label{Jog'2}
  \end{eqnarray}
\item
  $g'(z)=\cfrac 1 z+O(|z|^{-2})$, as $z\to\infty$.
\end{enumerate}
\end{prop}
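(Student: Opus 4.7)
The plan is to verify conditions (g1)--(g3) directly from the explicit formula (\ref{g'}), and then to establish uniqueness by Liouville's theorem after removing the jump on $(a,b)$. Throughout, $\sqrt{(z-a)(z-b)}$ denotes the branch analytic on $\mathbb{C}\setminus[a,b]$ that behaves like $z$ at infinity, so that its boundary values on $(a,b)$ are $\pm i\sqrt{(x-a)(b-x)}$ from the upper and lower half planes, respectively; the logarithm in (\ref{g'}) is the principal branch.

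For (g1), I would check that with the above branch, the argument of the logarithm is single-valued, analytic, and nonvanishing on $\mathbb{C}\setminus[0,b]$ (the zero at $z=0$ gives rise to a jump on $(0,a)$ rather than an isolated singularity), so the principal logarithm yields an analytic function there. For (g3), a direct expansion at infinity, using $ab=1$, $a+b+2=4/(1-c)$, and $b-a=4\sqrt{c}/(1-c)$, shows that the constant term in $g'(z)$ cancels, leaving $g'(z)=1/z+O(|z|^{-2})$; alternatively this follows from the integral representation and the fact that $\mu_1$ is a probability measure. On $(0,a)$ the radical is real-analytic, so the jump of $g'$ arises entirely from the logarithm; applying Plemelj--Sokhotski to the Cauchy integral form of $g'$ yields
$$g'_+(x)-g'_-(x)=-2\pi i\,\rho(x)=-2\pi i,$$
which is (\ref{Jog'1}).

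The main content is (\ref{Jog'2}). On $(a,b)$ the boundary values
$$\frac{x(b+a)-2\pm 2i\sqrt{(x-a)(b-x)}}{x(b-a)}$$
of the argument of the logarithm are complex conjugates, and the step on which the whole verification hinges is the identity
$$(x(b+a)-2)^2+4(x-a)(b-x)=x^2\bigl[(b+a)^2-4ab\bigr]=x^2(b-a)^2,$$
which uses $ab=1$. It shows that these two boundary values have modulus $1$, so $\log_++\log_-=0$ on $(a,b)$, and consequently $g'_+(x)+g'_-(x)=-\log c$, giving (\ref{Jog'2}). I expect this algebraic identity --- which is where the specific location of the turning points and the equilibrium nature of $\mu_1$ enter --- to be the only nontrivial step; the rest is bookkeeping about branches and endpoint limits.

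For uniqueness, let $\tilde g'$ be any other solution and set $h:=g'-\tilde g'$. Then $h$ is analytic on $\mathbb{C}\setminus[0,b]$, has zero jump on $(0,a)$ (and so extends analytically across that segment), satisfies $h_++h_-=0$ on $(a,b)$, and is $O(|z|^{-2})$ at infinity. The product $h(z)\sqrt{(z-a)(z-b)}$ therefore has no jump on $(a,b)$, has at worst removable singularities at the endpoints $\{0,a,b\}$ (since $h$ inherits at most the mild endpoint behavior visible in (\ref{g'}) and the radical supplies an additional $|z-a|^{1/2},|z-b|^{1/2}$), and decays like $O(|z|^{-1})$ at infinity. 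Liouville's theorem then forces this entire function to vanish, so $h\equiv 0$, completing the proof.
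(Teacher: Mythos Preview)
Your proof is correct and follows essentially the same route as the paper: direct verification of (g1)--(g3) from the explicit formula (\ref{g'}), followed by Liouville's theorem for uniqueness. The paper simply writes down the boundary values $g'_\pm(x)$ on $(0,a)$ and $(a,b)$ and reads off the relations; your modulus-one identity $(x(b+a)-2)^2+4(x-a)(b-x)=x^2(b-a)^2$ makes explicit the algebra behind why $\log_++\log_-=0$ on $(a,b)$, and your uniqueness argument (multiplying $h$ by $\sqrt{(z-a)(z-b)}$ to kill the additive jump) spells out what the paper compresses into a one-line appeal to Liouville.
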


\begin{proof}
  The analyticity condition (g1) is trivial by (\ref{g'}).
  The normalization condition (g3) follows from the fact that $\int_0^b\rho(x)dx=1$.
  For $0<x<a$, we obtain from (\ref{g'}) that
  $$g'_\pm(x)=-\log\cfrac{-x(b+a)+2+2\sqrt{(a-x)(b-x)}}{x(b-a)}\mp i\pi+\cfrac {-\log c}2.$$
  Therefore, the relation (\ref{Jog'1}) follows.
  For $a<x<b$, we obtain from (\ref{g'}) that
  $$g'_\pm(x)=-\log\cfrac{x(b+a)-2\pm 2i\sqrt{(x-a)(b-x)}}{x(b-a)}+\cfrac {-\log c}2.$$
  Therefore, the relation (\ref{Jog'2}) follows.
  Finally, the uniqueness is again guaranteed by Liouville's theorem.
\end{proof}

\begin{rem}\label{rem-mu}
  From (\ref{mu}) we observe that the equilibrium measure of the Meixner polynomials corresponds to
  the saturated-band-void configuration defined in \cite{BKMM07}; see also \cite{DW07}.
  We point out that the equilibrium measure $\rho(x)dx$ can be solved in a different way,
  that is, regard $\rho(x)dx$ as the measure which satisfies the constraint
  $$0\le\rho(x)\le1$$ on the interval $[0,\infty)$, and which minimizes the quadratic functional
  $$\int_0^\infty\int_0^\infty\log\frac{1}{|x-y|}\rho(x)\rho(y)dxdy+\int_0^\infty v(x)\rho(x)dx,$$
  where $v(x)$ is defined in (\ref{v}); see \cite{DS97,ST97}.
  Following the procedure in \cite[Section B.3]{BKMM07}, we first show that
  the Mhaskar-Rakhmanov-Saff numbers $a$ and $b$ are the solutions to the following equations
  \begin{eqnarray*}
    \int_a^b\frac{v'(x)}{\sqrt{(x-a)(b-x)}}dx-\int_0^a\frac{2\pi}{\sqrt{(a-x)(b-x)}}dx&=&0,\\
    \int_a^b\frac{xv'(x)}{\sqrt{(x-a)(b-x)}}dx-\int_0^a\frac{2\pi x}{\sqrt{(a-x)(b-x)}}dx&=&2\pi.
  \end{eqnarray*}
  In the second step we find that the function $g'(z)$,
  which corresponds to the function $F(z)$ in \cite[(710)]{BKMM07},
  has the explicit expression
  $$g'(z)=\int_0^a\frac{\sqrt{(z-a)(z-b)}}{\sqrt{(a-x)(b-x)}}\frac{dx}{x-z}
  -\int_a^b\frac{\sqrt{(z-a)(z-b)}}{\sqrt{(x-a)(b-x)}}\frac{v'(x)dx}{2\pi(x-z)}.$$
  Finally, the equilibrium measure $\rho(x)dx$ is supported on the interval $[0,b]$ and
  $$\rho(x)=\frac{g'_-(x)-g'_+(x)}{2\pi i}$$ for $x\in[0,b]$.
  A direct calculation shows that $\rho(x)=1$ on the saturated interval $[0,a]$,
  and $\rho(x)=\frac{1}{\pi }\arccos\frac{x(b+a)-2}{x(b-a)}$ on the band $[a,b]$.
  This agrees with the formula (\ref{mu}).
\end{rem}
Recall that $v(z)=-z\log c$ in (\ref{v}). It is easily seen from (\ref{g'}) that
$$-g'(\z)+\frac{v'(\z)}{2}=\log\frac{\z(b+a)-2+2\sqrt{(\z-a)(\z-b)}}{\z(b-a)}$$
for $\z\in\bb{C}\setminus(-\infty,b]$.
We introduce the so-called {\it $\phi$ -- function}.
\begin{eqnarray}\label{phi}
  \phi(z):=\int_b^z (-g'(\z)+\frac{v'(\z)}{2})d\z
  =\int_b^z\log\frac{\z(b+a)-2+2\sqrt{(\z-a)(\z-b)}}{\z(b-a)}d\z
\end{eqnarray}
for $z\in\bb{C}\setminus(-\infty,b]$. From the definition we observe
$$\phi(z)=-g(z)+v(z)/2+g(b)-v(b)/2=-g(z)+v(z)/2+l/2,$$
 where
\begin{eqnarray}\label{l}
l:=2g(b)-v(b)=2\log\frac{b-a}4-2
\end{eqnarray}
is called the {\it Lagrange multiplier}.
We also introduce the so-called {\it $\wt\phi$ -- function}.
\begin{eqnarray}\label{phi'}
  \wt\phi(z):=\int_a^z\log\frac{-\z(b+a)+2-2\sqrt{(\z-a)(\z-b)}}{\z(b-a)}d\z
  =\phi(z)\pm i\pi(1-z)
\end{eqnarray}
for $z\in\bb{C}_\pm$.
Note that the function $\wt\phi(z)$ can be analytically continued to the interval $(0,a)$;
see (\ref{phi'+-}).
We now provide some important properties of the $g$ --, $\phi$ -- and $\wt\phi$ -- functions.
\begin{prop}\label{prop-phi}
Let the functions $g, \phi, \wt\phi$ be defined as in (\ref{g}), (\ref{phi}) and (\ref{phi'}), respectively.
Recall from (\ref{v}) and (\ref{l}) that $v(z)=-z\log c$ and $l=2\log\frac{b-a}4-2$. We have
\begin{eqnarray}\label{gphi}
  2g(z)+2\phi(z)-v(z)-l=0
\end{eqnarray}
for all $z\in\bb{C}\setminus(-\infty,b]$.
Denote the boundary value taken by $\phi(z)$ on the real line from the upper half plane by $\phi_+$
and that taken from the lower half plane by $\phi_-$.
We have
\begin{eqnarray}\label{phi+-}
  \phi_+=\begin{cases}
    \phi_--2i\pi(1-x)&:\indent 0<x<a,\\
    -\phi_-&:\indent a<x<b,\\
    \phi_-&:\indent x>b.
  \end{cases}
\end{eqnarray}
Denote the boundary value taken by $\wt\phi(z)$ on the real line from the upper half plane by $\wt\phi_+$
and that taken from the lower half plane by $\wt\phi_-$.
We have
\begin{eqnarray}\label{phi'+-}
  \wt\phi_+=\begin{cases}
    \wt\phi_-&:\indent 0<x<a,\\
    -\wt\phi_-&:\indent a<x<b,\\
    \wt\phi_-+2i\pi(1-x)&:\indent x>b.
  \end{cases}
\end{eqnarray}
Denote the boundary value taken by $g(z)$ on the real line from the upper half plane by $g_+$
and that taken from the lower half plane by $g_-$.
We have
\begin{eqnarray}\label{prop-g+-}
  g_++g_--v-l=\begin{cases}
    -2\phi_+-2i\pi(1-x)&:\indent 0<x<a,\\
    0&:\indent a<x<b,\\
    -2\phi&:\indent x>b.
  \end{cases}
\end{eqnarray}
Furthermore, we have
\begin{eqnarray}\label{g+-}
  g_+-g_-=\begin{cases}
    2i\pi(1-x)&:\indent 0<x<a,\\
    -2\phi_+=2\phi_-&:\indent a<x<b,\\
    0&:\indent x>b.
  \end{cases}
\end{eqnarray}
For any small $\ve>0$ and $z\in U(b,\ve):=\{z\in\bb{C}: |z-b|<\ve\}$, we have
\begin{eqnarray}\label{prop-phi11}
  \phi(z)=\frac{4(z-b)^{3/2}}{3b\sqrt{b-a}}+O(\ve^2).
\end{eqnarray}
For any small $\ve>0$ and $z\in U(a,\ve):=\{z\in\bb{C}: |z-a|<\ve\}$, we have
\begin{eqnarray}\label{prop-phi12}
  \wt\phi(z)=\frac{-4(a-z)^{3/2}}{3a\sqrt{b-a}}+O(\ve^2).
\end{eqnarray}
For any small $\ve>0$ and $x>b+\ve$, we have
\begin{eqnarray}\label{prop-phi13}
  \phi(x)>\phi(b+\ve)=\frac{4\ve^{3/2}}{3b\sqrt{b-a}}+O(\ve^2).
\end{eqnarray}
For any small $\ve>0$ and $0<x<a-\ve$, we have
\begin{eqnarray}\label{prop-phi14}
  \wt\phi(x)<\wt\phi(a-\ve)=\frac{-4\ve^{3/2}}{3a\sqrt{b-a}}+O(\ve^2).
\end{eqnarray}
For any $x\in(a,b)$ and sufficiently small $y>0$, we have
\begin{eqnarray}
  \re\phi(x\pm iy)&=&-y\arccos\cfrac{x(b+a)-2}{x(b-a)}+O(y^2),\label{prop-phi21}\\
  \re\wt\phi(x\pm iy)&=&y\arccos\cfrac{2-x(b+a)}{x(b-a)}+O(y^2).\label{prop-phi22}
\end{eqnarray}
For any $x\in(b,\infty)$ and sufficiently small $y>0$, we have
\begin{eqnarray}\label{prop-phi23}
  \re\phi(x\pm iy)=\phi(x)+O(y^2),\indent
  \re\wt\phi(x\pm iy)=\phi(x)+\pi y+O(y^2).
\end{eqnarray}
For any $x\in(0,a)$ and sufficiently small $y>0$, we have
\begin{eqnarray}\label{prop-phi24}
  \re\wt\phi(x\pm iy)=\wt\phi(x)+O(y^2),\indent
  \re\phi(x\pm iy)=\wt\phi(x)-\pi y+O(y^2).
\end{eqnarray}
\end{prop}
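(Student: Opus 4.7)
The proof is an accounting exercise built on the relation $\phi(z)=-g(z)+v(z)/2+l/2$ displayed just after (\ref{phi}), and splits into four stages.

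\textbf{Stage 1 (algebraic identity and jumps).} Identity (\ref{gphi}) is the stated relation rearranged. Differentiating it and feeding in the jumps of $g'$ from Proposition~\ref{prop-g'} gives $\phi'_+-\phi'_-=2i\pi$ on $(0,a)$, $\phi'_++\phi'_-=0$ on $(a,b)$, and no jump on $(b,\infty)$; integrating these relations along the axis and fixing constants via $\phi(b)=0$ (immediate from (\ref{phi})) produces (\ref{phi+-}). The $\wt\phi$ jumps (\ref{phi'+-}) follow from $\wt\phi=\phi\pm i\pi(1-z)$ in $\bb{C}_\pm$, while (\ref{prop-g+-})--(\ref{g+-}) come from combining (\ref{phi+-}) with (\ref{gphi}). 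A useful by-product, needed later, is that on $(a,b)$ Schwarz reflection $\overline{\phi_+(\bar z)}=\phi_-(z)$ together with $\phi_++\phi_-=0$ forces $\phi_+(x)$ to be purely imaginary.

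\textbf{Stage 2 (endpoint expansions and monotonicity).} For (\ref{prop-phi11}) and (\ref{prop-phi12}) one Taylor-expands the integrands of $\phi$ and $\wt\phi$ near the branch points, exploiting $ab=1$, which gives $b(b+a)-2=b(b-a)$ and $2-a(b+a)=a(b-a)$. Near $\z=b$ the integrand in (\ref{phi}) becomes $\log\bigl(1+\tfrac{2\sqrt{\z-b}}{b\sqrt{b-a}}+O(\z-b)\bigr)$, and integrating from $b$ yields (\ref{prop-phi11}); a mirror computation near $\z=a$ yields (\ref{prop-phi12}), with the sign controlled by the branch of $\sqrt{(\z-a)(\z-b)}$ asymptotic to $\z$ at infinity (hence negative on $(-\infty,a)$). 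For (\ref{prop-phi13})--(\ref{prop-phi14}) one then checks $\phi'(x)>0$ on $(b,\infty)$ (which reduces to $ax-1+\sqrt{(x-a)(x-b)}>0$, holding since $x>b=1/a$) and $\wt\phi'(x)>0$ on $(0,a)$; combined with the endpoint expansions, this yields the bounds.

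\textbf{Stage 3 (real-part expansions near the axis).} Identities (\ref{prop-phi21})--(\ref{prop-phi24}) come from $\phi(x\pm iy)=\phi_\pm(x)\pm iy\,\phi'_\pm(x)+O(y^2)$ and the analogue for $\wt\phi$. For $x\in(a,b)$, $\re\phi_\pm(x)=0$ by the imaginary-part observation, and the algebraic identity $(x(b+a)-2)^2+4(x-a)(b-x)=x^2(b-a)^2$ (again a consequence of $ab=1$) shows $\phi'_+(x)=i\arccos\tfrac{x(b+a)-2}{x(b-a)}$, giving (\ref{prop-phi21}); (\ref{prop-phi22}) follows from the $\wt\phi$--$\phi$ relation. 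For $x\in(b,\infty)$ and $x\in(0,a)$, one of $\phi,\wt\phi$ is already real on the axis while the other differs by $\pm i\pi(1-z)$, so (\ref{prop-phi23})--(\ref{prop-phi24}) reduce to $\re(\pm i\pi(1-z))=\pm\pi y$.

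\textbf{Main obstacle.} No individual computation is hard, but the bookkeeping of branches is delicate: the sign in (\ref{prop-phi12}), the purely imaginary character of $\phi_+$ on $(a,b)$, and the real-valuedness of $\wt\phi$ on $(0,a)$ all rest on making consistent choices of $\sqrt{(z-a)(z-b)}$ (asymptotic to $z$ at infinity) and of the logarithm when passing from the integral formulas for $\phi$ and $\wt\phi$ to their boundary values across $[0,b]$.
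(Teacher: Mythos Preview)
Your proposal is correct and follows essentially the same approach as the paper: both verify (\ref{gphi})--(\ref{g+-}) from the defining relation $\phi=-g+v/2+l/2$ together with the jump data on $g'$ (Proposition~\ref{prop-g'}), both obtain (\ref{prop-phi11})--(\ref{prop-phi14}) by Taylor-expanding the integrands near $a$ and $b$ using $ab=1$ and then checking monotonicity of $\phi'$, $\wt\phi'$, and both derive (\ref{prop-phi21})--(\ref{prop-phi24}) from the first-order Taylor expansion in $y$ combined with the unit-modulus identity $(x(b+a)-2)^2+4(x-a)(b-x)=x^2(b-a)^2$. The one place your sketch is slightly loose is the phrase ``fixing constants via $\phi(b)=0$'' in Stage~1: this pins down the sum relation $\phi_++\phi_-=0$ on $(a,b)$, but to get the constant in $\phi_+-\phi_-$ on $(0,a)$ you still need the specific value $\phi_+(a)=-i\pi(1-a)$, which comes from $\int_a^b\arccos\frac{s(b+a)-2}{s(b-a)}\,ds=\pi(1-a)$ (equivalently, the normalization $\int_a^b\rho=1-a$); the paper invokes exactly this integral at the corresponding step.
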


\begin{proof}
 The relation (\ref{gphi}) follows from the definition of $\phi$ -- function in (\ref{phi}) and Lagrange multiplier in (\ref{l}).

 To prove (\ref{phi+-}), we first see from (\ref{phi}) that $\phi(z)$ is analytic for $z\in\bb{C}\setminus(-\infty,b]$.
 Thus, we have $\phi_+(x)-\phi_-(x)=0$ for $x>b$.
 Since
 $$\phi_\pm(x)=\int_b^x\log\frac{s(b+a)-2\pm2i\sqrt{(s-a)(b-s)}}{s(b-a)}ds$$
 for $a<x<b$, we also have $\phi_+(x)+\phi_-(x)=0$ for $a<x<b$.
 On the other hand, for $0<x<a$, we obtain from (\ref{phi}) that
 \begin{eqnarray*}
 \phi_\pm(x)&=&\int_b^a\log\frac{s(b+a)-2\pm2i\sqrt{(s-a)(b-s)}}{s(b-a)}ds\\
 &&+\int_a^x(\log\frac{-s(b+a)+2+2i\sqrt{(a-s)(b-s)}}{s(b-a)}\pm i\pi)ds.
 \end{eqnarray*}
 In view of the equality
 $$\int_b^a\log\frac{s(b+a)-2\pm2i\sqrt{(s-a)(b-s)}}{s(b-a)}ds
 =\mp i\int_a^b\arccos\cfrac{s(b+a)-2}{s(b-a)}ds=\mp i\pi(1-a),$$
 we have
 $$\phi_+(x)-\phi_-(x)=-2i\pi(1-a)+2i\pi(x-a)=-2i\pi(1-x)$$
 for $0<x<a$.
 This ends the proof of (\ref{phi+-}).

 Applying (\ref{phi'}) to (\ref{phi+-}) gives (\ref{phi'+-}).

 From (\ref{gphi}) we have $$g_+(x)+g_-(x)-v(x)-l=-\phi_+(x)-\phi_-(x)$$ for $x\in\bb{R}$.
 Hence, the relation (\ref{prop-g+-}) follows immediately from (\ref{phi+-}).

 It is easily seen from (\ref{g}) that the function $g(z)$ is analytic for $z\in\bb{C}\setminus(-\infty,b]$.
 Coupling (\ref{gphi}) and (\ref{phi+-}) yields
 $$g_+-g_-=\phi_--\phi_+=-2\phi_+=2\phi_-$$
 for $a<x<b$.
 On the other hand, a combination of (\ref{phi}), (\ref{gphi}) and (\ref{phi+-}) gives
 \begin{eqnarray*}
 g_+(a)-g_-(a)&=&\phi_-(a)-\phi_+(a)=2\phi_-(a)=2\int_b^a\log\frac{s(b+a)-2-2i\sqrt{(s-a)(b-s)}}{s(b-a)}ds\\
 &=&2i\int_a^b\arccos\cfrac{s(b+a)-2}{s(b-a)}ds=2i\pi(1-a).
 \end{eqnarray*}
 Coupling this with (\ref{Jog'1}) gives
 $$g_+(x)-g_-(x)=g_+(a)-g_-(a)+2i\pi(a-x)=2i\pi(1-x)$$
 for $0<x<a$.
 This completes the proof of (\ref{g+-}).

 For any small $\ve>0$ and $z\in U(b,\ve):=\{z\in\bb{C}: |z-b|<\ve\}$, from (\ref{phi}) we have
\begin{eqnarray*}
  \phi(z)=\int_b^z \log\left(1+\cfrac{2\sqrt{(b-a)(\z-b)}}{b(b-a)}+O(\ve)\right)d\z
  =\frac{4(z-b)^{3/2}}{3b\sqrt{b-a}}+O(\ve^2).
\end{eqnarray*}
 Here again, we have used the fact that $ab=1$.
 This gives (\ref{prop-phi11}).

 For any small $\ve>0$ and $z\in U(a,\ve):=\{z\in\bb{C}: |z-a|<\ve\}$, from (\ref{phi'}) we have
\begin{eqnarray*}
  \phi(z)=\int_a^z \log\left(1+\cfrac{2\sqrt{(a-\z)(b-a)}}{a(b-a)}+O(\ve)\right)d\z
  =\frac{-4(a-z)^{3/2}}{3a\sqrt{b-a}}+O(\ve^2).
\end{eqnarray*}
 This gives (\ref{prop-phi12}).

 From (\ref{phi}) and (\ref{phi'}), we have
 $$\phi'(x)=\log\cfrac{x(b+a)-2+2\sqrt{(x-a)(x-b)}}{x(b-a)}>0$$ for $x>b$ and
 $$\wt\phi'(x)=\log\cfrac{-\z(b+a)+2-2\sqrt{(\z-a)(\z-b)}}{\z(b-a)}>0$$ for $0<x<a$.
 Consequently, $\phi(x)>\phi(b+\ve)$ for $x>b+\ve$, and $\wt\phi(x)<\wt\phi(a-\ve)$ for $0<x<a-\ve$.
 Therefore, the formulas (\ref{prop-phi13}) and (\ref{prop-phi14}) follow from (\ref{prop-phi11}) and (\ref{prop-phi12}), respectively.

 It is easily seen from (\ref{phi}) and (\ref{phi'}) that
 $\phi_\pm(x)$ and $\wt\phi_\pm(x)$ are purely imaginary for $a<x<b$.
 Hence, for any $x\in(a,b)$ and sufficiently small $y>0$, we have
 \begin{eqnarray*}
  \re\phi(x\pm iy)&=&\re\int_x^{x\pm iy} \log\frac{\z(b+a)-2+2\sqrt{(\z-a)(\z-b)}}{\z(b-a)}d\z\\
  &=&\re\int_x^{x\pm iy} \left(\pm i\arccos\cfrac{x(b+a)-2}{x(b-a)}+O(y)\right)d\z\\
  &=&-y\arccos\cfrac{x(b+a)-2}{x(b-a)}+O(y^2),
 \end{eqnarray*}
 and
 \begin{eqnarray*}
  \re\wt\phi(x\pm iy)&=&\re\int_x^{x\pm iy} \log\frac{-\z(b+a)+2-2\sqrt{(\z-a)(\z-b)}}{\z(b-a)}d\z\\
  &=&\re\int_x^{x\pm iy} \left(\mp i\arccos\cfrac{2-x(b+a)}{x(b-a)}+O(y)\right)d\z\\
  &=&y\arccos\cfrac{2-x(b+a)}{x(b-a)}+O(y^2).
 \end{eqnarray*}
 This ends the proof of (\ref{prop-phi21}) and (\ref{prop-phi22}).

 For any $x\in(b,\infty)$ and sufficiently small $y>0$, from (\ref{phi}) we have
\begin{eqnarray*}
  \re\phi(x\pm iy)&=&\phi(x)+\re\int_x^{x\pm iy} \log\frac{\z(b+a)-2+2\sqrt{(\z-a)(\z-b)}}{\z(b-a)}d\z\\
  &=&\phi(x)+\re\int_x^{x\pm iy} \left(\log\frac{x(b+a)-2+2\sqrt{(x-a)(x-b)}}{x(b-a)}+O(y)\right)d\z\\
  &=&\phi(x)+O(y^2).
\end{eqnarray*}
 Moreover, we obtain from (\ref{phi'}) that
\begin{eqnarray*}
  \re\wt\phi(x\pm iy)=\re\phi(x\pm iy)+\pi y=\phi(x)+\pi y+O(y^2).
\end{eqnarray*}
 This proves (\ref{prop-phi23}).

 For any $x\in(0,a)$ and sufficiently small $y>0$, from (\ref{phi'}) we have
\begin{eqnarray*}
  \re\wt\phi(x\pm iy)&=&\wt\phi(x)+\re\int_x^{x\pm iy} \log\frac{-\z(b+a)+2-2\sqrt{(\z-a)(\z-b)}}{\z(b-a)}d\z\\
  &=&\wt\phi(x)+\re\int_x^{x\pm iy} \left(\log\frac{-x(b+a)+2+2\sqrt{(a-x)(b-x)}}{x(b-a)}+O(y)\right)d\z\\
  &=&\wt\phi(x)+O(y^2).
\end{eqnarray*}
 Moreover, we obtain from (\ref{phi'}) that
\begin{eqnarray*}
  \re\phi(x\pm iy)=\re\wt\phi(x\pm iy)-\pi y=\wt\phi(x)-\pi y+O(y^2).
\end{eqnarray*}
 This proves (\ref{prop-phi24}).
\end{proof}

\begin{rem}\label{rem-d}
  Recall that the constant $\d_0>0$ introduced in the definition of $R(z)$ has not been determined;
  see (\ref{R}). Fix any $0<c<1$ and $1\le\b<2$,
  we choose $\d_0>0$ to be sufficiently small such that
  the function $\phi(z)^{2/3}$ is analytic in the open ball $U(b,\d_0)$ and
  the function $\wt\phi(z)^{2/3}$ is analytic in the open ball $U(a,\d_0)$.
  We also require $\d_0$ to be so small that
  the formulas (\ref{prop-phi11})-(\ref{prop-phi24}) in Proposition \ref{prop-phi}
  are valid whenever $\ve, y\in(0,\d_0)$.
  The existence of such a positive constant $\d_0$ is obvious.
  Furthermore, since the functions $\phi(z)$ and $\wt\phi(z)$ depend only on the constants $c$ and $\b$.
  the constant $\d_0$ is independent of the polynomial degree $n$.
\end{rem}

For the sake of simplicity, we introduce some auxiliary functions. Define
\begin{eqnarray}\label{E}
  E(z):=\left(\frac {z-1}z\right)^{\frac{1-\b}2}\exp\left\{-n\int_0^{1}\log(z-x)dx\right\}
  \prod\limits_{k=0}^{n-1}(z-X_k)
\end{eqnarray}
for $z\in\mathbb{C}\setminus[0, 1]$, and
\begin{eqnarray}\label{E'}
  \wt E(z):=\cfrac{\pm i  E(z)e^{\mp i\pi (nz-\b/2)}}{2\sin(n\pi z-\b\pi/2)}
\end{eqnarray}
for $z\in\mathbb{C}_\pm$, and
\begin{eqnarray}\label{H}
  H(z):=\left(\frac z{z-1}\right)^{1-\b}W(z)
\end{eqnarray}
for $z\in\mathbb{C}\setminus[0, 1]$, and
\begin{eqnarray}\label{H'}
  \wt H(z)&:=&\left(\frac z{1-z}\right)^{1-\b}W(z)
\end{eqnarray}
for $z\in\mathbb{C}\setminus(-\infty, 0]\cup[1, \infty)$,
where $W(z)$ is defined in (\ref{W}). We also recall from (\ref{X}) that $X_k=\frac{k+\b/2}n$.
The properties of the above auxiliary functions are given in the following lemma.

\begin{lem}\label{lem-E}
The function $\wt E(z)$ defined in (\ref{E'}) can be analytically continued to the interval $(0,1)$.
Moreover, for any $0<x<1$, we have
\begin{eqnarray}\label{E+E-}
 \wt E(x)^2=\cfrac{E_+(x)E_-(x)}{4\sin^2(n\pi x-\b\pi/2)}.
\end{eqnarray}
For any $z\in\bb{C_\pm}$, we have
\begin{eqnarray}
  E(z)/\wt E(z)&=&\mp 2i e^{\pm i\pi (nz-\b/2)}\sin(n\pi z-\b\pi/2)=1-e^{\pm2i\pi(nz-\b/2)},\label{E/E'}\\
  \wt H(z)&=&H(z)e^{\pm i\pi(1-\b)}=-H(z)e^{\mp i\pi\b}.\label{H/H'}
\end{eqnarray}
As $n\to\infty$, we have $E(z)\sim 1$ uniformly for $z$ bounded away from the interval $[0, 1]$
and $E(z)/\wt E(z)\sim1$ uniformly for $z$ bounded away from the real line.
\end{lem}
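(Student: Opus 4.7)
The plan is to treat the four assertions separately. For the analytic continuation of $\wt E$ to $(0,1)$ together with (\ref{E+E-}), I would first compute the jumps of each factor in the definition (\ref{E}) of $E(z)$ across $(0,1)$. Under the principal branch, the prefactor $((z-1)/z)^{(1-\b)/2}$ contributes a ratio $e^{i\pi(1-\b)}$ (since $(z-1)/z$ is a negative real number on $(0,1)$), the exponential $\exp\{-n\int_0^1\log(z-x)\,dx\}$ contributes $e^{-2\pi i n(1-x)}$ (because the integrand jumps by $2\pi i$ over $x'\in(x,1)$), and the polynomial factor is entire. Multiplying these yields $E_+(x)/E_-(x)=-e^{2\pi i(nx-\b/2)}$. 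Substituting this into the two one-sided expressions in (\ref{E'}), the boundary values of $\wt E$ from above and below coincide, giving the analytic continuation; the identity (\ref{E+E-}) is then immediate from $\wt E(x)^2=\wt E(x+i0)\,\wt E(x-i0)$.

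The identities in (\ref{E/E'}) are direct algebraic consequences of (\ref{E'}) once one applies $\mp 2i\sin\th\cdot e^{\pm i\th}=1-e^{\pm 2i\th}$ with $\th=n\pi z-\b\pi/2$. For (\ref{H/H'}), I would compare the branches of the two power factors: the Mobius map $z\mapsto z/(1-z)$ sends $\bb{C}_\pm$ into $\bb{C}_\pm$, while $z\mapsto z/(z-1)=-z/(1-z)$ sends $\bb{C}_\pm$ into $\bb{C}_\mp$. Hence under the principal branch, $\arg(z/(1-z))-\arg(z/(z-1))=\pm\pi$ for $z\in\bb{C}_\pm$, and multiplying by $1-\b$ gives $\wt H/H=e^{\pm i\pi(1-\b)}=-e^{\mp i\pi\b}$.

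For the asymptotic $E(z)\sim 1$ as $n\to\infty$, uniformly for $z$ bounded away from $[0,1]$, the key input is an Euler-Maclaurin analysis of the sum $\sum_{k=0}^{n-1}\log(z-X_k)$ with the shifted midpoint nodes $X_k=(k+\b/2)/n$. A short Taylor expansion around the midpoints $(k+1/2)/n$ is expected to give
\[
\sum_{k=0}^{n-1}\log(z-X_k)=n\int_0^1\log(z-x)\,dx+\frac{\b-1}{2}\log\frac{z-1}{z}+O(1/n).
\]
Exponentiating and inserting into (\ref{E}), the leading exponential and the endpoint correction cancel the corresponding factors of $E$ exactly, leaving $E(z)=1+O(1/n)$. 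For the second asymptotic, I would use the just-proved second equality in (\ref{E/E'}): since $|e^{\pm 2i\pi(nz-\b/2)}|=e^{-2\pi n|\im z|}$ decays uniformly and exponentially once $|\im z|$ is bounded away from zero, $E/\wt E=1+O(e^{-2\pi n|\im z|})$.

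The main obstacle I anticipate is the Euler-Maclaurin step: the $\b/2$ offset in the nodes $X_k$ is precisely what manufactures the endpoint correction $\frac{\b-1}{2}\log((z-1)/z)$ needed to cancel the explicit prefactor in the definition of $E$, and controlling the $O(1/n)$ remainder uniformly for $z$ on compact subsets of $\bb{C}\setminus[0,1]$ requires uniform bounds on the higher $x$-derivatives of $\log(z-x)$. These bounds hold because the distance from $z$ to $[0,1]$ stays bounded below, but the uniformity across such compact sets must be tracked carefully.
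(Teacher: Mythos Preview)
Your argument is correct throughout, and for the jump computation of $E$ across $(0,1)$, the identities (\ref{E/E'})--(\ref{H/H'}), and the asymptotic of $E/\wt E$ via $|e^{\pm 2i\pi(nz-\b/2)}|=e^{-2\pi n|\im z|}$, you are doing exactly what the paper does.

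The one genuine difference is in establishing $E(z)\sim 1$. The paper does not use Euler--Maclaurin: it writes the finite product in closed form as a ratio of gamma functions,
\[
\prod_{k=0}^{n-1}(z-X_k)=\frac{\G(nz-\b/2+1)}{n^n\,\G(nz-\b/2-n+1)},
\]
and applies Stirling's formula to each factor, after which the $(z/(z-1))^{(1-\b)/2}$ term and the exponential of $n\int_0^1\log(z-x)\,dx$ appear explicitly and cancel the corresponding factors in (\ref{E}). Your route---midpoint rule plus a first-order Taylor correction accounting for the $(\b-1)/(2n)$ shift of the nodes---is a clean alternative that produces the same endpoint correction $\frac{\b-1}{2}\log\frac{z-1}{z}$ directly, without invoking gamma functions or Stirling. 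The trade-off is what you already flagged: you must verify uniformity of the $O(1/n)$ remainder on compact subsets of $\bb{C}\setminus[0,1]$, which follows from the uniform bounds $|\partial_x^k\log(z-x)|\le (k-1)!/\mathrm{dist}(z,[0,1])^k$. The paper's Stirling approach inherits uniformity from the standard sectorial error bounds for Stirling's series, so no separate argument is needed there.
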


\begin{proof}
  For $0<x<1$, from (\ref{E}) we have
  $$E_\pm(x)=\left(\frac {1-x}x\right)^{\frac{1-\b}2}e^{\pm i\pi(1-\b)/2}\exp\left\{-n\int_0^1\log|x-s|ds\right\}
  e^{\mp n\pi i(1-x)}\prod\limits_{k=0}^{n-1}(z-X_k).$$
  Consequently, we obtain $E_+(x)/E_-(x)=-e^{2i\pi(nx-\b/2)}$.
  Therefore, it is readily seen from (\ref{E'}) that $\wt E_+(x)=\wt E_-(x)$ on the interval $(0,1)$.
  Moreover, we have
  $$\wt E^2(x)=\cfrac{E_+(x)E_-(x)}{4\sin^2(n\pi x-\b\pi/2)},\indent 0<x<1.$$
  This gives (\ref{E+E-}).

  The relation (\ref{E/E'}) follows from (\ref{E'}).
  The relation (\ref{H/H'}) follows from (\ref{H}) and (\ref{H'}).

  Let $z$ be bounded away from the interval $[0,1]$. Using Stirling's formula, we have
  \begin{eqnarray*}
  \prod\limits_{k=0}^{n-1}(z-X_k)&=&\prod\limits_{k=0}^{n-1}\left(z-\cfrac{k+\b/2}n\right)
  =\cfrac{\G(nz-\b/2+1)}{n^n\G(nz-\b/2-n+1)}\\
  &\sim&\cfrac{\sqrt{2\pi(nz-\b/2)}(\frac{nz-\b/2}e)^{nz-\b/2}}{n^n\sqrt{2\pi(nz-\b/2-n)}(\frac{nz-\b/2-n}e)^{nz-\b/2-n}}\\
  &=&\cfrac{(nz-\b/2)^{\frac{1-\b}2}(\frac{nz-\b/2}{nz})^{nz}(nz)^{nz}}
  {n^n(nz-\b/2-n)^{\frac{1-\b}2}(\frac{nz-\b/2-n}{nz-n})^{nz-n}(nz-n)^{nz-n}e^n}\\
  &\sim&\left(\frac z{z-1}\right)^{\frac{1-\b}2}\left(\frac z{z-1}\right)^{nz}\left(\cfrac{z-1}{e}\right)^n,
  \end{eqnarray*}
  as $n\to\infty$.
  In view of the equality
  $$\exp\left\{-n\int_0^{1}\log(z-x)dx\right\}=\frac{e^n(z-1)^{nz}}{z^{nz}(z-1)^n},$$
  we then obtain from (\ref{E}) that
  \begin{eqnarray*}
    E(z)&\sim&\left(\frac {z-1}z\right)^{\frac{1-\b}2}\cfrac{e^n(z-1)^{nz}}{z^{nz}(z-1)^n}
    \left(\frac z{z-1}\right)^{\frac{1-\b}2}\left(\frac z{z-1}\right)^{nz}\left(\cfrac{z-1}{e}\right)^n=1,
  \end{eqnarray*}
  as $n\to\infty$.

  Finally, as $n\to\infty$, it is easily seen from (\ref{E/E'}) that
  $E(z)/\wt E(z)\sim1$ uniformly for $z$ bounded away from the real line.
  This ends the proof of the lemma.
\end{proof}

\section{The Third Transformation $R\to S$}

Recalling the definition of $g(z)$ in (\ref{g}), we introduce the function
\begin{eqnarray}\label{G}
  G(z):=ng(z)-n\int_0^1\log(z-x)dx=n\int_0^b\log(z-x)\rho(x)dx-n\int_0^1\log(z-x)dx.
\end{eqnarray}
Since $\int_0^b\rho(x)dx=1$, it is easily seen that $G(z)=O(|z|^{-1})$ as $z\to\infty$.
Furthermore, the function $G(z)$ is analytic in $\bb{C}\setminus(-\infty,b]$.
Applying (\ref{phi'}) and (\ref{g+-}) to (\ref{G}) implies
\begin{eqnarray}\label{G+-}
  G_+-G_-=\begin{cases}
    0&:\indent -\infty<x<a,\\
    -2n\wt\phi_+=2n\wt\phi_-&:\indent a<x<1,\\
    -2n\phi_+=2n\phi_-&:\indent 1<x<b.
  \end{cases}
\end{eqnarray}
Note that $G(z)$ can be analytically continued to the interval $(-\infty,a)$.
In terms of the function $G(z)$, we now make the third transformation
\begin{eqnarray}\label{S}
  S(z):=e^{(-nl/2)\s_3}R(z)e^{(-G(z)+nl/2)\s_3}.
\end{eqnarray}
To compute the jump conditions of $S(z)$, we first state the following lemma.

\begin{lem}\label{lem-EH}
For $0<x<1$, we have
\begin{eqnarray}\label{lem-EH1}
\cfrac{4\sin^2(n\pi x-\b\pi/2)}{W\exp(G_++G_--nv-nl)}\prod\limits_{k=0}^{n-1}(x-X_k)^{-2}
=\frac{e^{n(\wt \phi_++\wt\phi_-)}}{\wt H\wt E^2}.
\end{eqnarray}
For $x>1$, we have
\begin{eqnarray}\label{lem-EH2}
-We^{G_++G_--nv-nl}\prod\limits_{k=0}^{n-1}(x-X_k)^2
=\frac{-HE^2}{e^{n(\phi_++\phi_-)}}.
\end{eqnarray}
For $z\in \bb{C}_\pm$, we have
\begin{eqnarray}
  \cfrac{e^{\pm i\pi(nz-\b/2)}We^{2G-nv-nl}}{2i\sin(n\pi z-\b\pi/2)}\prod\limits_{k=0}^{n-1}(z-X_k)^2
  =\cfrac{\pm\wt H \wt EE}{e^{2n\wt\phi}},
\label{lem-EH3}\\
  \cfrac{-2i\sin(n\pi z-\b\pi/2)}{e^{\mp i\pi(nz-\b/2)}We^{2G-nv-nl}}\prod\limits_{k=0}^{n-1}(z-X_k)^{-2}
  =\cfrac{e^{2n\phi}}{\pm H\wt EE}.\label{lem-EH4}
\end{eqnarray}
\end{lem}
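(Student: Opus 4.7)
The four identities should all follow by direct substitution of the defining formulas for $E$, $\wt E$, $H$, $\wt H$ (from (\ref{E})--(\ref{H'})) and $G$ (from (\ref{G})), combined with the algebraic relations for $g$, $\phi$, $\wt\phi$ recorded in Proposition \ref{prop-phi}. My plan is to handle the two off-line identities (\ref{lem-EH3})--(\ref{lem-EH4}) first, and then the two on-line identities (\ref{lem-EH1})--(\ref{lem-EH2}).

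For (\ref{lem-EH3}) and (\ref{lem-EH4}), valid in $\bb{C}_\pm$, I would start by rewriting $e^{2G-nv-nl}$. By the definition (\ref{G}), $2G=2ng-2n\int_0^1\log(z-x)\,dx$, and by (\ref{gphi}) $2ng=nv+nl-2n\phi$, so
\begin{equation*}
e^{2G-nv-nl}=e^{-2n\phi}\exp\Bigl(-2n\textstyle\int_0^1\log(z-x)\,dx\Bigr).
\end{equation*}
Solving (\ref{E}) for the product gives
\begin{equation*}
\prod_{k=0}^{n-1}(z-X_k)^{\pm 2}=E(z)^{\pm 2}\Bigl(\tfrac{z}{z-1}\Bigr)^{\pm(1-\b)}\exp\Bigl(\pm 2n\textstyle\int_0^1\log(z-x)\,dx\Bigr),
\end{equation*}
so substitution into the left-hand sides cancels the $\int_0^1\log(z-x)\,dx$ contributions. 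What remains involves only $W$, $(z/(z-1))^{\pm(1-\b)}$, $E^{\pm 2}$, sines and exponentials. The combination $(z/(z-1))^{1-\b}W$ collapses to $H$ via (\ref{H}), and the ratio $\frac{\pm i e^{\mp i\pi(nz-\b/2)}}{2\sin(n\pi z-\b\pi/2)}$ collapses to $\wt E/E$ via (\ref{E'}); tracking the signs $\pm$ produces exactly the right-hand sides of (\ref{lem-EH3}) and (\ref{lem-EH4}).

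For (\ref{lem-EH1}) on $x\in(0,1)$ and (\ref{lem-EH2}) on $x>1$, the same substitutions apply but with $2G$ replaced by $G_++G_-$. Writing
\begin{equation*}
G_+(x)+G_-(x)=n\bigl(g_+(x)+g_-(x)\bigr)-2n\textstyle\int_0^1\log|x-s|\,ds,
\end{equation*}
I would invoke (\ref{prop-g+-}) to express $g_++g_--v-l$ in terms of $\phi_\pm$ or $\wt\phi_\pm$, using the relation $\wt\phi=\phi\pm i\pi(1-z)$ from (\ref{phi'}) to switch between the two where needed. For $x\in(0,1)$, the product $\prod(x-X_k)^{-2}$ expands as $E_+(x)^{-1}E_-(x)^{-1}$ (up to real factors), and combining with the $4\sin^2(n\pi x-\b\pi/2)$ numerator, the identity (\ref{E+E-}) converts the quotient directly into $1/\wt E(x)^2$; similarly the $(x/(x-1))^{1-\b}W$ combination becomes $\wt H$ via (\ref{H'}), using that $x/(x-1)<0$ on $(0,1)$ to account for the $e^{\pm i\pi(1-\b)}$ of (\ref{H/H'}). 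For $x>1$, the product is simply $E(x)^2$ and the $W$ factor combines with $(x/(x-1))^{1-\b}$ to give $H$, producing (\ref{lem-EH2}).

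The main obstacle I anticipate is purely the bookkeeping: tracking the $\pm$ signs, keeping the branches of $(\cdot)^{1-\b}$ and $\log(z-x)$ consistent across the cuts, and correctly switching between $\phi_\pm$ and $\wt\phi_\pm$ on the various sub-intervals of $\bb{R}$. Once the branch choices are fixed, each identity is just an algebraic verification, and Proposition \ref{prop-phi} together with Lemma \ref{lem-E} supplies every auxiliary relation needed.
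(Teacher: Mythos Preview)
Your proposal is correct and follows essentially the same route as the paper: both arguments reduce to combining the definition of $E$ in (\ref{E}) with the definition of $G$ in (\ref{G}) to cancel the $\int_0^1\log(z-x)\,dx$ contribution, then invoke (\ref{gphi}) to trade $g$ for $\phi$, (\ref{H})--(\ref{H'}) to absorb the power of $\frac{z}{z-1}$ into $H$ or $\wt H$, and (\ref{E'})/(\ref{E+E-}) together with (\ref{phi'}) to convert the trigonometric factors into $\wt E$ and switch between $\phi$ and $\wt\phi$. The only cosmetic difference is that the paper proves the on-line identities (\ref{lem-EH1})--(\ref{lem-EH2}) first and you do the off-line ones first, but the substitutions and auxiliary relations used are identical.
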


\begin{proof}
  Combining (\ref{E}) and (\ref{G}) gives
  $$Ee^{-G}=e^{-ng}\left(\frac {z-1}z\right)^{\frac{1-\b}2}\ \prod\limits_{k=0}^{n-1}(z-X_k),\indent z\in\bb{C_\pm}.$$
  Therefore, we have
  $$E_+E_-e^{-(G_++G_-)}=e^{-n(g_++g_-)}\left(\frac {1-x}x\right)^{1-\b}\ \prod\limits_{k=0}^{n-1}(x-X_k)^2,\indent x\in(0,1).$$
  Imposing (\ref{E+E-}) and (\ref{H'}), we then obtain
  $$\cfrac{4\sin^2(n\pi x-\b\pi/2)}{W\exp(G_++G_--nv-nl)}\prod\limits_{k=0}^{n-1}(x-X_k)^{-2}
  =\frac{e^{-n(g_++g_--v-l)}}{\wt H\wt E^2},\indent x\in(0,1).$$
  Therefore, the equality (\ref{lem-EH1}) follows from (\ref{phi'}) and (\ref{gphi}).

  For $x>1$, combining (\ref{G}) and (\ref{E}) yields
  $$e^{G_++G_--nv-nl}/E^2=e^{n(g_++g_--v-l)}\left(\frac x{x-1}\right)^{1-\b}\ \prod\limits_{k=0}^{n-1}(x-X_k)^{-2}.$$
  Imposing (\ref{gphi}) and (\ref{H}), we then obtain (\ref{lem-EH2}) immediately.

  For $z\in\bb{C_\pm}$, combining (\ref{G}), (\ref{gphi}) and (\ref{E}) gives
  \begin{eqnarray}\label{lem-EH01}
  \cfrac{e^{\pm i\pi(nz-\b/2)}We^{2G-nv-nl}}{2i\sin(n\pi z-\b\pi/2)}\prod\limits_{k=0}^{n-1}(z-X_k)^2
  =\cfrac{e^{-2n\phi\pm i\pi(nz-\b/2)}WE^2}{2i\sin(n\pi z-\b\pi/2)}\left(\frac z{z-1}\right)^{1-\b}.
  \end{eqnarray}
  From (\ref{phi'}), we have
  \begin{eqnarray}\label{lem-EH02}
    e^{-2n\phi}=e^{-2n\wt\phi\pm2in\pi(1-z)}.
  \end{eqnarray}
  From (\ref{H}) and (\ref{H/H'}), we have
  \begin{eqnarray}\label{lem-EH03}
    W\left(\frac z{z-1}\right)^{1-\b}=H=-\wt He^{\pm i\pi\b}.
  \end{eqnarray}
  From (\ref{E'}), we have
  \begin{eqnarray}\label{lem-EH04}
    \cfrac{E}{2i\sin(n\pi z-\b\pi/2)}=\mp\wt E e^{\pm i\pi(nz-\b/2)}.
  \end{eqnarray}
  Therefore, applying (\ref{lem-EH02})-(\ref{lem-EH04}) to (\ref{lem-EH01}) gives (\ref{lem-EH3}).

  For $z\in\bb{C_\pm}$, combining (\ref{G}), (\ref{gphi}) and (\ref{E}) gives
  $$
  \cfrac{-2i\sin(n\pi z-\b\pi/2)}{e^{\mp i\pi(nz-\b/2)}We^{2G-nv-nl}}\prod\limits_{k=0}^{n-1}(z-X_k)^{-2}
  =\cfrac{-2i\sin(n\pi z-\b\pi/2)}{e^{-2n\phi\mp i\pi(nz-\b/2)}WE^2}\left(\frac {z-1}z\right)^{1-\b}.
  $$
  Hence, it is easy to obtain (\ref{lem-EH4}) using (\ref{E'}) and (\ref{H}).
\end{proof}

Now, we come back to the transformation (\ref{S}).
It is easily seen from (R1) and (\ref{G+-}) that the matrix-valued function $S(z)$ is analytic in $\bb{C}\setminus\S_R$.
Let $\S_S:=\S_R$ be the oriented contour depicted in Figure \ref{fig-Q2R}.
We calculate the jump matrices for $S(z)$ in the following proposition.

\begin{prop}\label{prop-JoS}
On the contour $\S_S$, the jump matrix $J_S(z):=S_-(z)^{-1}S_+(z)$ has the following explicit expressions.
For $0<x<a$, we have
\begin{eqnarray}\label{JoS11}
J_S(x)=\left(\begin{matrix}
    1&0\\
    \\
    \cfrac{e^{2n\wt\phi}}{\wt H\wt E^2}&1
  \end{matrix}\right).
\end{eqnarray}
For $a<x<1$, we have
\begin{eqnarray}\label{JoS12}
J_S(x)=\left(\begin{matrix}
    e^{-2n\wt\phi_-}&0\\
    \\
    \cfrac{1}{\wt H\wt E^2}&e^{-2n\wt\phi_+}
  \end{matrix}\right).
\end{eqnarray}
For $1<x<b$, we have
\begin{eqnarray}\label{JoS13}
J_S(x)=\left(\begin{matrix}
    e^{2n\phi_+}&- H E^{2}\\
    \\
    0&e^{2n\phi_-}
  \end{matrix}\right).
\end{eqnarray}
For $x>b$, we have
\begin{eqnarray}\label{JoS14}
J_S(x)=\left(\begin{matrix}
    1&\cfrac{- H E^{2}}{e^{2n\phi}}\\
    \\
    0&1
  \end{matrix}\right).
\end{eqnarray}
For $z=1+i\im z$ with $\im z\in(0,\pm\d)$, we have
\begin{eqnarray}\label{JoS21}
J_S(z)=\left(\begin{matrix}
    {E}/{\wt E}&\cfrac{\pm\wt H \wt EE}{e^{2n\wt\phi}}\\
    \\
    \cfrac{e^{2n\phi}}{\pm H \wt EE}&1
  \end{matrix}\right).
\end{eqnarray}
For $z\in (0,\pm i\d)\cup(\pm i\d, 1\pm i\d)$, we have
\begin{eqnarray}\label{JoS22}
J_S(z)=\left(\begin{matrix}
    1&0\\
    \\
    \cfrac{e^{2n\phi}}{\mp H \wt EE}&1
  \end{matrix}\right).
\end{eqnarray}
For $z=\re z\pm i\d$ with $\re z\in (1,\infty)$, we have
\begin{eqnarray}\label{JoS23}
J_S(z)=\left(\begin{matrix}
    1&\cfrac{\pm\wt H \wt EE}{e^{2n\wt\phi}}\\
    \\
    0&1
  \end{matrix}\right).
\end{eqnarray}
The jump conditions of $S(z)$ on the contour $\S_S$ are illustrated in Figure \ref{fig-JoS}.
\end{prop}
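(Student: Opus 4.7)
The proof is a direct bookkeeping computation. The plan is first to observe that the constant matrix $e^{(-nl/2)\sigma_3}$ in (\ref{S}) does not contribute to the jump, so
\begin{equation*}
J_S(z) = e^{(G_-(z)-nl/2)\sigma_3}\, J_R(z)\, e^{(-G_+(z)+nl/2)\sigma_3}.
\end{equation*}
Since both conjugating factors are diagonal, this is simply a rescaling of the entries of $J_R$: if $J_R = \begin{pmatrix} A & B \\ C & D \end{pmatrix}$, then the entries of $J_S$ are $Ae^{G_- - G_+}$, $Be^{(G_++G_-)-nl}$, $Ce^{-(G_++G_-)+nl}$, and $De^{G_+ - G_-}$. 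I would then substitute the various pieces of $J_R$ given in Proposition \ref{prop-JoR} and apply Lemma \ref{lem-EH} to convert each entry into the forms asserted in the proposition.

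For the segments of $\Sigma_S$ off the real line, the function $G$ is analytic, so $G_+ = G_- = G$, and only the off-diagonal entries of $J_R$ get rescaled (by $e^{\pm(2G-nl)}$). On the vertical piece $z = 1 + i\im z$ with $\im z \in (0,\pm\delta)$, I would substitute (\ref{JoR1}) for $J_R$: the $(1,1)$ entry $1 - e^{\pm 2i\pi(nz-\beta/2)}$ equals $E/\tilde E$ by (\ref{E/E'}), while the $(1,2)$ and $(2,1)$ entries are rewritten via (\ref{lem-EH3}) and (\ref{lem-EH4}) respectively, yielding (\ref{JoS21}). On the vertical pieces with $\re z \in (0,1)$, the lower-triangular jump in (\ref{JoR3}) is converted directly by (\ref{lem-EH4}) into (\ref{JoS22}), absorbing the explicit minus sign built into that identity; on the pieces with $\re z > 1$, the upper-triangular jump in (\ref{JoR3}) together with (\ref{lem-EH3}) gives (\ref{JoS23}).

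For the four real-line intervals $(0,a)$, $(a,1)$, $(1,b)$ and $(b,\infty)$, I would read off $G_+ - G_-$ from (\ref{G+-}) and take the appropriate piece of $J_R$ from (\ref{JoR2}). The key simplifications come from Proposition \ref{prop-phi}: $\tilde\phi_+ = \tilde\phi_-$ on $(0,a)$ and $\phi_+ = \phi_-$ on $(b,\infty)$, together with $\phi_+ + \phi_- = 0$ on $(a,b)$ and $\tilde\phi_+ + \tilde\phi_- = 0$ on $(a,1)$. These flatten the diagonal rescalings to $1$ on the outer intervals and to $e^{\pm 2n\phi_\pm}$ or $e^{\pm 2n\tilde\phi_\pm}$ on the inner intervals, and they cause the factors $e^{n(\tilde\phi_++\tilde\phi_-)}$ and $e^{n(\phi_++\phi_-)}$ appearing on the right-hand side of Lemma \ref{lem-EH} to collapse. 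Combined with (\ref{lem-EH1}) on $(0,1)$ and (\ref{lem-EH2}) on $(1,\infty)$, the formulas (\ref{JoS11})--(\ref{JoS14}) follow directly.

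The only genuine obstacle is careful bookkeeping of signs and branches: one must consistently track the $\pm$ choices in the upper versus lower half-plane side of each contour piece, and verify that the factors $e^{\pm i\pi(nz-\beta/2)}$, $e^{\pm i\pi(1-z)}$ and $e^{\pm i\pi\beta}$ introduced by the identities of Lemma \ref{lem-EH} cancel cleanly against those coming from $J_R$. Beyond this, no new analytical ideas are required; all the algebraic content has already been established in Sections 4--5 and encapsulated in Lemma \ref{lem-EH}.
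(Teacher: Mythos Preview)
Your proposal is correct and follows essentially the same approach as the paper: derive the conjugation formula $J_S = e^{(G_- - nl/2)\sigma_3} J_R\, e^{(-G_+ + nl/2)\sigma_3}$, then plug in the pieces of $J_R$ from Proposition~\ref{prop-JoR}, use Lemma~\ref{lem-EH} for the off-diagonal entries, and use (\ref{G+-}) together with (\ref{phi+-})--(\ref{phi'+-}) for the diagonal rescalings on the real line. The paper presents exactly this computation, with the same ingredients and in essentially the same order.
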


\begin{figure}[htp]
\centering
\includegraphics{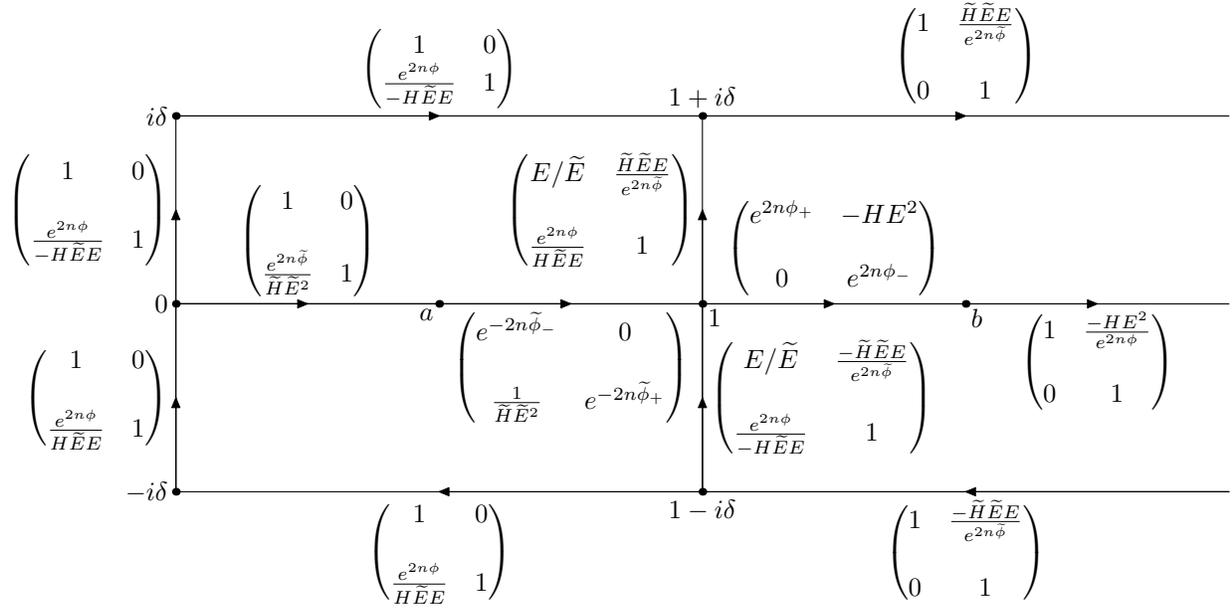}
\caption{The jump conditions of $S(z)$ on the contour $\S_S$.}\label{fig-JoS}
\end{figure}

\begin{proof}
  From (\ref{S}), we have
  \begin{eqnarray}\label{JoS-pf1}
  J_S(z)=e^{(G_-(z)-nl/2)\s_3}J_R(z)e^{(-G_+(z)+nl/2)\s_3}.
  \end{eqnarray}
  Combining (\ref{JoR2}), (\ref{lem-EH1}), (\ref{lem-EH2}) and (\ref{JoS-pf1}) implies
  \begin{subequations}\label{JoS-pf2}
    \begin{equation}
      J_S(x)=\left(\begin{matrix}
    e^{G_--G_+}&0\\
    \\
    \cfrac{e^{n(\wt \phi_++\wt\phi_-)}}{\wt H\wt E^2}&e^{G_+-G_-}
  \end{matrix}\right)
    \end{equation}
    for $x\in(0,1)$, and
    \begin{equation}
      J_S(x)=\left(\begin{matrix}
    e^{G_--G_+}&\cfrac{-HE^2}{e^{n(\phi_++\phi_-)}}\\
    \\
    0&e^{G_+-G_-}
  \end{matrix}\right)
    \end{equation}
    for $x\in(1,\infty)$.
  \end{subequations}
  Applying (\ref{phi+-}), (\ref{phi'+-}) and (\ref{G+-}) to (\ref{JoS-pf2}) gives
  (\ref{JoS11})-(\ref{JoS14}) immediately.

  Recall that the function $G(z)$ is analytic in $\bb{C}\setminus[a,b]$.
  A combination of (\ref{JoR1}), (\ref{JoR3}), (\ref{E/E'}), (\ref{lem-EH3}), (\ref{lem-EH4}) and (\ref{JoS-pf1}) gives
  (\ref{JoS21})-(\ref{JoS23}) immediately.
\end{proof}

\begin{prop}\label{prop-S}
The matrix-valued function $S(z)$ defined in (\ref{S}) is the unique solution of the
following Riemann-Hilbert problem:
\begin{enumerate}[(S1)]
\item  $S(z)$ is analytic in
  $\mathbb{C}\setminus \S_S$:
\item for $z\in\S_S$,
  $
    S_+(z)=S_-(z)J_S(z),
  $ where $J_S(z)$ is given in Proposition \ref{prop-JoS};
\item for $z\in\mathbb{C}\setminus \S_S$, $S(z)=I+O(|z|^{-1})$ as $z\to\infty$.
\end{enumerate}
\end{prop}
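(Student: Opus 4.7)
The strategy is to verify (S1)--(S3) directly from the definition (\ref{S}), feeding in what has already been proved about $R(z)$, $G(z)$, $\phi$, and $\wt\phi$; the uniqueness will be another Liouville argument of the type used in Propositions \ref{prop-P} and \ref{prop-Q}.

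For (S1), I would observe that $R(z)$ is analytic in $\bb{C}\setminus\S_R$ by (R1), and that the function $G(z)$ defined in (\ref{G}), while a priori analytic only in $\bb{C}\setminus(-\infty,b]$, extends analytically across $(-\infty,a)$ as noted after (\ref{G+-}). Hence the conjugating exponential $e^{(-G(z)+nl/2)\s_3}$ is analytic off $[a,b]\subset[0,\infty)\subset\S_R=\S_S$, and $S(z)$ inherits analyticity in $\bb{C}\setminus\S_S$ from $R(z)$. Item (S2) needs no new work: the jump relation $J_S(z)=e^{(G_-(z)-nl/2)\s_3}J_R(z)e^{(-G_+(z)+nl/2)\s_3}$ has already been computed piece by piece in Proposition \ref{prop-JoS}. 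For (S3), I would combine (R3), i.e.\ $R(z)=I+O(|z|^{-1})$, with the fact stated just below (\ref{G}) that $G(z)=O(|z|^{-1})$ as $z\to\infty$; since the diagonal factors commute,
$$S(z)=e^{-(nl/2)\s_3}\bigl(I+O(|z|^{-1})\bigr)e^{(-G(z)+nl/2)\s_3}=e^{-G(z)\s_3}+O(|z|^{-1})=I+O(|z|^{-1}),$$
the final equality coming from Taylor-expanding the exponential about $G=0$.

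For uniqueness, the key input is $\det J_S(z)\equiv1$ on every component of $\S_S$. This is immediate on $(0,a)$, $(b,\infty)$, and the pieces off the real axis where $J_S$ is triangular; on $(a,1)$ and $(1,b)$ it uses the diagonal cancellations $\wt\phi_++\wt\phi_-=0$ and $\phi_++\phi_-=0$ from (\ref{phi'+-}) and (\ref{phi+-}); on the vertical segment through $z=1$ it requires combining (\ref{E/E'}), (\ref{H/H'}), and the identity $\wt\phi=\phi\pm i\pi(1-z)$ from (\ref{phi'}). Once $\det J_S\equiv1$, the determinant $\det S$ continues across $\S_S$ to an entire function tending to $1$ at infinity, and so equals $1$; comparing $S$ to any second solution $\wt S$ then gives $S\wt S^{-1}\equiv I$ by Liouville. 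Everything in this proposition is essentially bookkeeping; the one step that calls for any real computation is the determinant check on the vertical segment at $z=1$, where two transcendental contributions must cancel delicately.
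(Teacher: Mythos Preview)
Your proof is correct and follows the same approach as the paper's own proof, which is very terse (it simply points to (R1), Proposition \ref{prop-JoS}, (R3), and Liouville's theorem). You have expanded each step appropriately --- in particular, the paper does not spell out the determinant computation needed for uniqueness, whereas you correctly identify that the only nontrivial piece is the vertical segment at $z=1$ and indicate precisely which identities (\ref{E/E'}), (\ref{H/H'}), (\ref{phi'}) combine to give $\det J_S=1$ there.
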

\begin{proof}
  The analyticity condition (S1) is clear from the definition of $S(z)$ in (\ref{S}),
  and from the analyticity condition (R1) of $R(z)$ in Proposition \ref{prop-R}.
  The jump condition (S2) is proved in Proposition \ref{prop-JoS}.
  Furthermore, the normalization condition (R3) of $R(z)$ in Proposition \ref{prop-R} gives (S3).
  The uniqueness is again a direct consequence of Liouville's theorem.
\end{proof}

\section{The Final Transformation $S\to T$}
For $a<x<1$, we can factorize the jump matrix $J_S(x)$ in  (\ref{JoS12}) as below
\begin{eqnarray*}
  J_S(x)
  =\left(\begin{matrix}
    \wt E\ &\cfrac{\wt H\wt E}{e^{2n\wt\phi_-}}\\
    \\
    0&1/{\wt E }
  \end{matrix}\right)\left(\begin{matrix}
    0&-\wt H\\
    1/\wt H&0
  \end{matrix}\right)
  \left(\begin{matrix}
    1/{\wt E }\ &\cfrac{\wt H\wt E}{e^{2n\wt\phi_+}}\\
    \\
    0&\wt E
  \end{matrix}\right),
\end{eqnarray*}
where we have used (\ref{phi'+-}). Similarly, by using (\ref{phi+-}),
for $1<x<b$ we can factorize the jump matrix $J_S(x)$ in (\ref{JoS13}) as below
\begin{eqnarray*}
  J_S(x)=\left(\begin{matrix}
    E &0\\
    \\
    \cfrac{e^{2n\phi_-}}{-H E}\ &1/E
  \end{matrix}\right)\left(\begin{matrix}
    0&- H\\
    1/ H&0
  \end{matrix}\right)\left(\begin{matrix}
    1/E &0\\
    \\
    \cfrac{e^{2n\phi_+}}{-H E}\ &E
  \end{matrix}\right).
\end{eqnarray*}
This suggests the final transformation $S\to T$ defined by
\begin{subequations}\label{T}
  \begin{equation}\label{T1}
    T(z):=S(z)\wt E ^{\s_3}
  \end{equation}
  for $z\in\Omega_{T,\pm}^1$, and
  \begin{equation}
    T(z):=S(z)\left(\begin{matrix}
     \wt E\ &\cfrac{\mp\wt H \wt E}{e^{2n\wt\phi}}\\
     \\
    0&1/\wt E
  \end{matrix}\right)
  \end{equation}
  for $z\in\Omega_{T,\pm}^2$, and
  \begin{equation}
    T(z):=S(z)\left(\begin{matrix}
     E &0\\
    \\
    \cfrac{e^{2n\phi}}{\pm H E }\ &1/E
  \end{matrix}\right)
  \end{equation}
  for $z\in\Omega_{T,\pm}^3$, and
  \begin{equation}
    T(z):=S(z) E ^{\s_3}
  \end{equation}
  for $z\in\Omega_{T,\pm}^4\cup\Omega_T^\infty$,
\end{subequations}
where the domain $\Omega_T=\Omega_{T,\pm}^1\cup\cdots\cup\Omega_{T,\pm}^4\cup\Omega_T^\infty$ is depicted in Figure \ref{fig-SoT}.
For easy reference, we have used Figure \ref{fig-S2T} to illustrate the definition of the transformation $S\to T$.
\begin{figure}[htp]
\centering
\includegraphics{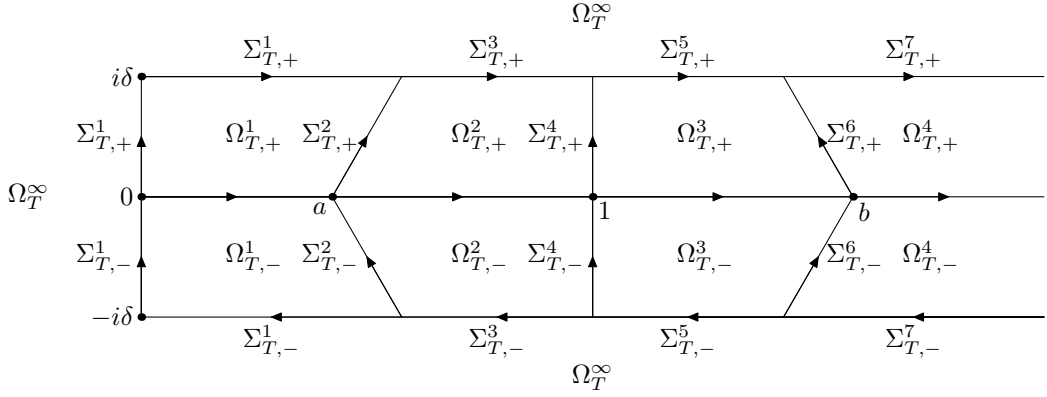}
\caption{The region $\Omega_T$ and
 the contour $\S_T$.
}\label{fig-SoT}
\end{figure}

\begin{figure}[htp]
\centering
\includegraphics{S2T.eps}
\caption{The transformation $S\to T$.}\label{fig-S2T}
\end{figure}

\noindent The following proposition gives the jump conditions of $T(z)$ on the contour $\S_T$,
where $\S_T=\S_{T,\pm}^1\cup\cdots\cup\S_{T,\pm}^7\cup(0,\infty)$; see Figure \ref{fig-SoT}.
\begin{prop}\label{prop-JoT}
On the contour $\S_T$, the jump matrix $J_T(z):=T_-(z)^{-1}T_+(z)$ can be calculated as below.
For $z\in\S_{T,\pm}^4$, we have
\begin{eqnarray}\label{JoT4}
  J_T(z)=\left(\begin{matrix}
     {E}&0\\
    \\
    \cfrac{e^{2n\phi}}{\pm H E }\ &{1}/{E }
  \end{matrix}\right)^{-1}\left(\begin{matrix}
    {E}/{\wt E}&\cfrac{\pm\wt H \wt EE}{e^{2n\wt\phi}}\\
    \\
    \cfrac{e^{2n\phi}}{\pm H \wt EE}&1
  \end{matrix}\right)\left(\begin{matrix}
     \wt E\ &\cfrac{\mp\wt H \wt E}{e^{2n\wt\phi}}\\
     \\
    0&1/{\wt E}
  \end{matrix}\right)=I.
\end{eqnarray}
For $z\in\S_{T,\pm}^3$, we have
\begin{eqnarray}\label{JoT3}
  J_T(z)=\left(\begin{matrix}
     \wt E&\cfrac{\mp\wt H \wt E}{e^{2n\wt\phi}}\\
     \\
    0&1/{\wt E}
  \end{matrix}\right)^{-1}
    \left(\begin{matrix}
    1&0\\
    \\
    \cfrac{e^{2n\phi}}{\mp H \wt EE}&1
  \end{matrix}\right)E(z)^{\s_3}=\left(\begin{matrix}
    1&\cfrac{\pm\wt H \wt E}{e^{2n\wt\phi}E}\\
    \\
    \cfrac{e^{2n\phi}}{\mp H}&{\wt E}/{E}
  \end{matrix}\right).
\end{eqnarray}
For $z\in\S_{T,\pm}^5$, we have
\begin{eqnarray}\label{JoT5}
  J_T(z)=\left(\begin{matrix}
     E&0\\
    \\
    \cfrac{e^{2n\phi}}{\pm H E }&1/E
  \end{matrix}\right)^{-1}\left(\begin{matrix}
    1&\cfrac{\pm\wt H \wt EE}{e^{2n\wt\phi}}\\
    \\
    0&1
  \end{matrix}\right)E(z)^{\s_3}=\left(\begin{matrix}
    1&\cfrac{\pm\wt H \wt E}{e^{2n\wt\phi}E}\\
    \\
    \cfrac{e^{2n\phi}}{\mp H}&{\wt E}/{E}
  \end{matrix}\right).
\end{eqnarray}
For $z\in\S_{T,\pm}^1$, we have
\begin{eqnarray}\label{JoT1}
  J_T(z)=\wt E(z)^{-\s_3}
  \left(\begin{matrix}
    1&0\\
    \\
    \cfrac{e^{2n\phi}}{\mp H \wt EE}&1
  \end{matrix}\right) E(z)^{\s_3}=\left(\begin{matrix}
    E/\wt E&0\\
    \\
    \cfrac{e^{2n\phi}}{\mp H}&\wt E/E
  \end{matrix}\right).
\end{eqnarray}
For $z\in\S_{T,\pm}^7$, we have
\begin{eqnarray}\label{JoT7}
  J_T(z)=E(z)^{-\s_3}
  \left(\begin{matrix}
    1&\cfrac{\pm\wt H \wt EE}{e^{2n\wt\phi}}\\
    \\
    0&1
  \end{matrix}\right)E(z)^{\s_3}=\left(\begin{matrix}
    1&\cfrac{\pm\wt H\wt E}{e^{2n\wt\phi}E}\\
    \\
    0&1
  \end{matrix}\right).
\end{eqnarray}
On the positive real line, we have
\begin{subequations}\label{JoT0}
  \begin{equation}
    J_T(x)=\left(\begin{matrix}
    1&0\\
    \\
    \cfrac{e^{2n\wt\phi}}{\wt H}&1
  \end{matrix}\right)
  \end{equation}
  for $0<x<a$, and
  \begin{equation}
    J_T(x)=\left(\begin{matrix}
    0&-\wt H \\
    1/\wt H&0
  \end{matrix}\right)
  \end{equation}
  for $a<x<1$, and
  \begin{equation}
    J_T(x)=\left(\begin{matrix}
    0&- H \\
    1/ H&0
  \end{matrix}\right)
  \end{equation}
  for $1<x<b$, and
  \begin{equation}
    J_T(x)=\left(\begin{matrix}
    1&\cfrac{- H}{e^{2n\phi}} \\
    \\
    0&1
  \end{matrix}\right)
  \end{equation}
  for $x>b$.
\end{subequations}
Furthermore, we have
\begin{subequations}\label{JoT2/6}
  \begin{equation}
    J_T(z)=\left(\begin{matrix}
    1&\cfrac{\pm\wt H}{e^{2n\wt\phi}}\\
    \\
    0&1
  \end{matrix}\right)
  \end{equation}
  for $z\in \S_{T,\pm}^2$, and
  \begin{equation}
    J_T(z)=\left(\begin{matrix}
     1&0\\
    \\
    \cfrac{e^{2n\phi}}{\pm H}&1
  \end{matrix}\right)
  \end{equation}
  for $z\in \S_{T,\pm}^6$.
\end{subequations}
The jump conditions of $T(z)$ on the contour $\S_T$ are illustrated in Figure \ref{fig-JoT}.
\end{prop}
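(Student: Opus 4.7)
The plan is a case-by-case mechanical verification based on the following general principle: if $T(z) = S(z) K_\pm(z)$ on the two sides of a contour $\Gamma\subseteq\S_T$, with $K_\pm$ analytic in a one-sided neighborhood of $\Gamma$, then
\[
J_T(z) = K_-(z)^{-1} J_S(z) K_+(z),
\]
where $J_S = I$ when $\Gamma$ is a lensing contour newly introduced by $(\ref{T})$ and $J_S$ is taken from Proposition \ref{prop-JoS} when $\Gamma\subseteq\S_S$. I would read off the local definition of $K_\pm$ from $(\ref{T})$ for each piece of $\S_T$ and plug in.

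For the real-line pieces listed in $(\ref{JoT0})$, the argument is exactly the motivation given just before $(\ref{T})$: the factorizations of $J_S|_{(a,1)}$ and $J_S|_{(1,b)}$ were chosen so that the triangular outer factors are absorbed into $T$ on each side, leaving only the middle anti-diagonal factor. On $(0,a)$ and $(b,\infty)$ the multiplier $K_\pm$ is a single-valued $\wt E^{\s_3}$ or $E^{\s_3}$ on both sides, so $J_T$ reduces to the scalar conjugate of $(\ref{JoS11})$ or $(\ref{JoS14})$, giving the stated triangular form.

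The off-axis identities $(\ref{JoT1})$, $(\ref{JoT3})$, $(\ref{JoT5})$, $(\ref{JoT7})$ and $(\ref{JoT2/6})$ are direct matrix multiplications. On $\S_{T,\pm}^2$ and $\S_{T,\pm}^6$ one has $J_S = I$ (the contours lie inside a single $S$-region), so only the $T$-multipliers are conjugated, yielding $(\ref{JoT2/6})$ at once. On $\S_{T,\pm}^1, \S_{T,\pm}^7, \S_{T,\pm}^3, \S_{T,\pm}^5$ the $\wt E$ or $E$ factors introduced by $T$ cancel the $\wt E E$ appearing in the off-diagonal entries of $(\ref{JoS22})$--$(\ref{JoS23})$, leaving the asserted triangular or mixed matrices. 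The only algebraic inputs needed are the definitions $(\ref{E'})$, $(\ref{H'})$ and the relation $\phi = \wt\phi \mp i\pi(1-z)$ from $(\ref{phi'})$.

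The main obstacle is $(\ref{JoT4})$, where one must show $J_T = I$ on $\S_{T,\pm}^4$, since this is the whole point of the lensing: the vertical segment at $x=1$ must be removable from the effective contour. Here I would compute the triple product
\[
\begin{pmatrix} E & 0 \\ e^{2n\phi}/(\pm H E) & 1/E \end{pmatrix}^{-1} \begin{pmatrix} E/\wt E & \pm\wt H \wt E E / e^{2n\wt\phi} \\ e^{2n\phi}/(\pm H \wt E E) & 1 \end{pmatrix} \begin{pmatrix} \wt E & \mp \wt H \wt E / e^{2n\wt\phi} \\ 0 & 1/\wt E \end{pmatrix}
\]
entry by entry, using $(\ref{E/E'})$ to rewrite $E/\wt E = 1 - e^{\pm 2i\pi(nz-\b/2)}$, $(\ref{H/H'})$ to pass between $\wt H$ and $H$ via the factor $e^{\pm i\pi\b}$, and $(\ref{phi'})$ to convert $e^{\pm 2n\phi}$ into $e^{\pm 2n\wt\phi}e^{\pm 2i\pi n(1-z)}$. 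These three identities are precisely what makes every entry collapse to that of $I$; verifying this cancellation is the only step requiring real care, since all other cases reduce to routine $2\times 2$ multiplications.
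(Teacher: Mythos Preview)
Your proposal is correct and follows essentially the same route as the paper. The only cosmetic difference is that the paper packages the three relations you cite --- $(\ref{E/E'})$, $(\ref{H/H'})$, $(\ref{phi'})$ --- into the single scalar identity
\[
\frac{-e^{2n(\phi-\wt\phi)}\wt H}{HE}+\frac{1}{\wt E}=\frac{1}{E},
\]
which it then invokes both for the $\S_{T,\pm}^4$ cancellation and for the second equalities in $(\ref{JoT3})$ and $(\ref{JoT5})$; your entry-by-entry plan amounts to the same computation.
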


\begin{figure}[hbtp]
\centering
\includegraphics{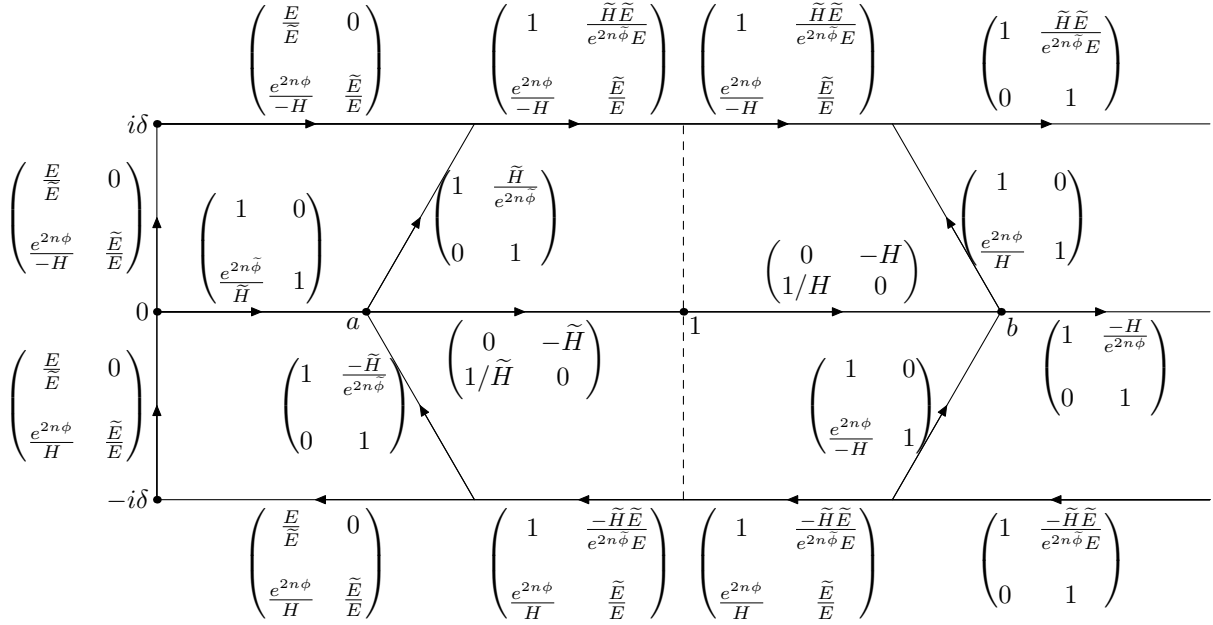}
\caption{
The jump conditions of $T(z)$.
The dashed line means that there is actually no
\newline \leftline{\hspace{1.7cm}
jump on this line.}
}\label{fig-JoT}
\end{figure}

\begin{proof}
  For $z\in\S_{T,\pm}^4$, we obtain from (\ref{JoS21}) and (\ref{T})
  \begin{eqnarray}\label{JoT41}
  J_T(z)=\left(\begin{matrix}
     {E}&0\\
    \\
    \cfrac{e^{2n\phi}}{\pm H E }&{1}/{E }
  \end{matrix}\right)^{-1}\left(\begin{matrix}
    {E}/{\wt E}&\cfrac{\pm\wt H \wt EE}{e^{2n\wt\phi}}\\
    \\
    \cfrac{e^{2n\phi}}{\pm H \wt EE}&1
  \end{matrix}\right)\left(\begin{matrix}
     \wt E&\cfrac{\mp\wt H \wt E}{e^{2n\wt\phi}}\\
     \\
    0&1/{\wt E}
  \end{matrix}\right).
  \end{eqnarray}
  A combination of (\ref{H/H'}), (\ref{E/E'}) and (\ref{phi'}) gives
  \begin{eqnarray}\label{JoT00}
  \cfrac{-e^{2n(\phi-\wt\phi)}\wt H}{HE}+\cfrac{1}{\wt E}=\cfrac{1}{E}.
  \end{eqnarray}
  Therefore, we have
  \begin{eqnarray}\label{JoT42}
  \left(\begin{matrix}
    {E}/{\wt E}&\cfrac{\pm\wt H \wt EE}{e^{2n\wt\phi}}\\
    \\
    \cfrac{e^{2n\phi}}{\pm H \wt EE}&1
  \end{matrix}\right)\left(\begin{matrix}
     \wt E&\cfrac{\mp\wt H \wt E}{e^{2n\wt\phi}}\\
     \\
    0&1/{\wt E}
  \end{matrix}\right)=\left(\begin{matrix}
     {E}&0\\
    \\
    \cfrac{e^{2n\phi}}{\pm H E }&{1}/{E }
  \end{matrix}\right).
  \end{eqnarray}
  Coupling (\ref{JoT41}) and (\ref{JoT42}) yields (\ref{JoT4}).

  For $z\in\S_{T,\pm}^3$, by applying (\ref{JoS22}) to (\ref{T}) we obtain
  \begin{eqnarray*}
  J_T(z)=\left(\begin{matrix}
     \wt E&\cfrac{\mp\wt H \wt E}{e^{2n\wt\phi}}\\
     \\
    0&1/{\wt E}
  \end{matrix}\right)^{-1}
    \left(\begin{matrix}
    1&0\\
    \\
    \cfrac{e^{2n\phi}}{\mp H \wt EE}&1
  \end{matrix}\right)E(z)^{\s_3}=\left(\begin{matrix}
    1&\cfrac{\pm\wt H \wt E}{e^{2n\wt\phi}E}\\
    \\
    \cfrac{e^{2n\phi}}{\mp H}&{\wt E}/{E}
  \end{matrix}\right).
  \end{eqnarray*}
  Here, we have made use of (\ref{JoT00}) in the second equality.
  Thus, formula (\ref{JoT3}) is proved.

  Similarly, by applying (\ref{JoS23}) and (\ref{JoT00}) to (\ref{T})
  we obtain (\ref{JoT5}).

  Moreover, applying (\ref{JoS22}) and (\ref{JoS23}) to (\ref{T}) gives, respectively, (\ref{JoT1}) and (\ref{JoT7}).

  For $0<x<a$, we obtain from (\ref{JoS11}) and (\ref{T})
  $$
  J_T(x)=\wt E(x)^{-\s_3}\left(\begin{matrix}
    1&0\\
    \\
    \cfrac{e^{2n\wt\phi}}{\wt H\wt E^2}&1
  \end{matrix}\right)\wt E(x)^{\s_3}=\left(\begin{matrix}
    1&0\\
    \\
    \cfrac{e^{2n\wt\phi}}{\wt H}&1
  \end{matrix}\right).
  $$
  For $x>b$, we obtain from (\ref{JoS14}) and (\ref{T})
  $$
  J_T(x)=E(x)^{-\s_3}\left(\begin{matrix}
    1&\cfrac{- HE^2}{e^{2n\phi}} \\
    \\
    0&1
  \end{matrix}\right)E(x)^{\s_3}=\left(\begin{matrix}
    1&\cfrac{- H}{e^{2n\phi}} \\
    \\
    0&1
  \end{matrix}\right).
  $$
  Similarly, for $a<x<1$, by applying (\ref{JoS12}) to (\ref{T}) we obtain
  $$
  J_T(x)=\left(\begin{matrix}
    \wt E &\cfrac{\wt H\wt E}{e^{2n\wt\phi_-}}\\
    \\
    0&1/{\wt E }
  \end{matrix}\right)^{-1}\left(\begin{matrix}
    e^{-2n\wt\phi_-}&0\\
    \\
    \cfrac{1}{\wt H\wt E^2}&e^{-2n\wt\phi_-}
  \end{matrix}\right)
  \left(\begin{matrix}
    \wt E &\cfrac{-\wt H\wt E}{e^{2n\wt\phi_+}}\\
    \\
    0&1/{\wt E }
  \end{matrix}\right)=\left(\begin{matrix}
    0&-\wt H\\
    1/\wt H&0
  \end{matrix}\right).
  $$
  For $1<x<b$, by applying (\ref{JoS13}) to (\ref{T}) we obtain
  $$
  J_T(x)=\left(\begin{matrix}
    E &0\\
    \\
    \cfrac{e^{2n\phi_-}}{- H E}&1/E
  \end{matrix}\right)^{-1}\left(\begin{matrix}
    e^{2n\phi_+}&- H E^{2}\\
    \\
    0&e^{2n\phi_-}
  \end{matrix}\right)\left(\begin{matrix}
    E &0\\
    \\
    \cfrac{e^{2n\phi_+}}{H E}&1/E
  \end{matrix}\right)=\left(\begin{matrix}
    0&- H\\
    1/ H&0
  \end{matrix}\right),
  $$
  thus proving (\ref{JoT0}).
  The second equalities in the last two equations actually follow from the first two
  equations at the beginning of Section 7.

  Finally, since $S(z)$ has no jump on $\S_{T,\pm}^2$ and $\S_{T,\pm}^6$,
  we obtain (\ref{JoT2/6}) from the definition of $T(z)$ in (\ref{T}).
  This ends the proof of the proposition.
\end{proof}

\begin{prop}\label{prop-T}
The matrix-valued function $T(z)$ defined in (\ref{T}) is the unique solution of the
following Riemann-Hilbert problem:
\begin{enumerate}[(T1)]
\item $T(z)$ is analytic in
  $\mathbb{C}\setminus \S_T$;
\item for $z\in\S_T$,
  $
    T_+(z)=T_-(z)J_T(z),
  $ where $J_T(z)$ is given in Proposition \ref{prop-JoT};
\item for $z\in\mathbb{C}\setminus \S_T$, $T(z)=I+O(|z|^{-1})$ as $z\to\infty$.
\end{enumerate}
\end{prop}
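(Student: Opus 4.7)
The plan is to verify conditions (T1)--(T3) in turn, following exactly the same pattern used for Propositions \ref{prop-R} and \ref{prop-S}, and then to invoke Liouville's theorem for uniqueness.

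For (T1), I would observe that in each of the open subregions $\Omega_{T,\pm}^1,\ldots,\Omega_{T,\pm}^4,\Omega_T^\infty$ depicted in Figure \ref{fig-SoT}, the definition (\ref{T}) expresses $T(z)$ as the product of $S(z)$ with an explicit matrix whose entries are rational combinations of the auxiliary functions $E,\wt E, H,\wt H,e^{\pm 2n\phi},e^{\pm 2n\wt\phi}$. By (S1) of Proposition \ref{prop-S}, $S(z)$ is analytic in each subregion, while the multipliers are analytic there provided the fractional powers in (\ref{E}), (\ref{E'}), (\ref{H}), (\ref{H'}) and the exponents $\phi,\wt\phi$ are single-valued; this is guaranteed by the choice of $\d_0$ in Remark \ref{rem-d}. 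Consequently $T(z)$ is analytic on $\mathbb{C}\setminus\S_T$, any discontinuities being confined to the boundaries between adjacent subregions, which together form the oriented contour $\S_T$.

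Condition (T2) is precisely the content of Proposition \ref{prop-JoT}, which already carries out the algebra: on each component of $\S_T$ one conjugates the jump matrix $J_S(z)$ from (\ref{JoS11})--(\ref{JoS23}) by the defining multipliers in (\ref{T}) and simplifies using the identities (\ref{H/H'}), (\ref{E/E'}), (\ref{phi'}) and the key relation (\ref{JoT00}). A noteworthy byproduct is that $J_T\equiv I$ on $\S_{T,\pm}^4$ (equation (\ref{JoT4})), confirming that $\S_{T,\pm}^4$ is not a true jump contour and justifying its appearance as a dashed line in Figure \ref{fig-JoT}.

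For (T3), in the unbounded component $\Omega_T^\infty$ the definition (\ref{T}) reduces to $T(z)=S(z)E(z)^{\s_3}$. By the last assertion of Lemma \ref{lem-E}, $E(z)\to 1$ as $z\to\infty$ uniformly for $z$ bounded away from $[0,1]$, so $E(z)^{\s_3}=I+O(|z|^{-1})$; combined with (S3) this yields $T(z)=I+O(|z|^{-1})$. Uniqueness then follows by the same Liouville argument as in Propositions \ref{prop-R} and \ref{prop-S}: if $\wt T$ is a second solution to (T1)--(T3), the jump matrices of $T$ and $\wt T$ coincide, so $T\wt T^{-1}$ has no jumps across $\S_T$, extends to an entire function, and is forced to equal $I$ by the common normalization at infinity. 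No step presents a real obstacle; the only bookkeeping to watch is ensuring analyticity of the fractional powers on the small disks about $a$ and $b$, which is exactly what $\d_0$ was chosen for.
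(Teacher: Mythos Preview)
Your argument follows the same route as the paper's: deduce (T1) from (S1) and the explicit multipliers in (\ref{T}), read (T2) off from Proposition~\ref{prop-JoT}, get (T3) from (S3) together with the behavior of $E$ at infinity, and conclude uniqueness via Liouville. That is exactly what the paper does, only you spell out more detail.

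One small correction to your justification of (T3): the statement you invoke from Lemma~\ref{lem-E} concerns the limit $n\to\infty$ (``$E(z)\sim 1$ as $n\to\infty$, uniformly for $z$ bounded away from $[0,1]$''), not the limit $z\to\infty$ at fixed $n$, which is what (T3) requires. What you actually need is that, for each fixed $n$, $E(z)=1+O(|z|^{-1})$ as $z\to\infty$; this follows directly from the definition (\ref{E}) by expanding $\prod_{k=0}^{n-1}(z-X_k)$ and $\exp\{-n\int_0^1\log(z-x)\,dx\}$ for large $z$, and is independent of Lemma~\ref{lem-E}. With that adjustment your proof is complete.
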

\begin{proof}
  The analyticity follows from (S1) in Proposition \ref{prop-S} and the definition of $T(z)$.
  Proposition \ref{prop-JoT} gives (T2).
  Furthermore, (S3) in Proposition \ref{prop-S} yields (T3).
  The uniqueness is again an immediate consequence of Liouville's theorem.
\end{proof}

\section{Construction of Parametrix}

With the aid of Figure \ref{fig-JoT}, we observe from (\ref{E/E'}) and Propositions
\ref{prop-phi} \& \ref{prop-JoT} that as $n\to\infty$, the jump matrix
$J_T(z)$ converges exponentially fast to the identity for
$z$ bounded away from $[a,b]\cup\{0\}$.
The limiting Riemann-Hilbert problem can be divided into several local problems,
whose solutions can be constructed explicitly.
Since these solutions to the local Riemann-Hilbert problems are not unique,
we shall choose as in \cite{DKMVZ99} some specific ones,
which are asymptotically equal to each other in the overlapping regions.
By piecing them together, we build a function that is defined in the whole complex plane.
This matrix-valued function is our desired parametrix.

We first consider the Riemann-Hilbert problem:
\begin{enumerate}[(M1)]
\item $M(z)$ is analytic in
  $\mathbb{C}\setminus [a,b]$;
\item  $M(z)$ satisfies the jump conditions
  \begin{eqnarray}\label{JoM}
  \begin{cases}
    M_+(x)=M_-(x)\left(\begin{matrix}
    0&-\wt H \\
    1/\wt H&0
  \end{matrix}\right)&:\indent a<x<1,\\
    M_+(x)=M_-(x)\left(\begin{matrix}
    0&- H \\
    1/ H&0
  \end{matrix}\right)&:\indent 1<x<b;
  \end{cases}
  \end{eqnarray}
\item
  $M(z)=I+O(|z|^{-1})$, as $z\to\infty$.
\end{enumerate}
Recall that
$H(z)=[z/(z-1)]^{1-\b}W(z)$ and $\wt H(z)=[z/(1-z)]^{1-\b}W(z)$, where
$$
W(z)=\frac{2ni\pi \G(nz+\b/2)c^{-\b/2}}{\G(nz+1-\b/2)};
$$
see (\ref{W}), (\ref{H}) and (\ref{H'}).
Define
\begin{eqnarray}
  V(z):=\log\frac{\G(nz+1-\b/2)}{z^{1-\b}\G(nz+\b/2)}-\log(2ni\pi c^{-\b/2}).
  \label{V}
\end{eqnarray}
Clearly,
\begin{eqnarray}
  H(z)=(z-1)^{\b-1}e^{-V(z)},\indent \wt H(z)=(1-z)^{\b-1}e^{-V(z)}.
  \label{HV}
\end{eqnarray}
From the Stirling series \cite[(6.1.40) and (6.3.18)]{AS70}, we have
$$\log \G(z)=(z-\frac 1 2)\log z-z+\frac 1 2\log(2\pi)+O(|z|^{-1}),\indent
\frac{\G'(z)}{\G(z)}=\log z-\frac 1{2z}+O(|z|^{-2})
$$
as $z\to\infty$.
The estimate holds uniformly for $z$ bounded away from the negative
real line. Thus, we obtain the double asymptotic behavior for
$V(z)$ as $n\to\infty$ or $z\to\infty$,
\begin{eqnarray}
  \label{V-asymp}
  V(z)=-\b\log n-\log(2i\pi c^{-\b/2})+O(\frac1{n|z|}),\indent
  V'(z)=O(\frac1{n|z|^2}),
\end{eqnarray}
which again holds uniformly for $z$ bounded away from the negative real line.
For $z$ bounded away from $(-\infty,0]\cup\{1\}$,
it follows from (\ref{HV}) and (\ref{V-asymp}) that
\begin{eqnarray}
  \label{H-asymp1}
  |n^{-\b}H(z)|+|n^\b H(z)^{-1}|+|n^{-\b}\wt H(z)|+|n^\b\wt H(z)^{-1}|=O(1)
\end{eqnarray}
as $n\to\infty$.
Furthermore, for $\re z\ge0$, we have from (\ref{W}) and Stirling's formula that
$W(z)^{-1}$ is uniformly bounded as $n\to\infty$.
Thus, from (\ref{H}) and (\ref{H'}), we obtain
\begin{eqnarray}
  \label{H-asymp2}
  |H(z)^{-1}|+|\wt H(z)^{-1}|=O(1)
\end{eqnarray}
uniformly for $\re z\ge0$ and $z\ne1$.
Here, we have used the assumption $1\le\b<2$.
We remark that formula (\ref{H-asymp2}) will later be used
in the proof of Proposition \ref{prop-JoK}.
Now, we introduce the function
\begin{eqnarray}
  \label{G'}
  \wt G(z):=-\int_z^\infty\int_a^b\frac{V'(s)\sqrt{(s-a)(b-s)}}{2\pi(s-\z)\sqrt{(\z-a)(\z-b)}}dsd\z.
\end{eqnarray}
\begin{lem}
\label{lem-G'}
  The function $\wt G(z)$ defined in (\ref{G'}) is a solution to the Riemann-Hilbert problem:
  \begin{enumerate}[(G1)]
    \item $\wt G(z)$ is analytic in $\mathbb{C}\setminus [a,b]$;
    \item for $x\in(a,b)$, $\wt G(z)$ satisfies the jump condition
    \begin{eqnarray}
    \label{JoG'}
    \wt G_+(x)+\wt G_-(x)-V(x)-L=0,
    \end{eqnarray}
    where $L:=2\wt G(b)-V(b)$ is a constant independent of $x$;
    \item $\wt G(z)=O(|z|^{-1})$, as $z\to\infty$.
  \end{enumerate}
  As $n\to\infty$, we have
  \begin{eqnarray}
    \label{G'-asymp}
    \wt G(z)=O(1/n)
  \end{eqnarray}
  uniformly for $z\in\mathbb{C}$.
  Here, the value of $\wt G(x)$ at $x\in(a,b)$ takes the meaning of boundary value from the upper or lower half-plane.
  Therefore, (\ref{G'-asymp}) implies that $|\wt G_+(x)|+|\wt G_-(x)|=O(1/n)$ for $x\in(a,b)$.
  Furthermore, we have the asymptotic behavior for the constant $L$:
  \begin{eqnarray}
    \label{L-asymp}
    L=\b\log n+\log(2i\pi c^{-\b/2})+O(1/n).
  \end{eqnarray}
\end{lem}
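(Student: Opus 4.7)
The plan has four pieces, corresponding to (G1)--(G3), the uniform bound (\ref{G'-asymp}), and the asymptotic (\ref{L-asymp}) for $L$.

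First, for (G1) and (G3), denote the integrand with respect to $\z$ by $F(\z)$, so that $\wt G(z) = -\int_z^\infty F(\z)\,d\z$. Since $\sqrt{(\z-a)(\z-b)}$ has branch cut $[a,b]$ and the inner $s$-integral produces a function analytic in $\z \in \bb{C}\setminus[a,b]$, $F$ is analytic off $[a,b]$. Expanding $1/(s-\z) \sim -1/\z$ inside the $s$-integral and using $1/\sqrt{(\z-a)(\z-b)} \sim 1/\z$ gives $F(\z) = O(|\z|^{-2})$ at infinity, so the outer integral from $z$ to $\infty$ converges along any path in $\bb{C}\setminus[a,b]$ and is path-independent. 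This yields (G1) together with $\wt G(z) = O(|z|^{-1})$ as $z\to\infty$, which is (G3).

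Second, for the jump (G2) / (\ref{JoG'}), I would apply the Plemelj-Sokhotski formula to the inner Cauchy integral, combined with the branch relation $\sqrt{(\z-a)(\z-b)}_\pm = \pm i\sqrt{(x-a)(b-x)}$ on $(a,b)$. Writing $g(s) := V'(s)\sqrt{(s-a)(b-s)}/(2\pi)$, a direct computation shows
$$F_\pm(x) = \mp \frac{i}{\sqrt{(x-a)(b-x)}}\,\mathrm{P.V.}\!\int_a^b \frac{g(s)\,ds}{s-x} + \frac{V'(x)}{2},$$
so $F_+(x) + F_-(x) = V'(x)$ on $(a,b)$. Because the only endpoint singularity of $F$ at $b$ is an integrable $|\z-b|^{-1/2}$, $\wt G$ is continuous across $b$ with common boundary value $\wt G(b)$. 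Antidifferentiating $F_+ + F_- = V'$ from $b$ to $x$ along the upper and lower edges then gives $\wt G_+(x) + \wt G_-(x) - V(x) = 2\wt G(b) - V(b) = L$, which is (\ref{JoG'}).

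Third, the uniform bound (\ref{G'-asymp}) is the main technical step. By (\ref{V-asymp}), $V'(s) = O(1/n)$ on $[a,b]$. The Plemelj-Sokhotski formula shows that the inner Cauchy integral $I(\z) := \int_a^b V'(s)\sqrt{(s-a)(b-s)}/(s-\z)\,ds$ extends continuously to both sides of $[a,b]$ with boundary values of size $O(1/n)$, so $I(\z) = O(1/n)$ on compact sets and $I(\z) = O(1/(n|\z|))$ at infinity. Division by $\sqrt{(\z-a)(\z-b)}$ introduces only integrable $|\z-a|^{-1/2}$ and $|\z-b|^{-1/2}$ singularities near the endpoints and preserves $1/|\z|$ decay at infinity. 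For each $z \in \bb{C}\setminus[a,b]$ I would choose a path to infinity avoiding $[a,b]$ (for $z$ near the interval, a path skirting the nearer endpoint) and estimate $|\wt G(z)| = \int |F|$ along it, yielding $\wt G(z) = O(1/n)$ uniformly. The delicate case is $z$ close to $[a,b]$, where the endpoint square-root singularities of $F$ and the boundary-value regime of the Cauchy integral interact at once; this is where the real work lies.

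Finally, (\ref{L-asymp}) is immediate from the other pieces: $L = 2\wt G(b) - V(b)$, with $\wt G(b) = O(1/n)$ from step three and $V(b) = -\b\log n - \log(2i\pi c^{-\b/2}) + O(1/n)$ from (\ref{V-asymp}), giving $L = \b\log n + \log(2i\pi c^{-\b/2}) + O(1/n)$.
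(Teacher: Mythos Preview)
Your proposal is correct and follows essentially the same route as the paper: compute $\wt G'=F$, show $F_++F_-=V'$ on $(a,b)$ via Plemelj--Sokhotski and antidifferentiate from $b$, then use $V'=O(1/n)$ on $[a,b]$ to bound $\wt G$ and read off $L$ from $L=2\wt G(b)-V(b)$. If anything you are more careful than the paper, which simply asserts $(1+|z|^2)|\wt G'(z)|=O(1/n)$ uniformly on $\bb C$ without flagging the integrable $|\z-a|^{-1/2}$, $|\z-b|^{-1/2}$ endpoint singularities you rightly isolate.
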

\begin{proof}
From (\ref{G'}), we obtain
\begin{eqnarray}
  \label{G''}
  \wt G'(z)=\int_a^b\frac{V'(s)\sqrt{(s-a)(b-s)}ds}{2\pi(s-z)\sqrt{(z-a)(z-b)}}.
\end{eqnarray}
It is easily seen that $\wt G'(z)$ is analytic in $\mathbb{C}\setminus [a,b]$ and
$\wt G'_+(x)+\wt G'_-(x)=V'(x)$ for $x\in(a,b).$
Moreover, $\wt G'(z)=O(|z|^{-2})$ as $z\to\infty$.
Thus, (G1)-(G3) follows.

From (\ref{V-asymp}) and (\ref{G''}), we have $(1+|z|^2)|\wt G'(z)|=O(1/n)$ as $n\to\infty$.
This estimate is uniform for $z\in\mathbb{C}$. Therefore, $\wt G(z)=O(1/n)$, thus giving (\ref{G'-asymp}).

Since $L=2\wt G(b)-V(b)$,
formula (\ref{L-asymp}) follows from (\ref{V-asymp}) and (\ref{G'-asymp}).
\end{proof}
With the aid of the function $\wt G(z)$, we now solve the Riemann-Hilbert problem (M1)-(M3) explicitly.
\begin{prop}
  The Riemann-Hilbert problem (M1)-(M3) has a solution given by
  \begin{eqnarray}
  \label{M}
    M(z)=\left(\begin{matrix}
    \cfrac{(z-1)^{\frac{1-\b}2}(\frac{\sqrt{z-a}+\sqrt{z-b}}2)^\b}{(z-a)^{1/4}(z-b)^{1/4} e^{-\wt G(z)}}&
    \cfrac{-i(z-1)^{\frac{\b-1}2} (\frac{\sqrt{z-a}-\sqrt{z-b}}2)^\b}{(z-a)^{1/4}(z-b)^{1/4} e^{\wt G(z)-L}} \\
    \\
    \cfrac{i(z-1)^{\frac{1-\b}2}(\frac{\sqrt{z-a}-\sqrt{z-b}}2)^{2-\b}}{(z-a)^{1/4}(z-b)^{1/4}e^{L-\wt G(z)}}&
    \cfrac{(z-1)^{\frac{\b-1}2} (\frac{\sqrt{z-a}+\sqrt{z-b}}2)^{2-\b}}{(z-a)^{1/4}(z-b)^{1/4}e^{\wt G(z)}}
  \end{matrix}\right).
  \end{eqnarray}
\end{prop}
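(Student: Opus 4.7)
The plan is to verify directly that $M(z)$ in (\ref{M}) satisfies the three conditions (M1)--(M3). The key tools are the jump identity $\wt G_++\wt G_-=V+L$ from Lemma \ref{lem-G'}, the algebraic identity
$\big(\tfrac{\sqrt{z-a}+\sqrt{z-b}}2\big)\cdot\big(\tfrac{\sqrt{z-a}-\sqrt{z-b}}2\big)=\tfrac{b-a}4$,
and a careful fixing of branches so that every factor has its cut inside $[a,b]$.

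Conditions (M1) and (M3) are essentially bookkeeping. For (M1), I take $[(z-a)(z-b)]^{1/4}$ and $\big((\sqrt{z-a}\pm\sqrt{z-b})/2\big)^\g$ (for $\g=\b$ or $2-\b$) with cut on $[a,b]$, normalized so that $\sqrt{z-a}\sqrt{z-b}\sim z$ at infinity; I take $(z-1)^{\pm(1-\b)/2}$ with cut on $[a,1]\subset[a,b]$. By Lemma \ref{lem-G'}, $e^{\pm\wt G(z)}$ is analytic off $[a,b]$, so every entry of $M(z)$ is analytic in $\bb{C}\setminus[a,b]$. For (M3), using $(\sqrt{z-a}+\sqrt{z-b})/2=z^{1/2}(1+O(|z|^{-1}))$, $(\sqrt{z-a}-\sqrt{z-b})/2=\tfrac{b-a}{4z^{1/2}}(1+O(|z|^{-1}))$, $[(z-a)(z-b)]^{1/4}=z^{1/2}(1+O(|z|^{-1}))$, $(z-1)^{\pm(1-\b)/2}=z^{\pm(1-\b)/2}(1+O(|z|^{-1}))$, and $\wt G(z)=O(|z|^{-1})$, the exponents in $z$ balance to give $M_{11},M_{22}=1+O(|z|^{-1})$ and $M_{12},M_{21}=O(|z|^{-1})$.

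The main work is (M2). Since $\sqrt{z-b}$ flips sign across $(a,b)$, one obtains $[(z-a)(z-b)]^{1/4}|_+=i[(z-a)(z-b)]^{1/4}|_-$ together with the crucial swap $\big((\sqrt{z-a}+\sqrt{z-b})/2\big)_+^\g=\big((\sqrt{z-a}-\sqrt{z-b})/2\big)_-^\g$, which causes the two columns of $M$ to essentially interchange under $+\leftrightarrow-$; the identity (\ref{JoG'}) gives $e^{\wt G_+(x)}=e^{V(x)+L-\wt G_-(x)}$, which reproduces the $e^{-V(x)}$ factor of $H$ and $\wt H$; finally, $(z-1)^{\pm(1-\b)/2}$ is continuous on $(1,b)$ and picks up $e^{\pm i\pi(1-\b)}$ across $(a,1)$, producing $(x-1)^{\b-1}$ on $(1,b)$ and $(1-x)^{\b-1}$ on $(a,1)$, matching $H$ and $\wt H$ respectively. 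Substituting these elementary jumps into (\ref{M}) entry by entry and computing $M_-(x)^{-1}M_+(x)$ yields exactly the anti-diagonal matrices prescribed in (\ref{JoM}) on each subinterval; the phase $i$ from the quartic root combines with the $\pm i$ prefactors in $M_{12},M_{21}$ and with the normalizing constant $e^{\pm L}$ to place $\pm H,\pm\wt H$ and their reciprocals in the correct entries. The main obstacle is precisely this bookkeeping: one must simultaneously track three fractional powers and an exponential across the two adjacent subintervals of $[a,b]$, and confirm that all phases combine without spurious sign factors. The conceptual content has been front-loaded into Lemma \ref{lem-G'} (which absorbs the $e^{-V}$ part of $H,\wt H$) and into the Joukowski-type identities (which absorb the algebraic endpoint singularities), so the remaining verification is a short but delicate algebraic check.
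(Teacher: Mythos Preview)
Your proposal is correct and follows the same verification strategy as the paper: (M1) and (M3) by direct inspection of branches and large-$z$ behavior, and (M2) by computing the boundary values of each factor on $(a,1)$ and $(1,b)$, using the swap $(\sqrt{z-a}+\sqrt{z-b})_+^\g=(\sqrt{z-a}-\sqrt{z-b})_-^\g$, the phase from the quartic root, and the jump identity $\wt G_++\wt G_-=V+L$ to reproduce the $e^{-V}$ in $H,\wt H$. One small imprecision worth fixing: a single power $(z-1)^{\pm(1-\b)/2}$ cannot carry its branch cut on the bounded segment $[a,1]$---take it on $(-\infty,1]$ and observe instead that the \emph{combined} jump of all factors cancels on $(-\infty,a)$, which is exactly how the paper argues (M1).
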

\begin{proof}
  Since $\wt G(z)$ is analytic in $\mathbb{C}\setminus [a,b]$, the entries of $M(z)$
  can be analytically continued to the interval $(-\infty, a)$. Thus, (M1) follows.

  The jump conditions in (M2) can be verified as below.
  For $x\in(1,b)$, we obtain from (\ref{HV}) and (\ref{M}) that
  \begin{eqnarray*}
  M_\pm^{11}(x)&=&\cfrac{(x-1)^{\frac{1-\b}2}(\frac{\sqrt{x-a}\pm i\sqrt{b-x}}2)^\b}
  {(x-a)^{1/4}(b-x)^{1/4}e^{\pm i\pi/4} e^{-\wt G_\pm(x)}},\\
  M_\mp^{12}(x)&=&\cfrac{-iH(x)(x-1)^{\frac{1-\b}2} (\frac{\sqrt{x-a}\pm i\sqrt{b-x}}2)^\b}
  {(x-a)^{1/4}(b-x)^{1/4}e^{\mp i\pi/4} e^{\wt G_\mp(x)-V(x)-L}}.
  \end{eqnarray*}
  Thus, the relation (\ref{JoG'}) implies that $M_\mp^{12}(x)/M_\pm^{11}(x)=\pm H(x)$ for $x\in(1,b)$.
  On the other hand, for $x\in(a,1)$, we have from (\ref{HV}) and (\ref{M})
  \begin{eqnarray*}
  M_\pm^{11}(x)&=&\cfrac{(1-x)^{\frac{1-\b}2}e^{\frac{\pm i\pi(1-\b)}2}(\frac{\sqrt{x-a}\pm i\sqrt{b-x}}2)^\b}
  {(x-a)^{1/4}(b-x)^{1/4}e^{\pm i\pi/4} e^{-\wt G_\pm (x)}},\\
  M_\mp^{12}(x)&=&\cfrac{-i\wt H(x)(1-x)^{\frac{1-\b}2}e^{\frac{\pm i\pi(1-\b)}2} (\frac{\sqrt{x-a}\pm i\sqrt{b-x}}2)^\b}
  {(x-a)^{1/4}(b-x)^{1/4}e^{\mp i\pi/4} e^{\wt G_\mp(x)-V(x)-L}}.
  \end{eqnarray*}
  Coupling this with (\ref{JoG'}) yields $M_\mp^{12}(x)/M_\pm^{11}(x)=\pm\wt H(x)$ for $x\in(a,1)$.
  Similarly, a combination of (\ref{HV}), (\ref{JoG'}) and (\ref{M}) gives
  $$\frac{M_\mp^{22}(x)}{M_\pm^{21}(x)}=\begin{cases}
    \pm H(x),&\indent x\in(1,b),\\
    \pm\wt H(x),&\indent x\in(a,1).
  \end{cases}
  $$
  This proves (M2).

  By (G3) in Lemma \ref{lem-G'}, we have $\wt G(z)=O(|z|^{-1})$ as $z\to\infty$.
  Hence, it is easily seen from (\ref{M}) that $M(z)=I+O(|z|^{-1})$ as $z\to\infty$.
\end{proof}

Note that the solution to the Riemann-Hilbert problem (M1)-(M3) is not unique
because the boundary conditions at the two end points $a$ and $b$ are not specified.
However, as we shall see,
the matrix-valued function $M(z)$ defined in (\ref{M}) seems to be the best choice for us.

From (\ref{V-asymp}), (\ref{G'-asymp}) and (\ref{L-asymp}) we have, as $n\to\infty$,
$|\wt G(z)|+|V(z)+L|=O(1/n)$
uniformly for $z$ bounded away from the negative real line.
By virtue of the relations
\begin{eqnarray*}
\sqrt{z-a}+\sqrt{z-b}=e^{\pm i\pi/2}(\sqrt{b-z}+\sqrt{a-z}),\\
\sqrt{z-a}-\sqrt{z-b}=e^{\mp i\pi/2}(\sqrt{b-z}-\sqrt{a-z}),
\end{eqnarray*}
we obtain from (\ref{HV}) and (\ref{M}) that
\begin{eqnarray*}
&&\wt H^{-\s_3/2}M\wt H^{\s_3/2}\\&=&\left(\begin{matrix}
    \cfrac{(1-z)^{\frac{1-\b}2}(\frac{\sqrt{b-z}+\sqrt{a-z}}2)^\b}{(b-z)^{1/4}(a-z)^{1/4}}&
    \cfrac{i(1-z)^{\frac{1-\b}2} (\frac{\sqrt{b-z}-\sqrt{a-z}}2)^\b}{(b-z)^{1/4}(a-z)^{1/4}} \\
    \\
    \cfrac{-i(1-z)^{\frac{\b-1}2}(\frac{\sqrt{b-z}-\sqrt{a-z}}2)^{2-\b}}{(b-z)^{1/4}(a-z)^{1/4}}&
    \cfrac{(1-z)^{\frac{\b-1}2} (\frac{\sqrt{b-z}+\sqrt{a-z}}2)^{2-\b}}{(b-z)^{1/4}(a-z)^{1/4}}
  \end{matrix}\right)\\
  &&\times\left(I+O(\frac 1 n)\right),
\end{eqnarray*}
which again holds uniformly for $z$ bounded away from the negative real line. Define
\begin{eqnarray}
  \label{m'}
  \wt m(z):=\cfrac{(1-z)^{\frac{1-\b}2\s_3}}{(b-z)^{1/4}(a-z)^{1/4}}
  \left(\begin{matrix}
    (\frac{\sqrt{b-z}+\sqrt{a-z}}2)^\b &
    i(\frac{\sqrt{b-z}-\sqrt{a-z}}2)^\b
    \\
    \\
    -i(\frac{\sqrt{b-z}-\sqrt{a-z}}2)^{2-\b}\ &
    (\frac{\sqrt{b-z}+\sqrt{a-z}}2)^{2-\b}
  \end{matrix}\right)
  \left(\begin{matrix}
    1&i \\
    i&1
  \end{matrix}\right).
\end{eqnarray}
As $n\to\infty$, we have
\begin{eqnarray}
  \label{Mm'}
  \wt H(z)^{-\s_3/2}M(z)\wt H(z)^{\s_3/2}
  \left(\begin{matrix}
    1&i \\
    i&1
  \end{matrix}\right)=\wt m(z)\left(I+O(\frac 1 n)\right).
\end{eqnarray}
Similarly, define
\begin{eqnarray}
  \label{m}
  m(z):=\cfrac{(z-1)^{\frac{1-\b}2\s_3}}{(z-a)^{1/4}(z-b)^{1/4}}
  \left(\begin{matrix}
    (\frac{\sqrt{z-a}+\sqrt{z-b}}2)^\b &
    -i(\frac{\sqrt{z-a}-\sqrt{z-b}}2)^\b \\
    \\
    i(\frac{\sqrt{z-a}-\sqrt{z-b}}2)^{2-\b}\ &
    (\frac{\sqrt{z-a}+\sqrt{z-b}}2)^{2-\b}
  \end{matrix}\right)
  \left(\begin{matrix}
    1&i \\
    i&1
  \end{matrix}\right).
\end{eqnarray}
From (\ref{HV}) and (\ref{M}), we obtain
\begin{eqnarray}
  \label{Mm}
  H(z)^{-\s_3/2}M(z)H(z)^{\s_3/2}
  \left(\begin{matrix}
    1&i \\
    i&1
  \end{matrix}\right)=m(z)\left(I+O(\frac 1 n)\right),\indent n\to\infty.
\end{eqnarray}
The estimates (\ref{Mm'}) and (\ref{Mm}) hold uniformly for $z$ bounded away from the negative real line.
Recall that we are using capital letters to emphasize the dependence on $n$;
see the paragraph before Proposition \ref{prop-P}.
The small letters $\wt m$ and $m$ in (\ref{m'}) and (\ref{m}), respectively,
indicate that these two matrix-valued functions are independent of $n$.
Note that for any small $\ve>0$, the matrix-valued function $m(z)(z-b)^{\s_3/4}$
is analytic in $U(b,\ve):=\{z\in\bb{C}: |z-b|<\ve\}$, and the matrix-valued function
$\wt m(z)(a-z)^{-\s_3/4}$ is analytic in $U(a,\ve):=\{z\in\bb{C}: |z-a|<\ve\}$.

Next, we find the solution to the scalar Riemann-Hilbert problem:
\begin{enumerate}[(D1)]
\item $D(z)$ is analytic in
  $\mathbb{C}\setminus (-i\infty, i\infty)$;
\item $D(z)$ satisfies the jump condition
  \begin{eqnarray}\label{JoD}
    D_+(z)=D_-(z)\frac {E(z)}{\wt E(z)},&\indent z\in (-i\infty, i\infty),
  \end{eqnarray}
  where the functions $D_+(z)$ and $D_-(z)$ denote the boundary values of $D(z)$
  taken from the left and right of the imaginary line respectively;
\item for $z\in\mathbb{C}\setminus (-i\infty, i\infty)$,
  $D(z)=1+O(|z|^{-1})$ as $z\to\infty$.
\end{enumerate}
Recall from (\ref{E/E'}) that
$E(z)/\wt E(z)=1-e^{\pm2i\pi(nz-\b/2)}.$
The solution to the Riemann-Hilbert problem (D1)-(D3) is given by
\begin{eqnarray}\label{D}
  D(z)&=&\exp\bigg\{\frac 1{2\pi i}\int_{-i\infty}^{i\infty}\log\bigg(\frac{E(\z)}{\wt E(\z)}\bigg)\frac{d\z}{\z-z}\bigg\}
  \nonumber\\
  &=&\exp\bigg\{\frac 1{2\pi i}\int_0^\infty
  \left[\frac{\log(1-e^{-2n\pi s-i\pi\b})}{s+iz}-\frac{\log(1-e^{-2n\pi s+i\pi\b})}{s-iz}\right]ds\bigg\}.
\end{eqnarray}
It can be shown that as $n\to\infty$, the function $D(z)$ converges uniformly
to one for $z$ bounded away from the origin.

Finally, we construct the parametrix $T_{par}(z)$.
We shall make use of the so-called {\it Airy parametrix}:
\begin{subequations}\label{A}
  \begin{equation}\label{A1}
    A(z):=\left(\begin{array}{cc}\Ai(z)&\omega^2\Ai(\omega^2z)\\
    \\i\Ai'(z)&i\omega \Ai'(\omega^2z)\end{array}\right)
  \end{equation}
  for $\arg z\in(0,2\pi/3)$, and
  \begin{equation}\label{A2}
    A(z):=\left(\begin{array}{cc}-\omega \Ai(\omega z)&\omega^2\Ai(\omega^2z)\\
    \\-i\omega^2\Ai'(\omega z)&i\omega \Ai'(\omega^2z)\end{array}\right)
  \end{equation}
  for $\arg z\in(2\pi/3,\pi)$, and
  \begin{equation}\label{A3}
    A(z):=\left(\begin{array}{cc}-\omega^2\Ai(\omega^2z)&-\omega \Ai(\omega z)\\
    \\-i\omega \Ai'(\omega^2z)&-i\omega^2 \Ai'(\omega z)\end{array}\right)
  \end{equation}
  for $\arg z\in(-\pi,-2\pi/3)$, and
  \begin{equation}\label{A4}
    A(z):=\left(\begin{array}{cc}\Ai(z)&-\omega \Ai(\omega z)\\
    \\i\Ai'(z)&-i\omega^2 \Ai'(\omega z)\end{array}\right)
  \end{equation}
  for $\arg z\in(-2\pi/3,0)$.
\end{subequations}
Let $\d_0$ be determined in Remark \ref{rem-d}.
Fix any $0<\ve<\d<\d_0$ and
denote by $U(z_0,\ve)$ the open disk centered at $z_0$ with radius $\ve$,
where $z_0=0,\ a \OR b$.
We define
\begin{eqnarray}\label{Tp}
 T_{par}(z):=M(z)
\end{eqnarray}
for $z\in\bb{C}\setminus(U(0,\ve)\cup U(a,\ve)\cup U(b,\ve))$, and
\begin{eqnarray}\label{Tp0}
 T_{par}(z):=M(z)D(z)^{\s_3}
\end{eqnarray}
for $z\in U(0,\ve)$, and
\begin{eqnarray}\label{Tpb}
 T_{par}(z):=
 \sqrt\pi  H(z)^{\s_3/2}m(z) F(z)^{\s_3/4} {A}(F(z))e^{n\phi(z)\sigma_3} H(z)^{-\s_3/2}
\end{eqnarray}
for $z\in U(b,\ve)$, and
\begin{eqnarray}\label{Tpa}
 T_{par}(z):=
 \sqrt\pi {\wt H(z)}^{\s_3/2}\wt m(z)\wt F(z)^{-\s_3/4}\s_1 {A}(\wt F(z))\s_1e^{n\wt\phi(z)\sigma_3} {\wt H(z)}^{-\s_3/2}
\end{eqnarray}
for $z\in U(a,\ve)$,
where the functions $F(z)$ and $\wt F(z)$ are defined by
\begin{eqnarray}\label{F}
 F(z):=\left(\frac 3 2n\phi(z)\right)^{2/3},\indent \wt F(z):=\left(-\frac 3 2n\wt\phi(z)\right)^{2/3},
\end{eqnarray}
and $\s_1:=\left(\begin{array}{cc}0&1\\ 1&0\end{array}\right)$ and
$\s_3:=\left(\begin{array}{cc}1&0\\ 0&-1\end{array}\right)$ are the Pauli matrices.
By virtue of the identity of the Airy function
$
\Ai(z)+\omega\Ai(\omega z)+\omega^2\Ai(\omega^2 z)=0,
$
the Airy parametrix defined in (\ref{A}) has the jump conditions:
\begin{subequations}\label{JoA}
  \begin{equation}\label{JoA1}
    A_+(z)=A_-(z)\left(\begin{array}{cc}1&0\\ \pm1&1\end{array}\right)
  \end{equation}
  for $z\in(0,\infty e^{\pm 2\pi/3})$, and
  \begin{equation}\label{JoA2}
    A_+(z)=A_-(z)\left(\begin{array}{cc}0&-1\\ 1&0\end{array}\right)
  \end{equation}
  for $z\in(-\infty,0)$, and
  \begin{equation}\label{JoA3}
    A_+(z)=A_-(z)\left(\begin{array}{cc}1&-1\\ 0&1\end{array}\right)
  \end{equation}
  for $z\in(0,\infty)$.
\end{subequations}
The Airy parametrix and its jump conditions are illustrated in Figure \ref{fig-A}.
\begin{figure}[htp]
\centering
\includegraphics{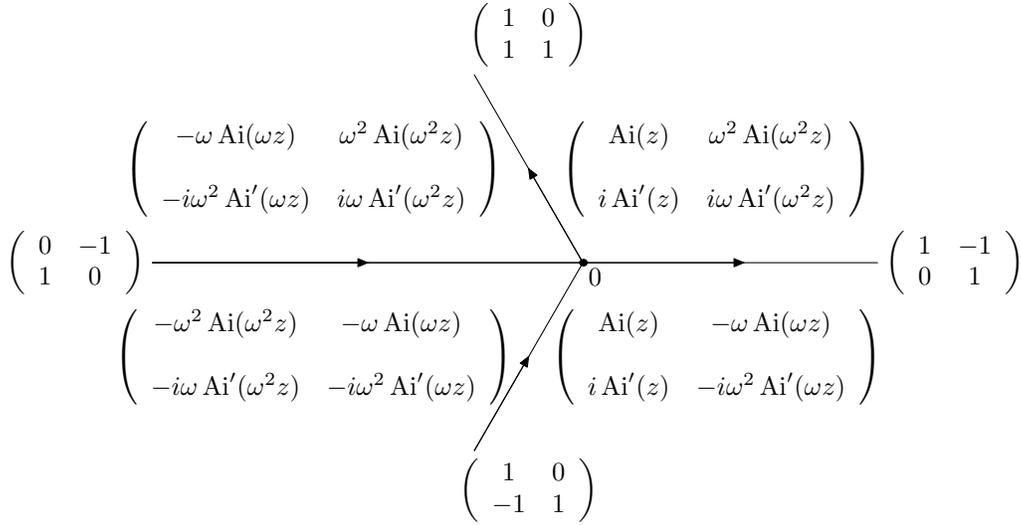}
\caption{The Airy parametrix and its jump
conditions.}\label{fig-A}
\end{figure}

\begin{rem}\label{rem-SoT}
  Now, we determine the precise shape of the curves $\Sigma_{T,\pm}^2$ and $\Sigma_{T,\pm}^6$ in Figure \ref{fig-SoT}.
  Recall the definition of the functions $F$ and $\wt F$ in (\ref{F}).
  On account of (\ref{prop-phi11}) and (\ref{prop-phi12}), we have
  \begin{eqnarray}\label{F-asymp}
    F(z)\sim\left(\frac{2n}{b\sqrt{b-a}}\right)^{2/3}(z-b)
  \end{eqnarray}
  as $z\to b$, and
  \begin{eqnarray}\label{F'-asymp}
    \wt F(z)\sim\left(\frac{2n}{a\sqrt{b-a}}\right)^{2/3}(a-z)
  \end{eqnarray}
  as $z\to a$.
  Furthermore, the function $F(z)$ is analytic in $U(b,\d_0)$
  and the function $\wt F(z)$ is analytic in $U(a,\d_0)$; see the choice of $\d_0$ in Remark \ref{rem-d}.
  We choose $\Sigma_{T,\pm}^6$ to be the inverse image of the rays $(0,\infty e^{\pm2\pi/3})$
  under the holomorphic map $F$,
  and $\Sigma_{T,\pm}^2$ to be the inverse image of the rays $(0,\infty e^{\mp2\pi/3})$
  under the holomorphic map $\wt F$.
\end{rem}

We recall the asymptotic expansions of the Airy function and its
derivative (cf. \cite[p. 392]{Ol97} or \cite[p. 47]{Wo89})
\begin{eqnarray}\label{Airy}
 \Ai(z)\sim \frac{z^{-1/4}}{2\sqrt\pi}e^{-\frac 2 3z^{3/2}}\sum_{s=0}^\infty
 \frac{(-1)^su_s}{(\frac 2 3z^{3/2})^s},\indent
 \Ai'(z)\sim -\frac{z^{1/4}}{2\sqrt\pi}e^{-\frac 2 3z^{3/2}}\sum_{s=0}^\infty
 \frac{(-1)^sv_s}{(\frac 2 3z^{3/2})^s}
\end{eqnarray}
as $z\to\infty$ with $|\arg z|<\pi$, where $u_s, v_s$ are constants with
$u_0=v_0=1$. Therefore, applying (\ref{Airy}) to (\ref{A}), we obtain
\begin{eqnarray}\label{A-asymp}
   {A}(z)=\frac {z^{-\s_3/4}}{2\sqrt\pi} \left(\begin{matrix}
    1&-i \\
    -i&1
  \end{matrix}\right)(I+O(|z|^{-3/2}))e^{-\frac 2 3z^{3/2}\s_3},\indent z\to\infty.
\end{eqnarray}
Define
\begin{eqnarray}\label{K}
  K(z):=n^{-\b\s_3/2}T(z)T_{par}^{-1}(z)n^{\b\s_3/2}.
\end{eqnarray}
The jump conditions of the function $K(z)$ are studied in the following proposition.
\begin{prop}\label{prop-JoK}
  Let $\Sigma_K$ be the contour shown in Figure \ref{fig-SoK}.
  The matrix-valued function $K(z)$ is analytic in $\bb{C}\setminus\Sigma_K$.
  On the contour $\Sigma_K$, the jump matrix $J_K(z):=K_-(z)^{-1}K_+(z)$ has the following explicit expressions.
  For $z\in\Sigma_{K,\pm}^1$, we have
  \begin{eqnarray}\label{JoK1}
  J_K(z)=n^{-\b\s_3/2}M\left(\begin{matrix}
    E/\wt E&0\\
    \\
    \cfrac{e^{2n\phi}}{\mp H}&\wt E/E
  \end{matrix}\right)M^{-1}n^{\b\s_3/2}.
  \end{eqnarray}
  For $z\in\Sigma_{K,\pm}^2$, we have
  \begin{eqnarray}\label{JoK2}
  J_K(z)=n^{-\b\s_3/2}M\left(\begin{matrix}
    1&\cfrac{\pm\wt H}{e^{2n\wt\phi}}\\
    \\
    0&1
  \end{matrix}\right)M^{-1}n^{\b\s_3/2}.
  \end{eqnarray}
  For $z\in\Sigma_{K,\pm}^3\cup\Sigma_{K,\pm}^5$, we have
  \begin{eqnarray}\label{JoK3/5}
  J_K(z)=n^{-\b\s_3/2}M\left(\begin{matrix}
    1&\cfrac{\pm\wt H \wt E}{e^{2n\wt\phi}E}\\
    \\
    \cfrac{e^{2n\phi}}{\mp H}&{\wt E}/{E}
  \end{matrix}\right)M^{-1}n^{\b\s_3/2}.
  \end{eqnarray}
  For $z\in\Sigma_{K,\pm}^6$, we have
  \begin{eqnarray}\label{JoK6}
  J_K(z)=n^{-\b\s_3/2}M\left(\begin{matrix}
     1&0\\
    \\
    \cfrac{e^{2n\phi}}{\pm H}&1
  \end{matrix}\right)M^{-1}n^{\b\s_3/2}.
  \end{eqnarray}
  For $z\in\Sigma_{K,\pm}^7$, we have
  \begin{eqnarray}\label{JoK7}
  J_K(z)=n^{-\b\s_3/2}M\left(\begin{matrix}
    1&\cfrac{\pm\wt H\wt E}{e^{2n\wt\phi}E}\\
    \\
    0&1
  \end{matrix}\right)M^{-1}n^{\b\s_3/2}.
  \end{eqnarray}
  For $z\in\Sigma_K^b$, we have
  \begin{eqnarray}\label{JoKb}
  J_K(z)=\sqrt\pi  n^{-\b\s_3/2}H^{\s_3/2}m F^{\s_3/4} {A}(F)e^{n\phi\sigma_3} H^{-\s_3/2}M^{-1}n^{\b\s_3/2}.
  \end{eqnarray}
  For $z\in\Sigma_K^a$, we have
  \begin{eqnarray}\label{JoKa}
  J_K(z)=\sqrt\pi n^{-\b\s_3/2}{\wt H}^{\s_3/2}\wt m\wt F^{-\s_3/4}\s_1
   {A}(\wt F)\s_1e^{n\wt\phi\sigma_3} {\wt H}^{-\s_3/2}M^{-1}n^{\b\s_3/2}.
  \end{eqnarray}
  For $z\in\Sigma_K^0$, we have
  \begin{eqnarray}\label{JoK0}
  J_K(z)=n^{-\b\s_3/2}MD^{\s_3}M^{-1}n^{\b\s_3/2}.
  \end{eqnarray}
  For $z\in\Sigma_{K,\pm}'$, we have
  \begin{eqnarray}\label{JoK'}
  J_K(z)=n^{-\b\s_3/2}M\left(\begin{matrix}
    1&0\\
    \\
    \cfrac{e^{2n\phi}}{\mp HD_+D_-}&1
  \end{matrix}\right)M^{-1}n^{\b\s_3/2}.
  \end{eqnarray}
Furthermore, the jump conditions of $K(z)$ on the positive real line are given as
\begin{subequations}\label{JoK-realline}
  \begin{equation}
    J_K(x)=n^{-\b\s_3/2}M\left(\begin{matrix}
    1&0\\
    \\
    \cfrac{e^{2n\wt\phi}}{\wt HD^2}&1
  \end{matrix}\right)M^{-1}n^{\b\s_3/2}
  \end{equation}
  for $0<x<\ve$, and
  \begin{equation}
    J_K(x)=n^{-\b\s_3/2}M\left(\begin{matrix}
    1&0\\
    \\
    \cfrac{e^{2n\wt\phi}}{\wt H}&1
  \end{matrix}\right)M^{-1}n^{\b\s_3/2}
  \end{equation}
  for $\ve<x<a-\ve$, and
  \begin{equation}
    J_K(x)=n^{-\b\s_3/2}M\left(\begin{matrix}
    1&\cfrac{- H}{e^{2n\phi}} \\
    \\
    0&1
  \end{matrix}\right)M^{-1}n^{\b\s_3/2}
  \end{equation}
  for $x>b+\ve$.
\end{subequations}
On the contour $\Sigma_K\setminus(\Sigma_K^a\cup\Sigma_K^b\cup\Sigma_K^0)$, the $L^\infty$ and $L^1$ norms of
the difference $J_K-I$ are exponentially small as $n\to\infty$.
On the contour $\Sigma_K^a\cup\Sigma_K^b\cup\Sigma_K^0$, we have
$$
\|J_K-I\|_{L^\infty(\Sigma_K^a\cup\Sigma_K^b\cup\Sigma_K^0)}=O(\frac 1 n),\indent n\to\infty.
$$

\begin{figure}[htp]
\centering
\includegraphics{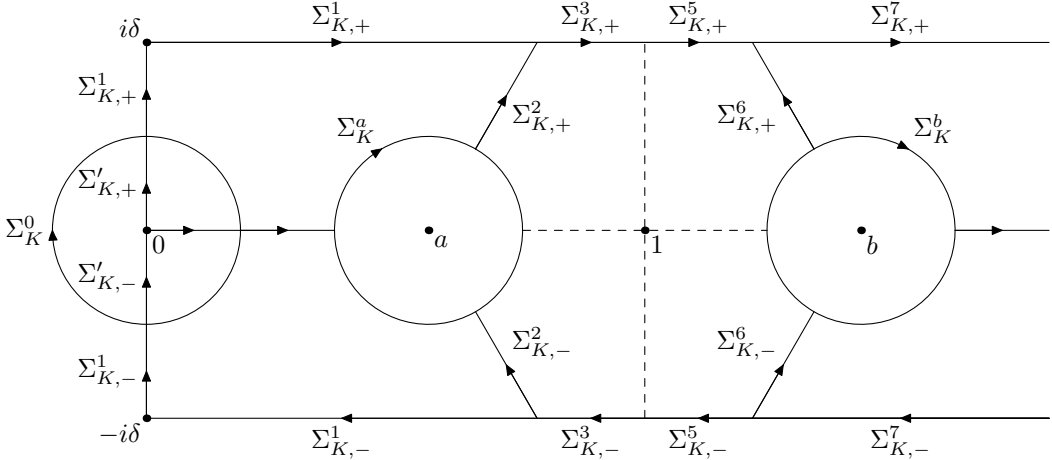}
\caption{The contour $\Sigma_K$.}\label{fig-SoK}
\end{figure}

\end{prop}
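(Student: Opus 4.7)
The plan is to verify the analyticity and jump structure of $K(z)$ piece by piece, then estimate $\|J_K-I\|$ using the steepest-descent bounds already at our disposal.

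For analyticity, I note that $T(z)$ is analytic off $\S_T$ by Proposition \ref{prop-T} and that $T_{par}(z)$ is analytic in $\bb{C}$ minus the union of $(a,b)$ (inherited from $M$), the three disk boundaries, and the imaginary segment inside $U(0,\ve)$ (inherited from $D$). Two cancellations are crucial. First, on $(a,b)$ the jump of $M$ in (\ref{JoM}) agrees with the jump of $T$ in (\ref{JoT0}), so $J_K=I$ there and $K$ continues analytically across. Second, on $\S_{T,\pm}^4$ the jump of $T$ is already trivial by (\ref{JoT4}) while $T_{par}=M$ is analytic, so again $J_K=I$. What remains of the singular set of $TT_{par}^{-1}$ is exactly the contour $\S_K$ of Figure \ref{fig-SoK}.

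For the jump identities, I would work from the general formula
\begin{equation*}
J_K(z)=n^{-\b\s_3/2}\,T_{par,-}(z)\,J_T(z)\,T_{par,+}(z)^{-1}\,n^{\b\s_3/2},
\end{equation*}
which follows from $K_+=K_-J_K$ and the definition (\ref{K}). On each lens-type piece $\S_{K,\pm}^{1},\dots,\S_{K,\pm}^{7}$ and on the real segments $(\ve,a-\ve)$ and $(b+\ve,\infty)$, the parametrix reduces to $T_{par}=M$, which is continuous across, so the formula collapses to $n^{-\b\s_3/2}MJ_TM^{-1}n^{\b\s_3/2}$, and the expressions (\ref{JoK1})--(\ref{JoK7}) together with the second and third cases of (\ref{JoK-realline}) are read off directly from Proposition \ref{prop-JoT}. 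On the three disk boundaries $\S_K^{b},\S_K^{a},\S_K^{0}$, by the choice $\ve<\d<\d_0$ of Remark \ref{rem-d} the function $T$ is analytic in a neighborhood, so $J_T=I$ and $J_K=n^{-\b\s_3/2}T_{par,-}T_{par,+}^{-1}n^{\b\s_3/2}$; with the orientation placing $+$ outside the disk (so $T_{par,+}=M$) one reads off (\ref{JoKb}), (\ref{JoKa}) and (\ref{JoK0}) from (\ref{Tpb}), (\ref{Tpa}) and (\ref{Tp0}). On $\S_{K,\pm}'$ both $T$ and $T_{par}=MD^{\s_3}$ have jumps; inserting $J_T$ from (\ref{JoT1}) and the jump $D_+/D_-=E/\wt E$ of (\ref{JoD}) into the general formula and simplifying with $M_+=M_-$ yields (\ref{JoK'}). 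On $(0,\ve)$ the $D$ factor contributes the product $D_+D_-=D^2$ appearing in the first case of (\ref{JoK-realline}), because $M$ is analytic across the real axis inside $U(0,\ve)$.

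The norm estimates are the steepest-descent core of the argument. On the lens pieces and on the real intervals away from $\{0,a,b\}$, the off-diagonal entries of $J_K-I$ are, after the bounded conjugation by $n^{-\b\s_3/2}M(\cdot)M^{-1}n^{\b\s_3/2}$ (the $n^{\pm\b}$ powers being absorbed by (\ref{H-asymp1}) and $M$ being bounded there), of the form $e^{2n\phi}/H$ or $\wt He^{-2n\wt\phi}$. By Remark \ref{rem-SoT} the contours are chosen as inverse images of the Airy steepest-descent rays, so that (\ref{prop-phi11})--(\ref{prop-phi24}) give $\re(\pm\phi)\le-c<0$ or $\re(\mp\wt\phi)\le-c<0$ uniformly on the piece in question, with $c>0$ independent of $n$; exponential decay in $n$ in both $L^\infty$ and $L^1$ follows. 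On $\S_K^{b}$ the matching reduces, via (\ref{A-asymp}), the fact $|F(z)|\sim n^{2/3}$ on $|z-b|=\ve$, and (\ref{Mm}), to $J_K=I+O(n^{-1})$; the analogous calculation at $a$ uses (\ref{Mm'}) and at $0$ uses that $D(z)=1+O(1/n)$ uniformly on $\partial U(0,\ve)$, as noted after (\ref{D}).

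The principal obstacle is the matching on $\S_K^b$ and $\S_K^a$: one must track the conjugations by $H^{\s_3/2}$ (respectively $\wt H^{\s_3/2}$) and by $n^{-\b\s_3/2}$ so that the potentially unbounded scalar $n^\b$ cancels against $H$ via (\ref{H-asymp1}); then insert the Airy asymptotic (\ref{A-asymp}) to see the leading block $\frac{1}{2\sqrt\pi}F^{-\s_3/4}(I-i\s_1)e^{-\frac{2}{3}F^{3/2}\s_3}$ meet $e^{n\phi\s_3}$ and collapse, via $\tfrac{2}{3}F^{3/2}=n\phi$ from (\ref{F}), to a factor that equals $H^{\s_3/2}m(z)H^{-\s_3/2}$ times $I+O(n^{-1})$; and finally invoke (\ref{Mm}) to identify this leading block with $M$ modulo $O(1/n)$. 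The $L^1$ bound on the disk boundaries follows from the $L^\infty$ bound because the boundaries have finite length.
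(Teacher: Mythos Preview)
Your overall strategy matches the paper's, but there is a real gap in the analyticity argument. You assert that $T_{par}$ is analytic in $\bb C$ minus $(a,b)$, the three disk boundaries, and the imaginary segment inside $U(0,\ve)$. This is false: inside $U(b,\ve)$ the parametrix is given by (\ref{Tpb}), which contains the Airy block $A(F(z))$, and $A$ jumps across all four rays in (\ref{JoA}); similarly inside $U(a,\ve)$. Hence $T_{par}$ has jumps on the preimages under $F$ (respectively $\wt F$) of those rays, which by Remark~\ref{rem-SoT} are precisely the arcs of $\S_{T,\pm}^2$, $\S_{T,\pm}^6$ and of the real axis lying inside the two disks.

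The step you are missing is the verification that these interior jumps of $T_{par}$ coincide \emph{exactly} with the interior jumps of $T$ listed in (\ref{JoT0}) and (\ref{JoT2/6}), so that $K=n^{-\b\s_3/2}TT_{par}^{-1}n^{\b\s_3/2}$ continues analytically across them. The paper does this by first checking that the prefactor $mF^{\s_3/4}$ in (\ref{Tpb}) is genuinely analytic in $U(b,\ve)$ (the quarter-root singularity of $m$ at $z=b$ cancels against $F^{\s_3/4}$ via (\ref{F-asymp})), and then reading off from (\ref{JoA}) that the conjugated Airy jumps reproduce (\ref{JoT0}b,d) and (\ref{JoT2/6}b); the argument at $a$ is parallel using $\wt m\wt F^{-\s_3/4}$ and (\ref{F'-asymp}). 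Without this check you have not established that $\S_K$ is what Figure~\ref{fig-SoK} shows, nor that (\ref{JoKb}) and (\ref{JoKa}) are the \emph{only} jumps of $K$ near $b$ and $a$. Your ``two cancellations'' paragraph inherits the same problem: on the portions of $(a,b)$ inside the disks $T_{par}$ is not $M$, so (\ref{JoM}) is not the relevant comparison there. A secondary point: the boundedness of the conjugation $n^{-\b\s_3/2}M(\cdot)M^{-1}n^{\b\s_3/2}$ does not follow from (\ref{H-asymp1}) as you write, but from the explicit form (\ref{M}) together with (\ref{G'-asymp}) and (\ref{L-asymp}); the paper records this as (\ref{M-asymp}).
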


\begin{proof}
In Remark \ref{rem-SoT} we have shown that the function $F(z)$ is analytic in $U(b,\d_0)$
and the function $\wt F(z)$ is analytic in $U(a,\d_0)$. Since $0<\ve<\d<\d_0$,
we obtain from (\ref{m}) and (\ref{F-asymp}) that the matrix-valued function $mF^{\s_3/4}$ is analytic in $U(b,\ve)$,
and from (\ref{m'}) and (\ref{F'-asymp}) that the matrix-valued function $\wt m\wt F^{-\s_3/4}$ is analytic in $U(a,\ve)$.
Therefore, applying (\ref{JoA}) to (\ref{Tpb}) and (\ref{Tpa}) implies that the parametrix $T_{par}(z)$
possesses the same jump conditions as $T(z)$ in $U(a,\ve)\cup U(b,\ve)$;
see (\ref{JoT0}) and (\ref{JoT2/6}) in Proposition \ref{prop-JoT}.
  Thus, the function $K(z)$ defined in (\ref{K}) is analytic in $U(a,\ve)\cup U(b,\ve)$.
  Moreover, applying (\ref{JoT0}), (\ref{JoM}) and (\ref{Tp}) to (\ref{K})
  implies that the function $K(z)$ can be analytically continued to the interval $(a+\ve,b-\ve)$.
  Therefore, the analyticity of $K(z)$ in $\bb{C}\setminus\Sigma_K$
  is clear from the analyticity of $T(z)$ in $\bb{C}\setminus\Sigma_T$.

  Since the function $M(z)$ is analytic in $\bb{C}\setminus[a,b]$,
  we obtain (\ref{JoK1})-(\ref{JoK7}) from
  (\ref{JoT3})-(\ref{JoT7}), (\ref{JoT2/6}), (\ref{Tp}) and (\ref{K}).

  Since $T(z)$ has no jump on $\Sigma_K^a\cup\Sigma_K^b\cup\Sigma_K^0$,
  the formulas (\ref{JoKb})-(\ref{JoK0}) follow immediately from
  the definition of $T_{par}(z)$ in (\ref{Tp})-(\ref{Tpa}),
  and from the definition of $K(z)$ in (\ref{K}).

  For $z\in\Sigma_{K,\pm}'$, applying (\ref{Tp0}) to (\ref{K}) gives
  $$J_K(z)=n^{-\b\s_3/2}MD_-^{\s_3}T_-^{-1}T_+D_+^{-\s_3}M^{-1}n^{\b\s_3/2}.$$
  Thus, formula (\ref{JoK'}) follows from (\ref{JoT1}) and (\ref{JoM}).

  Moreover, a combination of (\ref{JoT0}), (\ref{Tp}), (\ref{Tp0}) and (\ref{K}) yields (\ref{JoK-realline}).

  From (\ref{G'-asymp}), (\ref{L-asymp}) and (\ref{M}), we obtain
  \begin{eqnarray}
  \label{M-asymp}
    |n^{-\b\s_3/2}M(z)n^{\b\s_3/2}|=O(1),\indent n\to\infty.
  \end{eqnarray}
  By applying (\ref{prop-phi13})-(\ref{prop-phi24}), (\ref{E/E'}), (\ref{H-asymp2}), (\ref{D}) and (\ref{M-asymp}) to
  (\ref{JoK1})-(\ref{JoK7}) and (\ref{JoK0})-(\ref{JoK-realline}), it follows
  that the norm $\|J_K-I\|_{L^\infty(\Sigma_K\setminus(\Sigma_K^a\cup\Sigma_K^b\cup\Sigma_K^0))}$ is exponentially small as $n\to\infty$.

  To prove the exponential decay property of the norm $\|J_K-I\|_{L^1(\Sigma_K\setminus(\Sigma_K^a\cup\Sigma_K^b\cup\Sigma_K^0))}$,
  we only need to show the $L^1$ norm of the difference $J_K-I$
  on the infinite contour $\Sigma_{K,\pm}^7\cup(b+\ve,\infty)$ is exponentially small as $n\to\infty$.
  Firstly, since $\phi''(x)>0$ for $x>b$ by (\ref{phi}) and the fact that $ab=1$, we have
  $$\phi(x)>\phi(b+\ve)+(x-b-\ve)\phi'(b+\ve)$$ for any $x>b+\ve$.
  Hence, we obtain
  $$\|e^{-2n\phi}\|_{L^1(b+\ve,\infty)}\le \frac{e^{-2n\phi(b+\ve)}}{2n\phi'(b+\ve)}.$$
  Applying (\ref{H-asymp2}) and (\ref{M-asymp}) to (\ref{JoK-realline}) implies that the norm
  $\|J_K-I\|_{L^1(b+\ve,\infty)}$ is exponentially small as $n\to\infty$.
  Furthermore, we observe from (\ref{phi}), (\ref{prop-phi22}) and (\ref{prop-phi23})
  that the $L^1$ norm of the function $e^{-2n\wt\phi}$ on the contour $\Sigma_{K,\pm}^7$ is also exponentially small as $n\to\infty$.
  Therefore, applying (\ref{E/E'}), (\ref{H-asymp2}) and (\ref{L-asymp}) to (\ref{JoK7}) implies that the norm
  $\|J_K-I\|_{L^1(\Sigma_{K,\pm}^7)}$ is exponentially small as $n\to\infty$.
  Thus, the exponential decay property of the norm $\|J_K-I\|_{L^1(\Sigma_K\setminus(\Sigma_K^a\cup\Sigma_K^b\cup\Sigma_K^0))}$ follows.

  Now, we prove the last statement of the proposition.
  For $z\in\Sigma_K^b$, applying (\ref{Mm}), (\ref{F}) and (\ref{A-asymp}) to (\ref{JoKb}) yields
  $$
  J_K(z)-I=n^{-\b\s_3/2}H(z)^{\s_3/2}m(z)O(\frac 1 n)m(z)^{-1}H(z)^{-\s_3/2}n^{\b\s_3/2},\indent n\to\infty.
  $$
  The estimate holds uniformly for $z\in\Sigma_K^b$. Thus, we obtain from (\ref{H-asymp1})
  $$
  \|J_K-I\|_{L^\infty(\Sigma_K^b)}=O(\frac 1 n),\indent n\to\infty.
  $$
  Similarly, a combination of (\ref{H-asymp1}), (\ref{Mm'}), (\ref{F}), (\ref{A-asymp}) and (\ref{JoKa}) gives
  $$
  \|J_K-I\|_{L^\infty(\Sigma_K^a)}=O(\frac 1 n),\indent n\to\infty.
  $$
  Finally, by (\ref{D}) we have $D(z)=1+O(1/n)$ uniformly for $z\in\Sigma_K^0$.
  Hence, it follows from (\ref{JoK0}) and (\ref{M-asymp}) that
  $$
  \|J_K-I\|_{L^\infty(\Sigma_K^0)}=O(\frac 1 n),\indent n\to\infty.
  $$
  This ends the proof of the proposition.
\end{proof}

\begin{prop}\label{prop-K}
The matrix-valued function $K(z)$ defined in (\ref{K}) is the unique solution to the Riemann-Hilbert problem:
\begin{enumerate}[(K1)]
\item  $K(z)$ is analytic in $\mathbb{C}\setminus \Sigma_K$;
\item  for $z\in\Sigma_K$, $K_+(z)=K_-(z)J_K(z)$, where $J_K(z)$ is given in Proposition \ref{prop-JoK};
\item for $z\in\mathbb{C}\setminus \Sigma_K$,
  $K(z)=I+O(|z|^{-1})$ as $z\to\infty$.
\end{enumerate}
Furthermore, as $n\to\infty$, we have $K(z)=I+O(1/n)$ uniformly for $z\in\bb{C}\setminus\Sigma_K$.
\end{prop}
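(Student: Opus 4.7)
The plan is to verify (K1)--(K3) routinely from earlier propositions, then extract the $O(1/n)$ estimate from the small-norm theory of Riemann--Hilbert problems.

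First I would check (K1)--(K3) directly from the definition $K(z)=n^{-\b\s_3/2}T(z)T_{par}^{-1}(z)n^{\b\s_3/2}$ in (\ref{K}). Analyticity of $T$ off $\Sigma_T$ (Proposition \ref{prop-T}) combined with the explicit construction of the parametrix $T_{par}$ in (\ref{Tp})--(\ref{Tpa}) gives analyticity of $K$ off the larger contour $\Sigma_K$; here one must note that $T_{par}$ is built so that its jumps match those of $T$ on $\Sigma_T \cap (U(a,\ve)\cup U(b,\ve))$ as well as on $(a+\ve,b-\ve)$, which is exactly why the corresponding pieces drop out of $\Sigma_K$ (this was used in the first paragraph of the proof of Proposition \ref{prop-JoK}). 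The jump relation (K2) is then just the content of Proposition \ref{prop-JoK}, and (K3) follows from (T3) together with the fact that $T_{par}(z)\to I$ as $z\to\infty$ off compact sets (via the asymptotics (\ref{A-asymp}) of the Airy parametrix, (\ref{G'-asymp})--(\ref{L-asymp}) for $M$, and $D(z)\to 1$ from (\ref{D})). Uniqueness of the solution to (K1)--(K3) is the usual Liouville argument that has appeared throughout the paper: if $\wt K$ is another solution then $K\wt K^{-1}$ is entire with $\det = 1$ and tends to $I$ at infinity.

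The substantive part is the uniform estimate $K(z)=I+O(1/n)$. For this I would invoke the standard small-norm theory for Riemann--Hilbert problems (as developed in \cite{DKMVZ99}): if $J_K-I\in L^2\cap L^\infty(\Sigma_K)$ with small norms, then the Cauchy operator $C_{J_K-I}\colon L^2(\Sigma_K)\to L^2(\Sigma_K)$ defined by $(C_{J_K-I}f)(z):=C_-(f(J_K-I))(z)$ satisfies $\|C_{J_K-I}\|\le C\|J_K-I\|_{L^\infty}$, where $C_-$ is the usual boundary Cauchy operator. Proposition \ref{prop-JoK} supplies precisely the two bounds needed: on $\Sigma_K^a\cup\Sigma_K^b\cup\Sigma_K^0$ we have $\|J_K-I\|_{L^\infty}=O(1/n)$, and on the remainder of $\Sigma_K$ the $L^\infty$ and $L^1$ norms of $J_K-I$ are exponentially small. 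Since $\Sigma_K^a\cup\Sigma_K^b\cup\Sigma_K^0$ has finite length, the $L^2$ norm of $J_K-I$ on that piece is also $O(1/n)$; combining with the exponentially small contribution from the rest gives $\|J_K-I\|_{L^2\cap L^\infty(\Sigma_K)}=O(1/n)$.

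Consequently, for $n$ sufficiently large, $I-C_{J_K-I}$ is invertible on $L^2(\Sigma_K)$ via Neumann series, and the unique solution to the RHP (K1)--(K3) admits the Cauchy integral representation
\begin{equation*}
K(z)=I+\frac{1}{2\pi i}\int_{\Sigma_K}\frac{\mu(s)(J_K(s)-I)}{s-z}\,ds,
\end{equation*}
where $\mu=I+(I-C_{J_K-I})^{-1}C_{J_K-I}I$ satisfies $\|\mu-I\|_{L^2(\Sigma_K)}=O(1/n)$. Estimating this integral for $z$ at a fixed positive distance from $\Sigma_K$ gives the bound $O(1/n)$ via $\|J_K-I\|_{L^1}+\|\mu-I\|_{L^2}\|J_K-I\|_{L^2}=O(1/n)$. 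The expected main obstacle is the behavior near the endpoints of $\Sigma_K$ (in particular near the points $a$, $b$, $0$ and the intersection at $1$): one needs to verify that the contour $\Sigma_K$ together with the explicit form of $J_K$ gives the decay of $J_K-I$ uniformly up to these points, which requires carefully combining the Airy-parametrix match-up bounds with the scalar $D(z)$ match-up and the $L^\infty$-boundedness $|n^{-\b\s_3/2}Mn^{\b\s_3/2}|=O(1)$ proved in (\ref{M-asymp}). Once that uniform estimate is in hand, the argument above promotes it to $K(z)=I+O(1/n)$ uniformly for $z\in\bb{C}\setminus\Sigma_K$.
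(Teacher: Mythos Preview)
Your proposal is correct and follows the same approach as the paper: (K1)--(K3) and uniqueness are read off from Proposition \ref{prop-JoK}, (T3), (M3) and the usual Liouville argument, while the $O(1/n)$ estimate is obtained from the small-norm Riemann--Hilbert theory of \cite{DKMVZ99} using the $L^\infty$ and $L^1$ bounds on $J_K-I$ established in Proposition \ref{prop-JoK}. The paper simply cites \cite[Theorem 7.10]{DKMVZ99} for the last step, whereas you spell out the Cauchy-operator/Neumann-series mechanism; one small inaccuracy is that for (K3) you need only (M3), since $T_{par}=M$ outside the three disks and the Airy parametrix plays no role at infinity.
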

\begin{proof}
  The analyticity condition (K1) and the jump conditions (K2) have been shown in Proposition \ref{prop-JoK}.
  The normalization condition (K3) is clear from
  the normalization conditions of the functions $T(z)$ and $M(z)$,
  and from the definition of the function $K(z)$.
  The uniqueness again follows from Liouville's theorem.
  Finally, as in \cite[Theorem 7.10]{DKMVZ99},
  we can obtain from Proposition \ref{prop-JoK} that
  $K(z)=I+O(1/n)$ as $n\to\infty$.
  The estimate is uniform for all $z\in\bb{C}\setminus\Sigma_K$.
\end{proof}

\section{Main Theorem}
We now state our main result of this paper.
\begin{thm}
For any $0<c<1$ and $1\le\b<2$,
let $\d_0>0$ be a sufficiently small number depending only on the constants $c$ and $\b$; see Remark \ref{rem-d}.
Recall from (\ref{v}) and (\ref{l}) that $v(z)=-z\log c$ and $l/2=\log\frac{b-a}4-1$,
where $a$ and $b$ are the Mhaskar-Rakhmanov-Saff numbers given in (\ref{ab}).
The functions $g, \phi, \wt\phi$ and $D$ are defined in (\ref{g}), (\ref{phi}), (\ref{phi'}) and (\ref{D}), respectively.
For any $0<\ve<\d<\d_0$, the large -- $n$ behavior of the monic Meixner polynomial $\pi_n(nz-\b/2)$
is given below (see Figure \ref{fig-rop}).

\begin{enumerate}[(i)]

\item
For $z\in\Omega^4\cup\Omega^\infty$, we have
\begin{eqnarray}\label{O4}
  \pi_n(nz-\b/2)=n^n e^{ng(z)}\frac {z^{(1-\b)/2}(\frac{\sqrt{z-a}+\sqrt{z-b}}2)^\b} {(z-a)^{1/4}(z-b)^{1/4}}
  \left[1+O(\frac 1 n)\right].
\end{eqnarray}

\item
For $z\in\Omega^1_\pm$, we have
\begin{eqnarray}\label{O1}
  \pi_n(nz-\b/2)&=& -2(-n)^n e^{nv(z)/2+nl/2}\frac {z^{(1-\b)/2}(\frac{\sqrt{b-z}+\sqrt{a-z}}2)^\b } {(a-z)^{1/4}(b-z)^{1/4}}
  \nonumber\\
  &&\times\bigg\{\sin(n\pi z-\b\pi/2)e^{-n\wt\phi(z)}\left[1+O(\frac 1 n)\right]
  \nonumber\\&&
  +O(n^\b e^{n\re\phi(z)})\bigg\}.
\end{eqnarray}

\item
For $z\in \Omega^0_l$, we have
\begin{eqnarray}\label{O0l}
  \pi_n(nz-\b/2)=D(z) n^n e^{ng(z)}\frac {(-z)^{(1-\b)/2}
  (\frac{\sqrt{b-z}+\sqrt{a-z}}2)^\b } {(b-z)^{1/4}(a-z)^{1/4}}
  \left[1+O(\frac 1 n)\right].
\end{eqnarray}

\item
For $z\in \Omega^0_{r,\pm}$, we have
\begin{eqnarray}\label{O0r}
  \pi_n(nz-\b/2)&=& - 2D(z)(-n)^n e^{nv(z)/2+nl/2}
  \frac {z^{(1-\b)/2}(\frac{\sqrt{b-z}+\sqrt{a-z}}2)^\b } {(a-z)^{1/4}(b-z)^{1/4}}
  \nonumber\\
  &&\times\bigg\{\sin(n\pi z-\b\pi/2)e^{-n\wt\phi(z)}\left[1+O(\frac 1 n)\right]
  \nonumber\\&&
  +O(n^\b e^{n\re\phi(z)})\bigg\}.
\end{eqnarray}

\item
Recall the definitions of the functions $F(z)$ and $\wt F(z)$ in (\ref{F}).
For $z\in \Omega^a$, we have
\begin{eqnarray}\label{Oa}
  \pi_n(nz-\b/2)&=&(-n)^n\sqrt\pi e^{nv(z)/2+nl/2}\nonumber\\
  &&\times
  \left\{
  \wt{\bf A}(z,n)
  \left[1+O(\frac 1 n)\right]
  +\wt{\bf B}(z,n)
  \left[1+O(\frac 1 n)\right]\right\},
\end{eqnarray}
where
\begin{eqnarray*}
  \wt{\bf A}(z,n)
&:=&\frac{(\frac{\sqrt{b-z}+\sqrt{a-z}}2)^\b+(\frac{\sqrt{b-z}-\sqrt{a-z}}2)^\b}{z^{(\b-1)/2}(b-z)^{1/4}(a-z)^{1/4}\wt F(z)^{-1/4}}
\\&&\times[\cos(n\pi z-\b\pi/2) \Ai(\wt F(z))-\sin(n\pi z-\b\pi/2)\Bi(\wt F(z))],
\end{eqnarray*}
and
\begin{eqnarray*}
  \wt{\bf B}(z,n)
&:=&\frac{(\frac{\sqrt{b-z}+\sqrt{a-z}}2)^\b-(\frac{\sqrt{b-z}-\sqrt{a-z}}2)^\b}{z^{(\b-1)/2}(b-z)^{1/4}(a-z)^{1/4}\wt F(z)^{1/4}}
\\&&\times[\cos(n\pi z-\b\pi/2) \Ai'(\wt F(z))-\sin(n\pi z-\b\pi/2)\Bi'(\wt F(z))].
\end{eqnarray*}

\item
For $z\in \Omega^b$, we have
\begin{eqnarray}\label{Ob}
  \pi_n(nz-\b/2)&=& n^n\sqrt\pi e^{nv(z)/2+nl/2}\nonumber\\
  &&\times
    \bigg\{
  {\bf A}(z,n)\left[1+O(\frac 1 n)\right]
  +{\bf B}(z,n)\left[1+O(\frac 1 n)\right]\bigg\},
\end{eqnarray}
where
\begin{eqnarray*}
  {\bf A}(z,n)
:=\frac{(\frac{\sqrt{z-a}+\sqrt{z-b}}2)^\b+(\frac{\sqrt{z-a}-\sqrt{z-b}}2)^\b}{z^{(\b-1)/2}(z-a)^{1/4}(z-b)^{1/4} F(z)^{-1/4}}\Ai(F(z)),
\end{eqnarray*}
and
\begin{eqnarray*}
  {\bf B}(z,n)
:=-\frac{(\frac{\sqrt{z-a}+\sqrt{z-b}}2)^\b-(\frac{\sqrt{z-a}-\sqrt{z-b}}2)^\b}{z^{(\b-1)/2}(z-a)^{1/4}(z-b)^{1/4} F(z)^{1/4}}\Ai'(F(z)).
\end{eqnarray*}

\item
Let $z=\frac{b-a}2 \cos u+\frac{b+a}2=-\frac{b-a}2 \cos\wt u+\frac{b+a}2$. We have
\begin{eqnarray}\label{O2}
  \pi_n(nz-\b/2)&=& 2(-n)^n e^{nv(z)/2+nl/2}\frac {z^{(1-\b)/2}(\frac{b-a}4)^{\b/2}} {(z-a)^{1/4}(b-z)^{1/4}}
  \nonumber\\
  &&\times\bigg\{\cos(n\pi z-\b\pi/2+\pi/4+\b\wt u/2\mp in\wt\phi(z)) \left[1+O(\frac 1 n)\right]
    \nonumber\\&&+O(n^{-1}e^{n|\re\wt\phi(z)|+n\pi|\im z|})\bigg\}
\end{eqnarray}
for $z\in \Omega^2_\pm$, and
\begin{eqnarray}\label{O3}
  \pi_n(nz-\b/2)&=& 2n^n e^{nv(z)/2+nl/2}\frac {z^{(1-\b)/2}(\frac{b-a}4)^{\b/2}} {(z-a)^{1/4}(b-z)^{1/4}}
  \nonumber\\
  &&\times\bigg\{\cos(\pi/4-\b u/2\mp in\phi(z))\left[1+O(\frac 1 n)\right]
  \nonumber\\&&
  +O(n^{-1}e^{n|\re\phi(z)|})\bigg\}
\end{eqnarray}
for $z\in \Omega^3_\pm$.
In view of (\ref{phi'}) and the fact that $\wt u+u=\pi$,
the asymptotic formulas (\ref{O2}) and (\ref{O3}) are exactly the same.
\end{enumerate}

\begin{figure}[htp]
\centering
\includegraphics{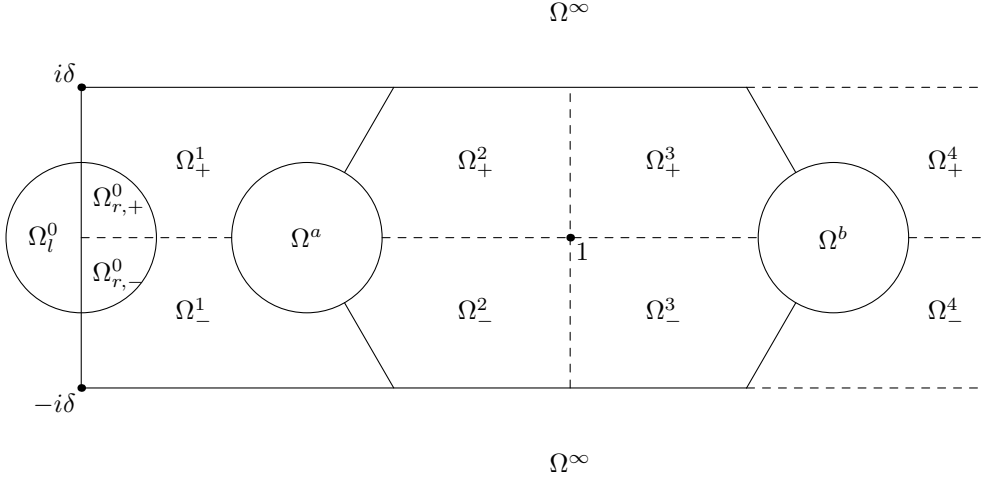}
\caption{Regions of asymptotic approximations.
A dashed line indicates that the
\newline \leftline{\hspace{1.7cm}
 asymptotic formulas on its two sides are the same.}
 }\label{fig-rop}
\end{figure}

\end{thm}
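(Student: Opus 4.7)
The plan is to reverse the chain of transformations $P\to Q\to R\to S\to T$, combine the result with Proposition~\ref{prop-K}, which says $K(z)=I+O(1/n)$, and read off the leading term from $T_{par}(z)$ in each region. From (\ref{P})--(\ref{Q}) we have
\begin{eqnarray*}
\pi_n(nz-\b/2)=n^nP_{11}(nz-\b/2)=n^nU_{11}(z)=n^n Q_{11}(z)\prod_{j=0}^{n-1}(z-X_j).
\end{eqnarray*}
In each sub-region, (\ref{R}) writes $Q_{11}$ as a linear combination of $R_{11}$ and $R_{12}$ with triangular factors $a_{12}^{(\pm)},a_{21}^{(\pm)}$, then (\ref{S}) conjugates by $e^{(G-nl/2)\sigma_3}$, and finally (\ref{T}) expresses $(S_{11},S_{12})$ in terms of $(T_{11},T_{12})$ with explicit factors built from $E,\wt E,H,\wt H,e^{n\phi},e^{n\wt\phi}$. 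Substituting $T(z)=n^{\b\s_3/2}K(z)n^{-\b\s_3/2}T_{par}(z)$ from (\ref{K}), the relative error $O(1/n)$ is absorbed into the brackets $[1+O(1/n)]$ of the theorem.

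For $z\in\Omega^4\cup\Omega^\infty$ we have $T_{par}=M$ by (\ref{Tp}); since $E\sim1$ by Lemma~\ref{lem-E} and $\wt G=O(1/n)$ by Lemma~\ref{lem-G'}, reading off $M_{11}$ from (\ref{M}) and converting the product $\prod(z-X_j)$ via the identity $\exp\{n\int_0^1\log(z-x)dx\}\cdot E(z)\cdot(z/(z-1))^{(1-\b)/2}$ gives (\ref{O4}) once $e^{ng(z)}$ is factored out. The formulas (\ref{O0l}) and (\ref{O0r}) follow identically, with the extra scalar factor $D(z)$ supplied by (\ref{Tp0}).

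The band regions $\Omega^2_\pm$ and $\Omega^3_\pm$ still use $T_{par}=M$, but now $(z-a)^{1/4}(z-b)^{1/4}$ and $\phi_\pm$ are purely imaginary, so both entries $M_{11}$ and $M_{12}$ contribute at the same order. Parametrising $z=\tfrac{b-a}{2}\cos u+\tfrac{b+a}{2}$ one computes $(\tfrac12(\sqrt{z-a}\pm\sqrt{z-b}))^\b=(\tfrac{b-a}{4})^{\b/2}e^{\pm i\b u/2}$, and the combination of the two Airy-free contributions collapses to the cosine $\cos(\pi/4-\b u/2\mp in\phi)$ of (\ref{O3}); the alternate form (\ref{O2}) follows by applying the band identity $\wt\phi=\phi\pm i\pi(1-z)$ and $\wt u+u=\pi$ from (\ref{phi'}). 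In the strip regions $\Omega^1_\pm$ the additional upper-triangular $S\to T$ factor of (\ref{T}) forces the ratio $E/\wt E=1-e^{\pm 2i\pi(nz-\b/2)}$ of (\ref{E/E'}) to appear, producing the $\sin(n\pi z-\b\pi/2)$ of (\ref{O1}); the $O(n^\b e^{n\re\phi})$ remainder is the $T_{12}$-contribution controlled by (\ref{H-asymp1}).

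The main obstacle will be the endpoint regions $\Omega^a$ and $\Omega^b$, where $T_{par}$ is built from the Airy parametrix (\ref{Tpa})--(\ref{Tpb}). Here I would substitute the case definitions (\ref{A1})--(\ref{A4}) of $A(\cdot)$, absorb the $F^{\pm\s_3/4}$ factors into $m$ or $\wt m$ (which is analytic near $b$ or $a$ by Remark~\ref{rem-SoT}), and use the identities $\omega\Ai(\omega z)+\omega^2\Ai(\omega^2z)=-\Ai(z)$ and $\Bi(z)=i\omega\Ai(\omega z)-i\omega^2\Ai(\omega^2z)$ to collapse rotated Airy functions into $\Ai,\Bi$ and their derivatives. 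Near $b$ the factor $\wt E/E$ is asymptotically $1$ (Lemma~\ref{lem-E}) so only $\Ai$ and $\Ai'$ appear, yielding (\ref{Ob}); near $a$ the ratio $E/\wt E=1-e^{\pm 2i\pi(nz-\b/2)}$ injects the $\cos(n\pi z-\b\pi/2)$ and $\sin(n\pi z-\b\pi/2)$ coefficients of $\wt{\bf A}$ and $\wt{\bf B}$ in (\ref{Oa}). The real bookkeeping difficulty is matching the algebraic prefactors $(z-a)^{-1/4}(z-b)^{-1/4}\cdot F^{\pm 1/4}$ produced by $m F^{\s_3/4}$ against the $\tfrac12(\sqrt{z-a}\pm\sqrt{z-b})^\b$ factors in $m$, and keeping track of the conjugation by $H^{\s_3/2}$ so that the $n^\b$ powers cancel. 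After these substitutions, the $O(1/n)$ bound on $K$ from Proposition~\ref{prop-K} and the $O(|z|^{-3/2})$ subdominant term in (\ref{A-asymp}) together yield the uniform error estimates claimed in the theorem.
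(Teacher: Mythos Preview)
Your proposal is correct and follows essentially the same route as the paper: unwind $P\to Q\to R\to S\to T$, insert $T=n^{\b\s_3/2}K\,n^{-\b\s_3/2}T_{par}$ with $K=I+O(1/n)$, and read off $\pi_n$ from the $(1,1)$ entry region by region, using $M$ away from the turning points and the Airy parametrix near $a,b$. One small inaccuracy: your explanation that near $b$ ``$\wt E/E\sim 1$ so only $\Ai,\Ai'$ appear'' is not the right mechanism, since $\Omega^b$ meets the real axis where that asymptotic fails; in the paper the absence of $\Bi$ near $b$ comes instead from the algebraic structure of the right-multiplication matrices in (\ref{thm-b1})--(\ref{thm-b22}), which collapse to a column independent of $\Bi$, whereas near $a$ the factor $E/\wt E=1-e^{\pm 2i\pi(nz-\b/2)}$ enters through (\ref{thm-a3}) and produces the $\cos,\sin$ coefficients of $\Ai,\Bi$.
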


\begin{proof}
  Applying (\ref{Q}), (\ref{W}) and (\ref{E/E'}) to (\ref{R}), we obtain
  \begin{subequations}\label{thm-R}
    \begin{equation}\label{thm-R1}
      U(z)=R(z)\left[\prod\limits_{j=0}^{n-1}(z-X_j)\right]^{\s_3}\left(\begin{matrix}
    1&0\\
    \\
    \cfrac{\mp Ee^{nv}}{W\wt E}&1
  \end{matrix}\right)
    \end{equation}
    for $\re z\in(0,1)$ and $\im z\in(0,\pm\d)$, and
    \begin{equation}\label{thm-R2}
      U(z)=R(z)\left[\prod\limits_{j=0}^{n-1}(z-X_j)\right]^{\s_3}\left(\begin{matrix}
    1&\cfrac{\mp W\wt Ee^{-nv}}{Ee^{\mp 2i\pi (nz-\b/2)}}\\
    \\
    0&1
  \end{matrix}\right)
    \end{equation}
    for $\re z\in(1,\infty)$ and $\im z\in(0,\pm\d)$, and
    \begin{equation}\label{thm-R3}
      U(z)=R(z)\left[\prod\limits_{j=0}^{n-1}(z-X_j)\right]^{\s_3}
    \end{equation}
    for $\re z\notin[0,\infty)$ or $\im z\notin[-\d,\d]$.
  \end{subequations}
  It is easily seen from (\ref{E}) and (\ref{G}) that
  \begin{eqnarray}\label{thm-1}
  \prod\limits_{j=0}^{n-1}(z-X_j)=\left(\frac z{z-1}\right)^{\frac{1-\b}2}E(z)e^{ng(z)-G(z)}.
  \end{eqnarray}
  For the sake of convenience, we put
  \begin{eqnarray}\label{U'}
    \wt U(z):=e^{(-nl/2)\s_3}U(z)e^{(-nv(z)/2)\s_3}.
  \end{eqnarray}
  Thus, we have from (\ref{P}) and (\ref{U}) that
  \begin{eqnarray}\label{p2U'}
    \wt U_{11}(z)=n^{-n}e^{-nv(z)/2-nl/2}\pi_n(nz-\b/2).
  \end{eqnarray}
  A combination of (\ref{gphi}), (\ref{H}), (\ref{S}) and (\ref{thm-R})-(\ref{U'}) yields
  \begin{subequations}\label{thm-S}
    \begin{equation}\label{thm-S1}
      \wt U(z)=S(z)E^{\s_3}e^{-n\phi\s_3}
    \left(\begin{matrix}
    1&0\\
    \\
    \cfrac{\mp E}{H\wt E}&1
  \end{matrix}\right)\left(\cfrac z{z-1}\right)^{\frac{1-\b}2\s_3}
    \end{equation}
    for $\re z\in(0,1)$ and $\im z\in(0,\pm\d)$, and
    \begin{equation}\label{thm-S2}
      \wt U(z)=S(z)E^{\s_3}e^{-n\phi\s_3}
  \left(\begin{matrix}
    1&\cfrac{\mp H\wt E}{Ee^{\mp 2i\pi (nz-\b/2)}}\\
    \\
    0&1
  \end{matrix}\right)\left(\cfrac z{z-1}\right)^{\frac{1-\b}2\s_3}
    \end{equation}
    for $\re z\in(1,\infty)$ and $\im z\in(0,\pm\d)$, and
    \begin{equation}\label{thm-S3}
      \wt U(z)=S(z)E^{\s_3}e^{-n\phi\s_3}\left(\cfrac z{z-1}\right)^{\frac{1-\b}2\s_3}
    \end{equation}
    for $\re z\notin[0,\infty)$ or $\im z\notin[-\d,\d]$.
  \end{subequations}

  For $z\in\Omega^4\cup\Omega^\infty$, we apply (\ref{T}), (\ref{Tp}) and (\ref{K}) to (\ref{thm-S}), and obtain
  \begin{eqnarray}\label{thm-41}
  e^{-L\s_3/2}\wt Ue^{L\s_3/2}&=&(e^{-L\s_3/2}n^{\b\s_3/2}Kn^{-\b\s_3/2}e^{L\s_3/2})(e^{-L\s_3/2}Me^{L\s_3/2})
  \nonumber\\
  &&\times
  e^{-n\phi\s_3}\left(\begin{matrix}
    1&*\\
    0&1
  \end{matrix}\right)\left(\frac z{z-1}\right)^{\frac{1-\b}2\s_3};
  \end{eqnarray}
  here and below, we denote by $*$ some irrelevant quantity which does not effect our final result.
  From (\ref{L-asymp}) and Proposition \ref{prop-K}, we see
  \begin{eqnarray}\label{thm-KL}
  e^{-L\s_3/2}n^{\b\s_3/2}Kn^{-\b\s_3/2}e^{L\s_3/2}=I+O(\frac 1 n),\indent n\to\infty.
  \end{eqnarray}
  Therefore, applying (\ref{G'-asymp}) and (\ref{M}) to (\ref{thm-41}), we have
  $$
  \wt U_{11}=e^{-n\phi}\cfrac{z^{\frac{1-\b}2}(\frac{\sqrt{z-a}+\sqrt{z-b}}2)^\b}{(z-a)^{1/4}(z-b)^{1/4}}\left[1+O(\frac 1 n)\right].
  $$
  Thus, formula (\ref{O4}) follows from (\ref{gphi}) and (\ref{p2U'}).

  For $z\in\Omega_\pm^1$, we apply (\ref{T}), (\ref{Tp}) and (\ref{K}) to (\ref{thm-S}). The result is
  \begin{eqnarray}\label{thm-11}
  e^{-L\s_3/2}\wt Ue^{L\s_3/2}&=&
  (e^{-L\s_3/2}n^{\b\s_3/2}Kn^{-\b\s_3/2}e^{L\s_3/2})
  (e^{-L\s_3/2}Me^{L\s_3/2})e^{-n\phi\s_3}\nonumber\\
  &&\times\left(\begin{matrix}
    E/\wt E&0\\
    \\
    \mp e^L/H&\wt E/E
  \end{matrix}\right)\left(\frac z{z-1}\right)^{\frac{1-\b}2\s_3}.
  \end{eqnarray}
  Recall from (\ref{W}) and (\ref{H}) that $H(z)=[z/(z-1)]^{1-\b}W(z)$ and
  $$W(z)=\frac{2ni\pi \G(nz+\b/2)c^{-\b/2}}{\G(nz+1-\b/2)}.$$
  We observe by Stirling's formula that,
  the function $H(z)^{-1}$ is uniformly bounded for $\re z\ge0$, as $n\to\infty$.
  From (\ref{L-asymp}), it follows that
  $|e^L/H(z)|=O(n^\b)$.
  Therefore, applying (\ref{G'-asymp}), (\ref{M}) and (\ref{thm-KL}) to (\ref{thm-11}) gives
  \begin{eqnarray}\label{thm-12}
  \wt U_{11}=\cfrac{z^{\frac{1-\b}2}(\frac{\sqrt{b-z}+\sqrt{a-z}}2)^\b}{(b-z)^{1/4}(a-z)^{1/4}}\left\{
  (E/\wt E)e^{-n\phi\mp i\pi(1-\b)/2}\left[1+O(\frac 1 n)\right]+O(n^\b e^{n\re\phi})\right\}.
  \end{eqnarray}
  Since
  $$(E/\wt E)e^{-n\phi\mp i\pi(1-\b)/2}=-2(-1)^ne^{-n\wt\phi}\sin(n\pi z-\b\pi/2)$$
  by (\ref{phi'}) and (\ref{E/E'}),
  the asymptotic formula (\ref{O1}) follows from (\ref{p2U'}) and (\ref{thm-12}).

  For $z\in\Omega^0$, the proof of (\ref{O0l}) and (\ref{O0r}) is similar to
  that of (\ref{O4}) and (\ref{O1}).
  The only difference comes from the definition of the parametrix $T_{par}(z)$ in (\ref{Tp}) and (\ref{Tp0}).
  We thus replace $M$ by $MD^{\s_3}$ in (\ref{thm-41}) and (\ref{thm-11});
  consequently, the asymptotic formulas (\ref{O0l}) and (\ref{O0r}) are simply the formulas (\ref{O4}) and (\ref{O1})
  multiplied by the function $D(z)$.

  For $z\in\Omega^a$, we first consider the case $\arg \wt F(z)\in(\mp2\pi/3,\mp\pi)$.
  In view of (\ref{F'-asymp}), this region is approximately the same as the region $\arg (z-a)\in(0,\pm\pi/3)$.
  Hence, we obtain from (\ref{T}) and Remark \ref{rem-SoT} that
  $$T(z)=S(z)\left(\begin{matrix}
     \wt E &{\mp\wt H \wt E}/{e^{2n\wt\phi}}\\
     \\
    0&1/\wt E
  \end{matrix}\right).$$
  Applying this and (\ref{K}) to (\ref{thm-S}) gives
  \begin{eqnarray}\label{thm-a1}
  \wt H^{-\s_3/2}\wt U\wt H^{\s_3/2}
  &=&
  (\wt H^{-\s_3/2}n^{\b\s_3/2}Kn^{-\b\s_3/2}\wt H^{\s_3/2})
  (\wt H^{-\s_3/2}T_{par}\wt H^{\s_3/2}e^{-n\wt\phi\s_3})
  \nonumber\\
  &&\times e^{n\wt\phi\s_3}\wt H^{-\s_3/2}
  \left(\begin{matrix}
     1/\wt E &\cfrac{\pm\wt H \wt E}{e^{2n\wt\phi}}\\
     \\
    0&\wt E
  \end{matrix}\right)e^{-n\phi\s_3}\left(\begin{matrix}
    E&0\\
    \\
    \cfrac{\mp1}{H\wt E}&1/E
  \end{matrix}\right)\wt H^{\s_3/2}
  \nonumber\\  &&\times
  \left(\frac z{z-1}\right)^{\frac{1-\b}2\s_3}.
  \end{eqnarray}
  Coupling (\ref{A}) and (\ref{Tpa}) yields
  \begin{eqnarray}\label{thm-a21}
  &&\wt H^{-\s_3/2}T_{par}\wt H^{\s_3/2}e^{-n\wt\phi\s_3}
  \nonumber\\
  &=&\sqrt\pi\wt m \wt F^{-\s_3/4}\left(\begin{array}{cc}
  -i\omega^2\Ai'(\omega\wt F)
  &-i\omega \Ai'(\omega^2\wt F)\\
\\
 -\omega\Ai(\omega\wt F)&
   -\omega^2\Ai(\omega^2\wt F)
  \end{array}\right)
  \nonumber\\
  &=&\sqrt\pi\wt m \wt F^{-\s_3/4}\left(\begin{array}{cc}
  \Ai'(\wt F)&-\Bi'(\wt F)\\
\\
  -i\Ai(\wt F)& i\Bi(\wt F)
  \end{array}\right)\left(\begin{array}{cc}
  i/2&i/2
\\
  -1/2&1/2
  \end{array}\right)
  \end{eqnarray}
  for $\arg \wt F(z)\in(-2\pi/3,-\pi)$, and
  \begin{eqnarray}\label{thm-a22}
  &&\wt H^{-\s_3/2}T_{par}\wt H^{\s_3/2}e^{-n\wt\phi\s_3}
  \nonumber\\
  &=&\sqrt\pi\wt m \wt F^{-\s_3/4}\left(\begin{array}{cc}
  i\omega \Ai'(\omega^2\wt F)
  &-i\omega^2\Ai'(\omega\wt F)\\
\\
  \omega^2\Ai(\omega^2\wt F)
  &-\omega\Ai(\omega\wt F)
  \end{array}\right)
  \nonumber\\
  &=&\sqrt\pi\wt m \wt F^{-\s_3/4}\left(\begin{array}{cc}
  \Ai'(\wt F)&-\Bi'(\wt F)\\
\\
  -i\Ai(\wt F)&i\Bi(\wt F)
  \end{array}\right)\left(\begin{array}{cc}
  -i/2&i/2
\\
  -1/2&-1/2
  \end{array}\right)
  \end{eqnarray}
  for $\arg \wt F(z)\in(2\pi/3,\pi)$.
  Here, we have made use of the identities
\begin{eqnarray}\label{Bi}
  2\omega\Ai(\omega z)=-\Ai(z)+i\Bi(z),\indent 2\omega^2\Ai(\omega^2 z)=-\Ai(z)-i\Bi(z).
\end{eqnarray}
  A combination of (\ref{phi'}), (\ref{E/E'}) and (\ref{H/H'}) implies
  \begin{eqnarray}\label{thm-a3}
  &&e^{n\wt\phi\s_3}\wt H^{-\s_3/2}\left(\begin{matrix}
     1/\wt E &\cfrac{\pm\wt H \wt E}{e^{2n\wt\phi}}\\
     \\
    0&\wt E
  \end{matrix}\right)e^{-n\phi\s_3}\left(\begin{matrix}
    E&0\\
    \\
    \cfrac{\mp1}{\wt EH}&1/E
  \end{matrix}\right)\wt H^{\s_3/2}\left(\frac z{z-1}\right)^{\frac{1-\b}2\s_3}
  \nonumber\\
  &=&(-1)^n\left(\begin{matrix}
    \mp ie^{\mp i\pi(nz-\b/2)}\ &ie^{\pm i\pi(nz-\b/2)}\wt E/E\\
    \\
    -ie^{\pm i\pi(nz-\b/2)}&\pm ie^{\pm i\pi(nz-\b/2)}\wt E/E
  \end{matrix}\right)\left(\frac z{1-z}\right)^{\frac{1-\b}2 \s_3}
  \nonumber\\
  &=&(-1)^n\left(\begin{array}{cc}
  \mp i&-1
\\
  -i&\pm 1
  \end{array}\right)\left(\begin{matrix}
    \cos(n\pi z-\b\pi/2)\ &*\\
    \\
    \sin(n\pi z-\b\pi/2)\ &*
  \end{matrix}\right)\left(\frac z{1-z}\right)^{\frac{1-\b}2 \s_3},
  \end{eqnarray}
  where the $*$ stands for some irrelevant quantities.
  Applying (\ref{thm-a21})-(\ref{thm-a3}) to (\ref{thm-a1}) gives
  \begin{eqnarray}\label{thm-a4}
  \wt H^{-\s_3/2}\wt U\wt H^{\s_3/2}
  &=&
  (\wt H^{-\s_3/2}n^{\b\s_3/2}Kn^{-\b\s_3/2}\wt H^{\s_3/2})
  (\sqrt\pi\wt m \wt F^{-\s_3/4})(-1)^n
  \nonumber\\
  &&\times
  \left(\begin{matrix}
    \cos(n\pi z-\b\pi/2) \Ai'(\wt F)-\sin(n\pi z-\b\pi/2)\Bi'(\wt F)\ &*\\
    \\
    -i\cos(n\pi z-\b\pi/2) \Ai(\wt F)+i\sin(n\pi z-\b\pi/2)\Bi(\wt F)\ &*
  \end{matrix}\right)
  \nonumber\\  &&\times
  \left(\frac z{1-z}\right)^{\frac{1-\b}2 \s_3}.%
  \end{eqnarray}
  From (\ref{H-asymp1}) and Proposition \ref{prop-K}, we have
  \begin{eqnarray}
    \label{thm-KH'}
    \wt H^{-\s_3/2}n^{\b\s_3/2}Kn^{-\b\s_3/2}\wt H^{\s_3/2}=I+O(1/n),\indent n\to\infty.
  \end{eqnarray}
  Coupling (\ref{thm-a4}) and (\ref{thm-KH'}) yields
  \begin{eqnarray*}
  \wt U_{11}(z)&=&(-1)^n\sqrt\pi\left(\frac z{1-z}\right)^{\frac{1-\b}2}\\
  &&\times \bigg\{-i\wt m_{12}\wt F^{1/4}
  [\cos(n\pi z-\b\pi/2) \Ai(\wt F)-\sin(n\pi z-\b\pi/2)\Bi(\wt F)]\left[1+O(\frac 1 n)\right]\\&&+
  \wt m_{11}\wt F^{-1/4}
  [\cos(n\pi z-\b\pi/2) \Ai'(\wt F)-\sin(n\pi z-\b\pi/2)\Bi'(\wt F)]\left[1+O(\frac 1 n)\right]\bigg\}.
  \end{eqnarray*}
  Thus, formula (\ref{Oa}) follows from (\ref{m'}) and (\ref{p2U'}).

  Now, we consider the case $\arg \wt F(z)\in(0,\mp2\pi/3)$.
  In view of Remark \ref{rem-SoT}, we obtain from (\ref{T}) that $T(z)=S(z)\wt E(z)^{\s_3}$.
  Applying this and (\ref{K}) to (\ref{thm-S}) gives
  \begin{eqnarray}\label{thm-a1'}
  \wt H^{-\s_3/2}\wt U\wt H^{\s_3/2}
  &=&
  (\wt H^{-\s_3/2}n^{\b\s_3/2}Kn^{-\b\s_3/2}\wt H^{\s_3/2})
  (\wt H^{-\s_3/2}T_{par}\wt H^{\s_3/2}e^{-n\wt\phi\s_3})
  \nonumber\\
  &&\times e^{n\wt\phi\s_3}\wt H^{-\s_3/2} e^{-n\phi\s_3}\left(\begin{matrix}
    E/\wt E&0\\
    \\
    \mp1/H&\wt E/E
  \end{matrix}\right)\wt H^{\s_3/2}\left(\frac z{z-1}\right)^{\frac{1-\b}2\s_3}.\qquad\qquad
  \end{eqnarray}
  A combination of (\ref{Tpa}), (\ref{A}) and (\ref{Bi}) yields
  \begin{eqnarray}\label{thm-a2'}
  \wt H^{-\s_3/2}T_{par}\wt H^{\s_3/2}e^{-n\wt\phi\s_3}
  =\sqrt\pi\wt m \wt F^{-\s_3/4}\left(\begin{array}{cc}
  \Ai'(\wt F)&-\Bi'(\wt F)\\
\\
  -i\Ai(\wt F)& i\Bi(\wt F)
  \end{array}\right)\left(\begin{array}{cc}
  \pm i/2&i
\\
  -1/2&0
  \end{array}\right).
  \end{eqnarray}
  Coupling (\ref{phi'}), (\ref{E/E'}) and (\ref{H/H'}) implies
  \begin{eqnarray}\label{thm-a3'}
  &&e^{n\wt\phi\s_3}\wt H^{-\s_3/2}e^{-n\phi\s_3}\left(\begin{matrix}
    E/\wt E&0\\
    \\
    \mp1/H&\wt E/E
  \end{matrix}\right)\wt H^{\s_3/2}\left(\frac z{z-1}\right)^{\frac{1-\b}2\s_3}\nonumber\\
  &=&(-1)^n\left(\begin{matrix}
    - 2\sin(n\pi z-\b\pi/2)&0\\
    \\
    -ie^{\pm i\pi(nz-\b/2)}\ &\pm ie^{\pm i\pi(nz-\b/2)}\wt E/E
  \end{matrix}\right)\left(\frac z{1-z}\right)^{\frac{1-\b}2 \s_3}
  \nonumber\\
  &=&(-1)^n\left(\begin{array}{cc}
  0&-2
\\
  -i&\pm 1
  \end{array}\right)\left(\begin{matrix}
    \cos(n\pi z-\b\pi/2)\ &*\\
    \\
    \sin(n\pi z-\b\pi/2)&0
  \end{matrix}\right)\left(\frac z{1-z}\right)^{\frac{1-\b}2 \s_3},
  \end{eqnarray}
  where the $*$ stands for some irrelevant quantity.
  Applying (\ref{thm-a2'}) and (\ref{thm-a3'}) to (\ref{thm-a1'}), we again obtain (\ref{thm-a4}).
  A combination of (\ref{m'}), (\ref{p2U'}), (\ref{thm-a4}) and (\ref{thm-KH'}) yields (\ref{Oa}).

  For $z\in\Omega^b$, we only consider the case $\arg F(z)\in(\pm2\pi/3,\pm\pi)$ here.
  The case $\arg F(z)\in(0,\pm2\pi/3)$ is much simpler and we omit the details.
  On account of Remark \ref{rem-SoT}, we obtain from (\ref{T}) that
  $$T(z)=S(z)\left(\begin{matrix}
     E &0\\
     \\
    {e^{2n\phi}}/({\pm HE})&1/  E
  \end{matrix}\right).$$
  Applying this and (\ref{K}) to (\ref{thm-S}) gives
  \begin{eqnarray}\label{thm-b1}
   H^{-\s_3/2}\wt U H^{\s_3/2}
   &=&
  ( H^{-\s_3/2}n^{\b\s_3/2}Kn^{-\b\s_3/2} H^{\s_3/2})
  ( H^{-\s_3/2}T_{par} H^{\s_3/2}e^{-n\phi\s_3})
  \nonumber\\
  &&\times e^{n\phi\s_3} H^{-\s_3/2}\left(\begin{matrix}
     1/ E &0\\
     \\
    \cfrac{e^{2n\phi}}{\mp HE}& E
  \end{matrix}\right)e^{-n\phi\s_3}\left(\begin{matrix}
    E&\cfrac{\mp H\wt E}{e^{\mp 2i\pi (nz-\b/2)}}\\
    \\
    0&1/E
  \end{matrix}\right) H^{\s_3/2}\nonumber\\
  &&\times\left(\frac z{z-1}\right)^{\frac{1-\b}2\s_3}\nonumber\\
  &=&
  ( H^{-\s_3/2}n^{\b\s_3/2}Kn^{-\b\s_3/2} H^{\s_3/2})
  ( H^{-\s_3/2}T_{par} H^{\s_3/2}e^{-n\phi\s_3})\nonumber\\
  &&\times\left(\begin{matrix}
    1&*\\
    \mp 1&*
  \end{matrix}\right)
  \left(\frac z{z-1}\right)^{\frac{1-\b}2\s_3},
  \end{eqnarray}
  where the $*$ again stands for some irrelevant quantities.
  Coupling (\ref{Tpb}) and (\ref{A}) yields
  \begin{eqnarray}\label{thm-b21}
   &&
   H^{-\s_3/2}T_{par} H^{\s_3/2}e^{-n\phi\s_3}
   \nonumber\\
  &=&\sqrt\pi m  F^{\s_3/4}\left(\begin{array}{cc}
  -\omega\Ai(\omega F)&
  \omega^2\Ai(\omega^2 F)\\
\\
  -i\omega^2\Ai'(\omega F)
  &i\omega \Ai'(\omega^2 F)
  \end{array}\right)
  \nonumber\\
  &=&\sqrt\pi m  F^{\s_3/4}\left(\begin{array}{cc}
  \Ai( F)& \Bi( F)\\
\\
  i\Ai'( F)&i\Bi'( F)
  \end{array}\right)\left(\begin{array}{cc}
  1/2&-1/2
\\
  -i/2&-i/2
  \end{array}\right)
  \end{eqnarray}
  for $\arg F(z)\in(2\pi/3,\pi)$, and
  \begin{eqnarray}
  \label{thm-b22}
   &&H^{-\s_3/2}T_{par} H^{\s_3/2}e^{-n\phi\s_3}\nonumber\\
  &=&\sqrt\pi m  F^{\s_3/4}\left(\begin{array}{cc}
  -\omega^2\Ai(\omega^2 F)
  &-\omega\Ai(\omega F)\\
\\
  -i\omega \Ai'(\omega^2 F)
  &-i\omega^2\Ai'(\omega F)
  \end{array}\right)
  \nonumber\\
  &=&\sqrt\pi m  F^{\s_3/4}\left(\begin{array}{cc}
  \Ai( F)& \Bi( F)\\
\\
  i\Ai'( F)&i\Bi'( F)
  \end{array}\right)\left(\begin{array}{cc}
  1/2&1/2
\\
  i/2&-i/2
  \end{array}\right)
  \end{eqnarray}
  for $\arg F(z)\in(-2\pi/3,-\pi)$. Here, we have made use of (\ref{Bi}).
  Moreover, from (\ref{H-asymp1}) and Proposition \ref{prop-K}, we obtain
  \begin{eqnarray}
    \label{thm-KH}
    H^{-\s_3/2}n^{\b\s_3/2}Kn^{-\b\s_3/2} H^{\s_3/2}=I+O(1/n),\indent n\to\infty.
  \end{eqnarray}
  A combination of (\ref{thm-b1})-(\ref{thm-KH}) gives
  \begin{eqnarray*}
  \wt U_{11}(z)=\sqrt\pi\left(\frac z{z-1}\right)^{\frac{1-\b}2}\bigg\{m_{11}F^{1/4}\Ai(F)\left[1+O(\frac 1 n)\right]+i
  m_{12}F^{-1/4}\Ai'(F)\left[1+O(\frac 1 n)\right]\bigg\}.
  \end{eqnarray*}
  Thus, formula (\ref{Ob}) follows from (\ref{m}) and (\ref{p2U'}).

  For $z\in\Omega_\pm^2$,
  similar to the proof of (\ref{Oa}) in the case $\arg\wt F(z)\in(\mp2\pi/3,\mp\pi)$,
  we obtain (\ref{thm-a1}) from (\ref{T}), (\ref{K}) and (\ref{thm-S}).
  Also, equality (\ref{thm-a3}) follows from a combination of (\ref{phi'}), (\ref{E/E'}) and (\ref{H/H'}).
  Setting $z=-\frac{b-a}2\cos\wt u+\frac{b+a}2$; we have $\sqrt{b-z}\pm i\sqrt{z-a}=\sqrt{b-a}e^{\pm i\wt  u/2}$.
  Since $\wt H(z)=(1-z)^{\b-1}e^{-V(z)}$ by (\ref{HV}),
  and $|\wt G(z)|+|V(z)+L|=O(1/n)$ by (\ref{V-asymp}), (\ref{G'-asymp}) and (\ref{L-asymp}),
  we obtain from (\ref{M}) and (\ref{Tp}) that
  \begin{eqnarray}\label{thm-21}
  \wt H^{-\s_3/2}T_{par}\wt H^{\s_3/2}e^{-n\wt\phi\s_3}
  &=&\cfrac {(1-z)^{\frac{1-\b}2\s_3}(\frac{b-a}4)^{\b/2}} {(b-z)^{1/4}(z-a)^{1/4}}
  \nonumber\\&&\times\left[
  \left(\begin{matrix}
    e^{\mp i\b\wt u/2\pm i \pi/4}\quad{}&i e^{\pm i\b\wt u/2\pm i \pi/4}\\
    \\
    -i e^{\pm i\b\wt u/2\pm i \pi/4}\quad{}&e^{\mp i\b\wt u/2\pm i \pi/4}
  \end{matrix}\right)+O(\frac 1 n)\right]
  \nonumber\\&&\times e^{-n\wt\phi\s_3}.
  \end{eqnarray}
  Since
  \begin{eqnarray*}
  &&\left(\begin{matrix}
    e^{\mp i\b\wt u/2\pm i \pi/4}\quad{}&i e^{\pm i\b\wt u/2\pm i \pi/4}\\
    \\
    -i e^{\pm i\b\wt u/2\pm i \pi/4}\quad{}&e^{\mp i\b\wt u/2\pm i \pi/4}
  \end{matrix}\right)e^{-n\wt\phi\s_3}
  \\&=&
  \left(\begin{array}{cc}
  2\cos(\pi/4+\b\wt u/2\mp in\wt\phi)\ &-2\sin(\pi/4+\b\wt u/2\mp in\wt\phi)\\
\\
  O(e^{n|\re\wt\phi|})&O(e^{n|\re\wt\phi|})
  \end{array}\right)\left(\begin{array}{cc}
  \pm i/2&i/2
\\
  -1/2&\pm1/2
  \end{array}\right),
  \end{eqnarray*}
  it follows from (\ref{thm-a1}), (\ref{thm-a3}) and (\ref{thm-21}) that
  \begin{eqnarray}\label{thm-22}
  \wt H^{-\s_3/2}\wt U\wt H^{\s_3/2}
  &=&
  \bigg(\wt H^{-\s_3/2}n^{\b\s_3/2}Kn^{-\b\s_3/2}\wt H^{\s_3/2}\bigg)
  \frac {(1-z)^{\frac{1-\b}2\s_3}(\frac{b-a}4)^{\b/2}} {(b-z)^{1/4}(z-a)^{1/4}}
  \nonumber\\&&\times
  \left(\begin{array}{cc}
  \wt r_{11}&*\\
  \\
  O(e^{n|\re\wt\phi|+n\pi|\im z|})\ &*
  \end{array}\right)
  \left(\frac z{1-z}\right)^{\frac{1-\b}2 \s_3},
  \end{eqnarray}
  where the $*$ stands for some irrelevant quantities, and
  $$\wt r_{11}=\cos(n\pi z-\b\pi/2+\pi/4+\b\wt u/2\mp in\wt\phi)\left[1+O(\frac 1 n)\right]
    +O(n^{-1}e^{n|\re\wt\phi|+n\pi|\im z|}).$$
  For $z\in\Omega_\pm^2$, we have from (\ref{HV}) and (\ref{V-asymp}) that
  $$|n^{-\b}\wt H(z)(1-z)^{1-\b}|+|(1-z)^{\b-1}\wt H(z)^{-1}n^\b|=O(1)$$
  as $n\to\infty$.
  In view of $K(z)=I+O(1/n)$ by Proposition \ref{prop-K}, we obtain from (\ref{thm-22})
  $$\wt U_{11}(z)=\frac{2(-1)^nz^{\frac{1-\b}2}(\frac{b-a}4)^{\b/2}\wt r_{11}} {(b-z)^{1/4}(z-a)^{1/4}}.$$
  Coupling this with (\ref{p2U'}) yields (\ref{O2}).

  For $z\in\Omega_\pm^3$,
  similar to the proof of (\ref{Ob}) in the case $\arg F(z)\in(\pm2\pi/3,\pm\pi)$,
  we obtain (\ref{thm-b1}) from (\ref{T}), (\ref{K}) and (\ref{thm-S}).
  Set $z=\frac{b-a}2\cos u+\frac{b+a}2$, and we have $\sqrt{z-a}\pm i\sqrt{b-z}=\sqrt{b-a}e^{\pm iu/2}$.
  Since $H(z)=(z-1)^{\b-1}e^{-V(z)}$ by (\ref{HV}),
  and $|\wt G(z)|+|V(z)+L|=O(1/n)$ by (\ref{V-asymp}), (\ref{G'-asymp}) and (\ref{L-asymp}),
  we obtain from (\ref{M}) and (\ref{Tp}) that
  \begin{eqnarray}\label{thm-31}
  H^{-\s_3/2}T_{par} H^{\s_3/2}e^{-n\phi\s_3}
  &=&\frac {(z-1)^{\frac{1-\b}2\s_3}(\frac{b-a}4)^{\b/2}} {(b-z)^{1/4}(z-a)^{1/4}}
  \nonumber\\&&\times \left[\left(\begin{matrix}
    e^{\pm i\b u/2\mp i \pi/4}\quad{}&-i e^{\mp i\b u/2\mp i \pi/4}\\
    \\
    i e^{\mp i\b u/2\mp i \pi/4}\quad{}&e^{\pm i\b u/2\mp i \pi/4}
  \end{matrix}\right)+O(\frac 1 n)\right]
  \nonumber\\&&\times e^{-n\phi\s_3}.
  \end{eqnarray}
  Since
  \begin{eqnarray*}
  &&\left(\begin{matrix}
    e^{\pm i\b u/2\mp i \pi/4}\quad{}&-i e^{\mp i\b u/2\mp i \pi/4}\\
    \\
    i e^{\mp i\b u/2\mp i \pi/4}\quad{}&e^{\pm i\b u/2\mp i \pi/4}
  \end{matrix}\right)
  e^{-n\phi\s_3}
  \\&=&
  \left(\begin{array}{cc}
  2\cos(\pi/4-\b u/2\mp in\phi)\quad{}&-ie^{\mp i\b u\mp i\pi/4+n\phi}\\
\\
  O(e^{n|\re\phi|})\quad{}&O(e^{n|\re\phi|})
  \end{array}\right)
  \left(\begin{array}{cc}
  1\quad{}&0
\\ \\
  \pm1\quad{}&1
  \end{array}\right),
  \end{eqnarray*}
  applying (\ref{thm-31}) to (\ref{thm-b1}) gives
  \begin{eqnarray}\label{thm-32}
  H^{-\s_3/2}\wt U H^{\s_3/2}
   &=&
  \bigg( H^{-\s_3/2}n^{\b\s_3/2}Kn^{-\b\s_3/2} H^{\s_3/2}\bigg)
  \frac {(z-1)^{\frac{1-\b}2\s_3}(\frac{b-a}4)^{\b/2}} {(b-z)^{1/4}(z-a)^{1/4}}
  \nonumber\\&&\times
  \left(\begin{array}{cc}
  r_{11}&*\\
  \\
  O(e^{n|\re\phi|})\ &*
  \end{array}\right)
  \left(\frac z{z-1}\right)^{\frac{1-\b}2 \s_3},
  \end{eqnarray}
  where the $*$ stands for some irrelevant quantities, and
  $$r_{11}=\cos(\pi/4-\b u/2\mp in\phi)\left[1+O(\frac 1 n)\right]
    +O(n^{-1}e^{n|\re\phi|}).$$
  For $z\in\Omega_\pm^3$, we have from (\ref{HV}) and (\ref{V-asymp}) that
  $$|n^{-\b}H(z)(z-1)^{1-\b}|+|(z-1)^{\b-1}H(z)^{-1}n^\b|=O(1)$$
  as $n\to\infty$.
  In view of $K(z)=I+O(1/n)$ by Proposition \ref{prop-K}, we obtain from (\ref{thm-32})
  $$\wt U_{11}(z)=\frac{2z^{\frac{1-\b}2}(\frac{b-a}4)^{\b/2}r_{11}} {(b-z)^{1/4}(z-a)^{1/4}}.$$
  Coupling this with (\ref{p2U'}) yields (\ref{O3}).
  Moreover, since $z=\frac{b-a}2 \cos u+\frac{b+a}2=-\frac{b-a}2 \cos\wt u+\frac{b+a}2$, we have $\wt u+u=\pi$.
  In view of (\ref{phi'}), the two asymptotic formulas (\ref{O2}) and (\ref{O3}) are exactly the same.
\end{proof}

\section{Numerical Evidence}

In this section we provide some numerical computations by using our results in Theorem 1.
Choosing $c=0.5$, it is easily seen from (\ref{ab}) that $a\approx0.17157$ and $b\approx5.82843$.
We also fix $\b=1.5$.
Since the polynomial degree $n$ should be reasonably large, we set $n=100$.
The approximate values of $\pi_n(nz-\b/2)$ are obtained by using the asymptotic formulas given in Theorem 1.
We use formula (\ref{O4}) for $z=-1$ and $z=100$, formula (\ref{O0l})-(\ref{O0r}) for $z=\pm0.001$,
formula (\ref{O1}) for $z=0.05$, formula (\ref{Oa}) for $z=0.171$ and $z=0.172$,
formula (\ref{O2}) or (\ref{O3}) for $z=2$, and formula (\ref{Ob}) for $z=5.828$ and $z=5.829$.
The true values of $\pi_n(nz-\b/2)$ can be obtained from (\ref{Meixner}) and (\ref{monic Meixner}).
The numerical results are presented in Table 1.

\begin{table}[h]
\centering
\begin{tabular}{llrrr}
\hline
&& \multicolumn{1}{c}{True value}  & \multicolumn{1}{c}{Approximate value}\\
\hline
$z=-1$ 
&&
$1.99529\times 10^{233}$ & $1.99473\times 10^{233}$\\
$z=-0.001$ 
&&
$8.36624\times 10^{187}$ & $8.35137\times 10^{187}$\\
$z=0.001$ 
&&
$3.07930\times 10^{187}$ & $3.07272\times 10^{187}$\\
$z=0.05$ 
&&
$-2.51701\times 10^{180}$ & $-2.51507\times 10^{180}$\\
$z=0.171$ 
&&
$-9.12697\times 10^{174}$ & $-9.12530\times 10^{174}$\\
$z=0.172$ 
&&
$-1.22035\times 10^{175}$ & $-1.22003\times 10^{175}$\\
$z=2$ 
&&
$-4.71541\times 10^{201}$ & $-4.70772\times 10^{201}$\\
$z=5.828$ 
&&
$2.78146\times 10^{259}$ & $2.78231\times 10^{259}$\\
$z=5.829$ 
&&
$2.86933\times 10^{259}$ & $2.87018\times 10^{259}$\\
$z=100$ 
&&
$2.16586\times 10^{399}$ & $2.16586\times 10^{399}$\\
\end{tabular}
\caption{ The true values and approximate values of $\pi_n(nz-\b/2)$
for $c=0.5$,
\newline \leftline{\hspace{1.7cm}
$\b=1.5$ and $n=100$. Note that $a\approx0.17157$ and $b\approx5.82843$.}
}
\end{table}

\section{Comparison with Earlier Results}

In this section, we compare our formulas in Theorem 1 with those given in \cite{JW98} and \cite{JW99}.
First, we introduce two notations. Let
\begin{eqnarray}\label{Jin-az}
\a:=z-\b/(2n)
\end{eqnarray}
and
\begin{eqnarray}\label{Jin-mpi}
m_n(n\a;\b,c):=(1-1/c)^n\pi_n(nz-\b/2).
\end{eqnarray}
Two different asymptotic formulas for $m_n(n\a;\b,c)$
are given in \cite[(6.9) and (6.27)]{JW98}; both in terms of parabolic cylinder functions.
To study the large and small zeros of the Meixner polynomials,
these two formulas are transformed to (2.35) and (4.19) in \cite{JW99}.
Here, we intend to show the equivalence between our equation (\ref{Ob}) and equation (2.35) in \cite{JW99},
and also the equivalence between our equation (\ref{O1}) and equation (4.19) in \cite{JW99}.

In view of \cite[(2.34)]{JW99}, we rewrite the formula \cite[(2.35)]{JW99} as follows:
\begin{eqnarray}\label{Jin10}
  m_n(n\a;\b,c)\sim (-1)^n\sqrt{2\pi}n^{n+1/6}e^{n(\g+\eta^2/4-1/2)}
  c^{-1/6}(1+\sqrt c)^{2/3-\b}\Ai(n^{2/3}(\eta-2)),
\end{eqnarray}
where $\g$ is a constant and $\eta$ is a function of $\a$.
The constant $\g$ and the function $\eta$
could be solved from the following two equations (cf. \cite[(3.12)-(3.13)]{JW98}):
\begin{eqnarray}
  \a\log(1-w_+/c)-\a\log(1-w_+)-\log(-w_+)=-\log u_-+\eta u_--u_-^2/2+\g,\label{Jin11}\\
  \a\log(1-w_-/c)-\a\log(1-w_-)-\log(-w_-)=-\log u_++\eta u_+-u_+^2/2+\g.\label{Jin12}
\end{eqnarray}
The saddle points $w_\pm$ and $u_\pm$ are as given below (cf. \cite[(2.5)]{JW98} and \cite[(3.8)]{JW98}):
$$
w_\pm=\frac{1+c+\a c-\a\pm\sqrt{(1+c+\a c-\a)^2-4c}}2,\indent u_\pm=\eta/2\pm\sqrt{\eta^2/4-1};
$$
see also \cite[(2.4)-(2.5)]{JW99}.
Adding (\ref{Jin11}) to (\ref{Jin12}) gives
\begin{eqnarray}\label{Jin13}
  \eta^2/4+\g+1/2&=&-\frac{\a+1}2\log c.
\end{eqnarray}
Subtracting (\ref{Jin11}) from (\ref{Jin12}) yields
\begin{eqnarray}\label{Jin14}
  &&(\eta/2)\sqrt{\eta^2/4-1}+\log(\eta/2-\sqrt{\eta^2/4-1})\nonumber\\
  &=&
  \frac\a2\log\frac{(1-w_-/c)(1-w_+)}{(1-w_+/c)(1-w_-)}+\frac1 2\log\frac{w_+}{w_-}.
\end{eqnarray}
From the definition of $\phi$-function in (\ref{phi}), we have $\phi(b)=0$ and
$$
\phi'(\a)=\log\frac{\a(b+a)-2+2\sqrt{(\a-a)(\a-b)}}{\a(b-a)}=
\frac1 2\log\frac{(1-w_-/c)(1-w_+)}{(1-w_+/c)(1-w_-)}.
$$
Therefore, we obtain from (\ref{Jin14})
$$
  \phi(\a)=(\eta/2)\sqrt{\eta^2/4-1}+\log(\eta/2-\sqrt{\eta^2/4-1})\sim\frac 2 3(\eta-2)^{3/2},
$$
where we have made use of the restriction $\eta-2=O(n^{-2/3})$; see \cite[p.284]{JW99}.
In view of (\ref{F}), we then have
\begin{eqnarray}\label{Jin15}
  F(\a)\sim n^{2/3}(\eta-2).
\end{eqnarray}
A combination of (\ref{Jin-az})-(\ref{Jin10}), (\ref{Jin13}) and (\ref{Jin15}) gives
\begin{eqnarray}\label{Jin18}
  \pi_n(nz-\b/2)\sim\sqrt{2\pi}n^{n+1/6}e^{-n}
  c^{-nz/2+n/2+\b/4-1/6}(1-c)^{-n}(1+\sqrt c)^{2/3-\b}\Ai(F(z)).
\end{eqnarray}
Applying (\ref{prop-phi11}) to (\ref{F}) implies
$$
\frac{F(z)}{z-b}\sim\left(\frac{2n}{b\sqrt{b-a}}\right)^{2/3}=n^{2/3}c^{-1/6}(1-c)(1+\sqrt c)^{-4/3}.
$$
Therefore, we have
\begin{eqnarray}\label{Jin19}
  \frac{(\frac{\sqrt{z-a}+\sqrt{z-b}}2)^\b+(\frac{\sqrt{z-a}-\sqrt{z-b}}2)^\b}
  {z^{(\b-1)/2}(z-a)^{1/4}(z-b)^{1/4} F(z)^{-1/4}}
  \sim\sqrt 2n^{1/6}c^{\b/4-1/6}(1+\sqrt c)^{2/3-\b}.
\end{eqnarray}
Moreover, it is easy to see that
\begin{eqnarray}
  \frac{(\frac{\sqrt{z-a}+\sqrt{z-b}}2)^\b-(\frac{\sqrt{z-a}-\sqrt{z-b}}2)^\b}
  {z^{(\b-1)/2}(z-a)^{1/4}(z-b)^{1/4} F^{1/4}}=O(n^{-1/6}).
\end{eqnarray}
From (\ref{v}) and (\ref{l}), we obtain
\begin{eqnarray}\label{Jin-vl}
  e^{nv/2+nl/2}=e^{-n}c^{-nz/2+n/2}(1-c)^{-n}.
\end{eqnarray}
Hence, we can derive (\ref{Jin18}) again by applying (\ref{Jin19})-(\ref{Jin-vl}) to (\ref{Ob}).
This establishes the equivalence between (\ref{Ob}) and \cite[(2.35)]{JW99}.

Applying \cite[(3.4)]{JW99} and \cite[(3.11)-(3.12)]{JW99} to \cite[(4.19)]{JW99}, we have
\begin{eqnarray}\label{Jin20}
  m_n(n\a;\b,c)&=&\frac{2n^{n\a}n!}{\G(n\a+1)}\a^{n\a+1/2}e^{n(\g+\eta^2/4-\a/2)}
  e^{n[-\a\log(-\eta/2+\sqrt{\eta^2/4-\a})-(\eta/2)\sqrt{\eta^2/4-\a}]}\nonumber\\
  &&\times\frac{-h(u_-)}{(\eta^2-4\a)^{1/4}\sqrt{-u_-}}[1+O(1/n)]\{\sin n\pi\a+O(\a^{-1/2}e^{-2\ve_0n})\}.
\end{eqnarray}
Here again, $\g$ is a constant and $\eta$ is a function of $\a$, and
they can be solved from the two equations (cf. \cite[(3.23)-(3.24)]{JW98}):
\begin{eqnarray}
  \a\log(1-w_+/c)-\a\log(1-w_+)-\log w_+=-\a\log u_++\eta u_+-u_+^2/2+\g,\label{Jin21}\\
  \a\log(1-w_-/c)-\a\log(1-w_-)-\log w_-=-\a\log u_-+\eta u_--u_-^2/2+\g.\label{Jin22}
\end{eqnarray}
The saddle points $w_\pm$ and $u_\pm$ are given by (cf. \cite[(2.5)]{JW98} and \cite[(3.22)]{JW98})
$$
w_\pm=\frac{1+c+\a c-\a\pm\sqrt{(1+c+\a c-\a)^2-4c}}2,\indent u_\pm=\eta/2\pm\sqrt{\eta^2/4-\a};
$$
see also \cite[(3.3)-(3.4)]{JW99}.
Adding (\ref{Jin21}) to (\ref{Jin22}) yields
\begin{eqnarray}\label{Jin23}
  \eta^2/4+\g=-\frac{\a+1}2\log c-\a/2+\frac{\a}2\log \a.
\end{eqnarray}
Subtracting (\ref{Jin21}) from (\ref{Jin22}) gives
\begin{eqnarray}\label{Jin24}
  (-\eta/2)\sqrt{\eta^2/4-\a}-\a\log(-\eta/2+\sqrt{\eta^2/4-\a})+(\a/2)\log\a\nonumber\\
  =
  \frac\a2\log\frac{(w_-/c-1)(1-w_+)}{(w_+/c-1)(1-w_-)}+\frac 1 2\log\frac{w_+}{w_-}.
\end{eqnarray}
Recall the definition of $\wt\phi$-function in (\ref{phi'}). We have $\wt\phi(a)=0$, and
$$
\wt\phi'(\a)=\log\frac{-\a(b+a)+2+2\sqrt{(a-\a)(b-\a)}}{\a(b-a)}
=-\frac1 2\log\frac{(w_-/c-1)(1-w_+)}{(w_+/c-1)(1-w_-)}.
$$
Therefore, we obtain from (\ref{Jin24})
\begin{eqnarray}\label{Jin25}
  (-\eta/2)\sqrt{\eta^2/4-\a}-\a\log(-\eta/2+\sqrt{\eta^2/4-\a})+(\a/2)\log\a
  =-\wt\phi(\a).
\end{eqnarray}
Furthermore, a direct calculation shows that
\begin{eqnarray}\label{Jin26}
  \frac{-h(u_-)}{(\eta^2-4\a)^{1/4}\sqrt{-u_-}}
  =-\sqrt\a[(a-\a)(b-\a)]^{-1/4}(1-w_-)^{-\b}.
\end{eqnarray}
Using the equality
$$
\frac{-\a(b+a)+2+2\sqrt{(a-\a)(b-\a)}}{b-a}=c^{-1/2}(1-w_-)^2(\frac{\sqrt{b-\a}+\sqrt{a-\a}}2)^2
$$
and Taylor's expansion, we have
\begin{eqnarray}\label{Jin27}
  e^{-n\wt\phi(\a)+n\wt\phi(z)}&=&e^{(\b/2)\wt\phi'(\a)+O(1/n)}\nonumber\\
  &=&\left[\frac{-\a(b+a)+2+2\sqrt{(a-\a)(b-\a)}}{\a(b-a)}\right]^{\b/2}[1+O(1/n)]\nonumber\\
  &=&\a^{-\b/2}c^{-\b/4}(1-w_-)^\b\left(\frac{\sqrt{b-\a}+\sqrt{a-\a}}2\right)^\b[1+O(1/n)].
\end{eqnarray}
It can be shown by Stirling's formula that
\begin{eqnarray}\label{Jin28}
  \frac{2n^{n\a}n!}{\G(n\a+1)}\a^{n\a+1/2}e^{-n\a/2}
  =2n^ne^{-n+n\a/2}[1+O(1/n)].
\end{eqnarray}
Applying (\ref{Jin23}) and (\ref{Jin25})-(\ref{Jin28}) to (\ref{Jin20}) gives
\begin{eqnarray}\label{Jin29}
  m_n(n\a;\b,c)&=&-2n^ne^{-n}c^{-n\a/2-\b/4-n/2}e^{-n\wt\phi(z)}
  \frac{\a^{(1-\b)/2}(\frac{\sqrt{b-\a}+\sqrt{a-\a}}2)^\b}{[(a-\a)(b-\a)]^{1/4}}\bigg[1+O(1/n)\bigg]\nonumber\\
  &&\times\{\sin n\pi\a+O(\a^{-1/2}e^{-2\ve_0n})\},
\end{eqnarray}
which is exactly the same as (\ref{O1}) in view of (\ref{Jin-az}), (\ref{Jin-mpi}) and (\ref{Jin-vl}).

\end{document}